\numberwithin{equation}{section}
\newtheorem{theorem}{Theorem}[section]
\newtheorem{lemma}[theorem]{Lemma}
\newtheorem{proposition}[theorem]{Proposition}
\newtheorem{corollary}[theorem]{Corollary}
\theoremstyle{definition}
\newtheorem{definition}[theorem]{Definition}
\newtheorem{remark}[theorem]{Remark}
\newtheorem{notation}[theorem]{Notation}
\newtheorem{condition}[theorem]{Condition}
\numberwithin{equation}{section}
\newcommand{\field}[1]{\mathbb{#1}}
\newcommand{\Z}{\field{Z}}
\newcommand{\R}{\field{R}}
\newcommand{\C}{\field{C}}
\newcommand{\N}{\field{N}}
\newcommand{\cali}[1]{\mathscr{#1}}
\newcommand{\cC}{\cali{C}} 
\newcommand{\cO}{\cali{O}} 
 \newcommand{\cL}{\cali{L}}
\newcommand{\cK}{\cali{K}} \newcommand{\cT}{\cali{T}}
\newcommand{\cF}{\cali{F}}
\newcommand{\calig}[1]{\mathcal{#1}}
 \newcommand\mO{\calig{O}}
\newcommand\mQ{\calig{Q}} 
 \newcommand{\cP}{\cali{P}}
\newcommand\mK{\calig{K}}
\newcommand{\boldsym}[1]{\boldsymbol{#1}}
\newcommand\bb{\boldsym{b}}
\def\Im{{\rm Im}}
\newcommand{\imat}{\sqrt{-1}}
\DeclareMathOperator{\End}{End}
\DeclareMathOperator{\Ker}{Ker}
\DeclareMathOperator{\Dom}{Dom}
\DeclareMathOperator{\rank}{rk}
\DeclareMathOperator{\Id}{Id}
\DeclareMathOperator{\supp}{supp}
\DeclareMathOperator{\tr}{Tr}
\DeclareMathOperator{\td}{Td}
\DeclareMathOperator{\ch}{ch}
\DeclareMathOperator{\Ric}{Ric}
\DeclareMathOperator{\spec}{Spec}
\DeclareMathOperator{\pr}{pr} 
\newcommand{\db}{\overline\partial}
\newcommand{\spin}{$\text{spin}^c$ }
\newcommand{\norm}[1]{\lVert#1\rVert}
\newcommand{\abs}[1]{\lvert#1\rvert}
\newcommand{\om}{\omega}
\newcommand{\ov}{\overline}
\newcommand{\var}{\varepsilon}
\newcommand{\wi}{\widetilde}
\newcommand{\comment}[1]{}
\begin{document}

\title{Berezin-Toeplitz Quantization and its kernel expansion}

\date{\today}
\author{Xiaonan Ma}
\address{Universit{\'e} Paris Diderot - Paris 7,
UFR de Math{\'e}matiques, Case 7012,
Site Chevaleret,
75205 Paris Cedex 13, France}
\email{ma@math.jussieu.fr}
\thanks{Partially supported by Institut Universitaire de France}
\author{George Marinescu}
\address{Universit{\"a}t zu K{\"o}ln,  Mathematisches Institut,
    Weyertal 86-90,   50931 K{\"o}ln, Germany\\
    \& Institute of Mathematics `Simion Stoilow', Romanian Academy,
Bucharest, Romania}
\thanks{Partially supported by DFG funded projects SFB/TR 12 and MA 2469/2-1}
\email{gmarines@math.uni-koeln.de}

\subjclass[2000]{53D50, 53C21, 32Q15}

\begin{abstract}
We survey recent results \cite{MM07,MM08a,MM08b,MM10} 
about the asymptotic expansion of Toeplitz operators and their kernels, 
as well as Berezin-Toeplitz quantization. We deal in particular with calculation 
of the first coefficients of these expansions.
\end{abstract}
\maketitle

\tableofcontents


\section{Introduction}

The aim of the geometric quantization theory of Kostant and Souriau 
is to relate the classical observables 
(smooth functions) on a phase space (a symplectic manifold) to 
the quantum observables (bounded linear operators) on the quantum space 
(sections of a line bundle). 
Berezin-Toeplitz
quantization is a particularly efficient version of the geometric quantization theory
\cite{BFFLS,Berez:74,Fedo:96,Kos:70,Sou:70}.
Toeplitz operators and more generally Toeplitz structures were introduced in 
geometric quantization by
Berezin \cite{Berez:74} and Boutet de Monvel-Guillemin \cite{BouGu81}.
Using the analysis of Toeplitz structures 
\cite{BouGu81}, Bordemann-Meinrenken-Schlichenmaier \cite{BMS94} 
and Schlichenmaier \cite{Schlich:00} gave asymptotic expansion for
the composition of Toeplitz operators in the K\"ahler case.

The expansions we will be considering are asymptotic expansions relative 
to the high power $p$ of the quantum line bundle. The limit $p\to\infty$ 
is interpreted as semi-classical limit process, where the role of 
the Planck constant is played by $\hbar=1/p$. 

The purpose of this paper is to review some methods and results concerning 
Berezin-Toeplitz quantization which appeared in the recent articles 
\cite{MM08a,MM08b,MM10} and in the book \cite{MM07}.
Our approach is based on kernel calculus and the off-diagonal asymptotic 
expansion of the Bergman kernel.
This method allows not only to derive the asymptotic expansions of 
the Toeplitz operators but also to calculate the first coefficients of 
the various expansions. Since the formulas for the coefficients encode 
geometric data of the manifold and prequantum bundle they found extensive 
and deep applications in the study of K\"ahler manifolds 
(see e.g.\ \cite{D09,DouKle10,DouKle08,Fine08,Fine10,LM07,Wangl03,Wang05}, 
to quote just a few).
We will also twist the powers of the prequantum bundle with a fixed 
auxiliary bundle, so the formulas for the coefficients also mirror the curvature 
of the twisting bundle.   

The paper is divided in three parts, treating the quantization of 
K\"ahler manifolds, of K\"ahler orbifolds and finally of symplectic manifolds.

In these notes we do not attempt to be exhaustive, neither in the choice 
of topics, nor in what concerns the references.
For previous work on Berezin-Toeplitz star products in special cases
see \cite{CaGuRa:90,MoOr:83}. The paper \cite{MZ08} contains a detailed 
study of Bergman kernels and Toeplitz operators on K\"ahler and 
symplectic manifolds in the presence of a Hamiltonian action of 
a compact connected Lie group.  
We also refer the reader to the survey articles \cite{AE05,Ma10,Schlich:10} 
for more information for the Berezin-Toeplitz quantization and 
geometric quantization. The survey \cite{Ma10} gives a review 
in the context of K\"ahler and symplectic manifolds and explores 
the connections to symplectic reduction.


\section{Quantization of K\"ahler manifolds}
In this long section we explain our approach to Berezin-Toeplitz quantization of symplectic manifolds by specializing to the K\"ahler case. The method we use is then easier to follow and the coefficients of the asymptotic expansions have accurate expressions in terms of curvatures of the underlying manifold.

In Section 2.1, we review the definition of the Bergman projector, introduce the Toeplitz operators and their kernels. 

In  Section 2.2, we describe the spectral gap of the Kodaira-Laplace operator. On one hand, this implies the Kodaira-Serre vanishing theorem 
and the fact that for high powers of the quantum line bundle the cohomology concentrates in degree zero. 
On the other hand, the spectral gap provides the natural framework for the asymptotic expansions of the Bergman and Toeplitz kernels. 

In  Section 2.3 we describe the model operator, its spectrum and the kernel of its spectral projection on the lowest energy level.
The expansion of the Bergman kernel, which we study in Section  2.4, is obtained by a localization and rescaling technique due to Bismut-Lebeau \cite{BL91}, and reduces the problem to the model case.

With this expansion at hand, we formulate the expansion of the Toeplitz kernel in Section 2.5. Moreover, we observe in Section 2.6 that these expansion characterizes the Toeplitz operators and this characterization implies the expansion of the product of two Toeplitz operators and the existence of the Berezin-Toeplitz star product.

In Section 2.7, we explain how to apply the previous results when the Riemannian metric used to define the Hilbertian structure on the space of sections is arbitrary.

In Section 2.8, we turn to the general situation of complete K\"ahler manifolds and show how to apply the introduced method in this case.


\subsection{Bergman projections, Toeplitz operators, and their kernels}\label{s2.1}

We consider a complex manifold $(X,J)$ with complex structure $J$, and complex dimension $n$.
Let $L$ and $E$ be two holomorphic vector bundles on $X$. We assume that $L$ is a line bundle i.e.\ $\operatorname{rk}\,L=1$. The bundle $E$ is an auxiliary twisting bundle.
It is interesting to work with a twisting vector bundle $E$ for several reasons. For example, one has to deal with $(n,0)$-forms with values in $L^{p}$, so one sets $E=\Lambda^{n} (T^{*(1,0)}X)$. From a physical point of view, the presence of $E$ means a quantization of a system with several degrees of internal freedom.

We fix Hermitian metrics $h^L$, $h^E$ on $L$, $E$. Let $g^{TX}$ be a $J$-invariant Riemannian metric on $X$, i.e., $g^{TX}(Ju,Jv)= g^{TX}(u,v)$ for all $x\in X$ and $u,v\in T_xX$. The Riemannian volume form of $g^{TX}$ is denoted by $dv_X$.
On the space of smooth sections with compact support $\cC^{\infty}_0(X,L^p\otimes E)$ we introduce the $L^2$-scalar product associated to the metrics $h^L$, $h^E$ and
the Riemannian volume form $dv_X$ by
\begin{equation}\label{lm2.0}
\big\langle s_1,s_2 \big\rangle =\int_X\big\langle s_1(x),
s_2(x)\big\rangle_{L^p\otimes E}\,dv_X(x)\,.
\end{equation}
The completion of $\cC^{\infty}_0(X,L^p\otimes E)$ with respect to \eqref{lm2.0} is denoted as usual by $L^{2}(X,L^p\otimes E)$. 
We consider the space of holomorphic $L^{2}$ sections:
\begin{equation}\label{lm2.02}
H^{0}_{(2)}(X,L^p\otimes E):=\big\{s\in L^{2}(X,L^p\otimes E) : \text{$s$ is holomorphic}\big\}\,.
\end{equation}

Let us note an important property of the space $H^{0}_{(2)}(X,L^p\otimes E)$, which follows from the Cauchy estimates for holomorphic functions. Namely, for every compact set
$K\Subset X$ there exists $C_K>0$ such that
\begin{equation}\label{b1.1}
\sup_{x\in K}|S(x)|\leqslant C_K\|S\|_{L^2}\,,\quad\text{for all $S\in H^{0}_{(2)}(X,L^p\otimes E)$\,.}
\end{equation}
We deduce that $H^{0}_{(2)}(X,L^p\otimes E)$ is a closed subspace of $L^2(X,L^p\otimes E)$; one can also show that $H^{0}_{(2)}(X,L^p\otimes E)$ is separable (cf.\ \cite[p.\,60]{Weil:58}). 
\begin{definition} \label{almt2.1b}
The \emph{Bergman projection} is the orthogonal projection
\[
P_{p}:L^{2}(X,L^p\otimes E)\to H^{0}_{(2)}(X,L^p\otimes E)\,.
\]
\end{definition}

In view of \eqref{b1.1}, the Riesz representation theorem shows that for a fixed $x\in X$ there exists $P_p(x,\cdot)\in L^2(X,(L^{p}\otimes E)_x\otimes(L^{p}\otimes E)^{*})$ such that
\begin{equation}\label{lm2.01a}
S(x)=\int_{X}P_{p}(x,x') S(x') dv_{X}(x')\,,\quad\text{for all $S\in H^{0}_{(2)}(X,L^p\otimes E)$\,.}
\end{equation}

\begin{definition} \label{almt2.1c}
The section $P_{p}(\cdot,\cdot)$ of $(L^{p}\otimes E)\boxtimes(L^{p}\otimes E)^{*}$ over $X\times X$ is called the \emph{Bergman kernel} of $L^{p}\otimes E$.
\end{definition}
Set $d_p := \dim H^{0}_{(2)}(X,L^p\otimes E)\in\N\cup\{\infty\}$.
Let $\{S^p_i\}_{i=1}^{d_p}$ be any orthonormal basis of
$H^{0}_{(2)}(X,L^p\otimes E)$ with respect to the inner
product \eqref{lm2.0}. Using the estimate \eqref{b1.1} we can show that
\begin{equation} \label{bk2.4}
P_p(x,x')= \sum_{i=1}^{d_p} S^p_i (x) \otimes (S^p_i(x'))^*
\in (L^p\otimes E)_x\otimes (L^p\otimes E)_{x'}^*\,,
\end{equation}
where the right-hand side converges on every compact together with all its derivatives (see e.g.\ \cite[p.\,62]{Weil:58}).
Thus $P_{p}(\cdot,\cdot)\in \cC^{\infty}(X\times X,(L^{p}\otimes E)\boxtimes(L^{p}\otimes E)^{*})$.
It follows that 
\begin{equation}\label{lm2.01}
(P_{p}S)(x)=\int_{X}P_{p}(x,x') S(x') dv_{X}(x')\,,\quad\text{for all $S\in L^2(X,L^p\otimes E)$\,.}
\end{equation}
that is, $P_{p}(\cdot,\cdot)$ is the integral kernel of the Bergman projection $P_p$. 

\noindent
We recall that a Carleman kernel (see e.g.\ \cite{HalSu}) is a (measurable) section 
\[
T(\cdot,\cdot):X\times X\to (L^p\otimes E)\boxtimes (L^p\otimes E)^*
\] 
such that 
\begin{gather*}
T(x,\cdot)\in L^2(X,(L^p\otimes E)_x\otimes(L^p\otimes E)^*)\,,\:\text{for almost all $x\in X$},\\
T(\cdot,x')\in L^2(X,(L^p\otimes E)\otimes(L^p\otimes E)^*_{x'})\,,\:\text{for almost all $x'\in X$}.
\end{gather*}
A bounded linear operator $T$ on $L^2(X,L^p\otimes E)$ is called
Carleman operator if there exists a Carleman kernel $T(\cdot,\cdot)$ such that 
\begin{equation}\label{lm2.01c}
(TS)(x)=\int_{X}T(x,x') S(x') dv_{X}(x')\,,\quad\text{for all $S\in L^2(X,L^p\otimes E)$.}
\end{equation}
Note that $P_p(x,x')=P_p(x',x)^*$, thus $P_p(x,x')$ is a Carleman kernel and $P_p$ is a Carleman operator.
Let $T_1$, $T_2$ be two Carleman operators on $L^2(X,L^p\otimes E)$. Then the composition $T_1\circ T_2$ is a Carleman operator with kernel
\[
(T_1\circ T_2)(x,x'')=\int_X T_1(x,x')T_1(x',x'')\,dv_X(x')\,. 
\]

The Bergman kernel represents the local density of the space of holomorphic sections and is a very efficient tool to study properties of holomorphic sections.
It is an ``objet souple'' in the sense of Pierre Lelong, that is, it interpolates between the rigid objects of complex analysis and the flexible ones of real analysis.

Note that $P_p(x,x)\in\End(E)_x$, since $\End(L^p)=\C$. Using \eqref{bk2.4} and the formula 
$\tr_E \big[S^p_i (x) \otimes (S^p_i(x))^*\big]=\vert S^p_i (x)\vert^2$, we obtain immediately
\begin{equation} \label{bk2.41}
d_p=\int_X \tr_E P_p(x,x)\,dv_X(x)\,.
\end{equation}

\begin{definition}\label{ber-toe-def}
For a bounded section $f\in\cC^{\infty}(X,\End(E))$, set 
\begin{equation}\label{toe2.4}
T_{f,\,p}:L^2(X,L^p\otimes E)\longrightarrow L^2(X,L^p\otimes E)\,,
\quad T_{f,\,p}=P_p\,f\,P_p\,,
\end{equation}
where the action of $f$ is the pointwise multiplication by $f$.
The map which associates to  $f\in \cC^{\infty}(X,\End(E))$
the family of bounded operators $\{T_{f,\,p}\}_p$ on $L^2(X,L^p\otimes E)$ is called
the  {\em Berezin-Toeplitz quantization}\/.
\end{definition}
Note that $T_{f,\,p}$ is a Carleman operator with smooth integral kernel given by
\begin{equation}\label{toe2.5}
T_{f,\,p}(x,x')=\int_X P_p(x,x'')f(x'')P_p(x'',x')\,dv_X(x'')\,.
\end{equation}

For two arbitrary bounded sections
$f,g\in \cC^{\infty}(X,\End(E))$ it is easy to see that $T_{f,\,p}\circ T_{g,\,p}$ is not in general of the form
$T_{fg,\,p}$\,. But we have $T_{f,\,p}\circ T_{g,\,p}\sim T_{fg,\,p}$ asymptotically for $p\to\infty$. In order to explain this we introduce the following more general notion of Toeplitz operator.
\begin{definition}\label{toe-def}
A {\em Toeplitz operator}\index{Toeplitz operator}
is a sequence $\{T_p\}_{p\in\N}$ of linear operators
\begin{equation}\label{toe2.1}
T_{p}:L^2(X,L^p\otimes E)\longrightarrow L^2(X,L^p\otimes E)
\end{equation}
verifying $T_{p}=P_p\,T_p\,P_p$\,,
such that there exist a sequence $g_\ell\in\cC^\infty(X,\End(E))$ such that
for any $k\geqslant0$, there exists $C_k>0$
 with
\begin{equation}\label{toe2.3}
\Big\|T_p-\sum_{\ell=0}^kT_{g_\ell,\,p}\, p^{-\ell}\Big\|
\leqslant C_k\,\, p^{-k-1}\quad \text{ for any } p\in \N^*,
\end{equation}
where $\norm{\,\cdot\,}$ denotes the operator norm on the space of
bounded operators.
The section $g_0$ is called the {\em principal symbol} of $\{T_p\}$.
\end{definition}
\noindent
We express \eqref{toe2.3} symbolically by
\begin{equation}\label{atoe2.1}
T_p= \sum_{\ell=0}^k T_{g_\ell,p}\, p^{-\ell} + \mO(p^{-k-1}).
\end{equation}
If \eqref{toe2.3} holds for any $k\in \N$, then we write
\eqref{atoe2.1} with $k=+\infty$.
One of our goals is to show that $T_{f,\,p}\circ T_{g,\,p}$ is a Toeplitz operator in the sense of Definition
\ref{toe2.3}. This will be achieved by using the asymptotic expansions of the Bergman kernel and of the kernels of the Toeplitz operators.


\subsection{Spectral gap and vanishing theorem}\label{spec-gap}

In order to have a meaningful theory it is necessary that the spaces $H^0_{(2)}(X,L^p\otimes E)$ are as large as possible. In this section we describe conditions when the growth of $d_p=\dim H^0_{(2)}(X,L^p\otimes E)$ for $p\to\infty$ is maximal.

For this purpose we need Hodge theory, so we introduce the Laplace operator.
Let $T^{(1,0)}X$ be the holomorphic tangent bundle on $X$, $T^{(0,1)}X$ the conjugate of $T^{(1,0)}X$ 
and $T^{*(0,1)}X$ the dual bundle of $T^{(0,1)}X$. We denote by $\Lambda^q(T^{*(0,1)}X)$ the bundle of $(0,q)$-forms on $X$ and by
$\Omega^{0,q}(X,F)$ the space of sections of the bundle $\Lambda^q(T^{*(0,1)}X)\otimes F$ over $X$, for some vector bundle $F\to X$.

The Dolbeault operator acting on sections of the holomorphic vector bundle $L^p\otimes E$ gives rise to the Dolbeault complex
\[\Big(\Omega^{0,\bullet}(X,L^p\otimes E), \overline{\partial}^{L^p\otimes E}\Big)\,.\]
 Its cohomology,
called Dolbeault cohomology,
is denoted by $H^{0,\bullet}(X,L^p\otimes E)$.
We denote by $\overline{\partial} ^{L^p\otimes E,*}$ the formal adjoint of $\overline{\partial} ^{L^p\otimes E}$
with respect to the $L^2$-scalar product \eqref{lm2.0}.
Set
\begin{align}\label{lm2.1}
\begin{split}
D_p &= \sqrt{2}\big(\, \overline{\partial}^{L^p\otimes E}
+ \,\overline{\partial}^{L^p\otimes E,*}\,\big)
\,,\\
\square^{L^p\otimes E} &= \tfrac{1}{2}D^2_p =\overline{\partial}^{L^p\otimes E}\,
\overline{\partial}^{L^p\otimes E,*}
+\,\overline{\partial}^{L^p\otimes E,*}\,\overline{\partial}^{L^p\otimes E}.
\end{split}
\end{align}
The operator $\square^{L^p\otimes E}$ is called the \emph{Kodaira-Laplacian}. It acts on
$\Omega ^{0,\bullet}(X, L^p\otimes E)$
and preserves its $\Z$-grading.

Let us consider first that $X$ is a \emph{compact K\"ahler} manifold endowed with a K\"ahler form $\omega$ and $L$ is a \emph{prequantum} line bundle. The latter means that there exists a Hermitian metric $h^L$ such that the curvature $R^L=(\nabla^L)^2$ of the holomorphic Hermitian connection $\nabla^L$ on $(L, h^L)$ satisfies
\begin{equation}\label{prequantum}
\omega=\frac{\sqrt{-1}}{2\pi}R^{L}\,.
\end{equation}
In particular, $L$ is a positive line bundle.

By Hodge theory, the elements of $\Ker(\square^{L^p\otimes E})$,
called \emph{harmonic forms}, represent the Dolbeault cohomology. Namely,
\begin{equation} \label{lm2.3}
\Ker (D_p|_{\Omega ^{0,q}})  =\Ker (D_p^2|_{\Omega ^{0,q}})
\simeq H^{0,q} (X, L^p\otimes E).
\end{equation}
and the spaces $H^{0,q} (X, L^p\otimes E)$ are finite dimensional.
Note that $H^{0,0}(X,L^p\otimes E)$ is the space of holomorphic sections of $L^p\otimes E$, denoted shortly by $H^{0}(X,L^p\otimes E)$. Since $X$ is compact we have $H^0(X,L^p\otimes E)=H^0_{(2)}(X,L^p\otimes E)$ for any Hermitian metrics on $L^p$, $E$ and volume form on $X$.
A crucial tool in our analysis of the Bergman kernel is the following spectral gap of the Kodaira-Laplacian.
\begin{theorem}[{\cite[Th.\,1.1]{BVa89}, \cite[Th.\,1.5.5]{MM07}}]\label{bkt1.1}
There exist constants positive $C_0$, $C_L$ such that for any $p\in \N$ and
any $s\in\Omega^{0, >0}(X,L^p\otimes E)
=\bigoplus_{q\geqslant 1}\Omega^{0,q}(X,L^p\otimes E)$,
\begin{equation}\label{bk1.4}
\norm{D_{p}s}^2_{L^2}\geqslant(2C_0 p -C_L)\norm{s}^2_{L^2}\,.
\end{equation}
Hence
\begin{equation}\label{bk1.41}
\spec(\square_p)\subset\big\{0\big\}\cup\, \big]p\,C_0-\frac{1}{2}C_L,+\infty\big[\,,
\end{equation}
where $\spec(\square_p)$ denotes the spectrum
of the Kodaira Laplacian $\square_p$\,.
\end{theorem}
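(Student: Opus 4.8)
The plan is to reduce the spectral statement \eqref{bk1.41} to the pointwise operator lower bound \eqref{bk1.4} for $D_p^2=2\square_p$, and to obtain the latter from a Bochner--Kodaira--Nakano (Lichnerowicz) identity combined with the positivity of $L$. First I would record that, since $X$ is K\"ahler and hence torsion-free, the Dolbeault operator and its adjoint can be written through the Chern connection $\nabla$ on $\Lambda^{\bullet}(T^{*(0,1)}X)\otimes L^p\otimes E$ as $\db=\sum_j\bar w^j\wedge\nabla_{\bar w_j}$ and $\db^{\,*}=-\sum_j\iota_{\bar w_j}\nabla_{w_j}$, where $\{w_j\}$ is a local orthonormal frame of $T^{(1,0)}X$, the $\bar w^j$ form the dual antiholomorphic coframe, and $\iota$ denotes contraction.

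Next I would expand $D_p^2=2(\db\,\db^{\,*}+\db^{\,*}\db)$, commuting the holomorphic derivatives $\nabla_{w_j}$ past the antiholomorphic ones $\nabla_{\bar w_k}$. Each such commutator produces the bundle curvature, $[\nabla_{w_j},\nabla_{\bar w_k}]=R^{L^p\otimes E}(w_j,\bar w_k)=p\,R^L(w_j,\bar w_k)+R^E(w_j,\bar w_k)$ up to bounded terms, so that the computation organizes $D_p^2$ into three pieces,
\[
D_p^2=\Delta_0+2p\,\tau(R^L)+\mO(1),
\]
where $\Delta_0\geqslant0$ is a manifestly nonnegative second-order operator assembled from the antiholomorphic derivatives, $\tau(R)=\sum_{j,k}R(w_j,\bar w_k)\,\bar w^k\wedge\iota_{\bar w_j}$ acts on $\Lambda^{\bullet}(T^{*(0,1)}X)$, and $\mO(1)$ collects all $p$-independent zeroth-order contributions ($R^E$, the scalar curvature of $X$, and, as $X$ is K\"ahler, no torsion terms). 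The point I would stress is that the curvature of $L$ enters here through the connection commutator as the \emph{untraced} operator $\tau(R^L)$, not through the Clifford/Nakano term $[\sqrt{-1}R^L,\Lambda]$; the two differ by a scalar multiple of $p$ that is reabsorbed into the zero-point energy of $\Delta_0$. This is precisely what saves the estimate on $(0,q)$-forms with $1\leqslant q<n$, where the naive Nakano inequality points the wrong way.

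I would then analyze $\tau(R^L)$. Positivity of $L$ means the Hermitian matrix $\big(R^L(w_j,\bar w_k)\big)$ is positive definite; diagonalizing it with eigenvalues $a_1(x),\dots,a_n(x)>0$ gives $\tau(R^L)=\sum_j a_j\,\bar w^j\wedge\iota_{\bar w_j}$, whose eigenvalue on $\bar w^{j_1}\wedge\cdots\wedge\bar w^{j_q}$ is $\sum_\ell a_{j_\ell}\geqslant q\min_j a_j$. Since $X$ is compact, $C_0:=\inf_{x,j}a_j(x)>0$, so $\tau(R^L)\geqslant C_0\,N$ with $N=\sum_j\bar w^j\wedge\iota_{\bar w_j}$ the number operator; in the prequantum normalization \eqref{prequantum} one has $a_j\equiv2\pi$ and $C_0=2\pi$. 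On $\Omega^{0,>0}$ one has $N\geqslant\Id$, so using $\Delta_0\geqslant0$ and absorbing the uniformly bounded $\mO(1)$ term into a constant $C_L>0$ (again by compactness) yields $\langle D_p^2 s,s\rangle\geqslant(2C_0p-C_L)\|s\|^2$, which is \eqref{bk1.4}.

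Finally, \eqref{bk1.41} follows formally: $\square_p=\tfrac12D_p^2\geqslant0$ always, and \eqref{bk1.4} shows that on $\Omega^{0,>0}$ its spectrum lies in $[pC_0-\tfrac12C_L,+\infty[$; since $\square_p$ commutes with $\db$, any nonzero eigenvalue in degree $0$ is carried by $\db$ to an eigenvalue in degree $1\subset\Omega^{0,>0}$, so the whole spectrum avoids $]0,pC_0-\tfrac12C_L[$. I expect the main obstacle to lie in the second and third steps above: carrying out the Bochner--Kodaira bookkeeping so that $\Delta_0$ is genuinely nonnegative and the leading term is exactly $2p\,\tau(R^L)$, and in particular controlling the first-order cross terms between $\nabla$ and the bundle curvatures so that they enter only at order $\mO(1)$ rather than at order $p$ — this is exactly where the K\"ahler (torsion-free) hypothesis is essential.
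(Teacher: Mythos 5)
Your argument follows the same route as the proof the paper cites from \cite[Th.\,1.5.5]{MM07} (and as Theorem \ref{specDirac} in the symplectic part of the survey): write $\square_p$ via a Bochner--Kodaira/Lichnerowicz identity as a nonnegative rough Laplacian plus the \emph{untraced} curvature operator $\tau(R^{L})=\sum_{j,k}R^L(w_j,\overline w_k)\,\overline w^k\wedge\iota_{\overline w_j}$, which under \eqref{prequantum} acts as $2\pi$ times the number operator, bound the $p$-independent remainder by compactness, and use that $\square_p$ commutes with $\db$ to push any nonzero eigenvalue of degree $0$ into $\Omega^{0,>0}$. Your identification of the correct curvature operator (number operator rather than the Nakano commutator $[\sqrt{-1}R^L,\Lambda]$, which has the wrong sign on $(0,q)$-forms for $q<n$), the positivity of $\Delta_0$, and the final spectral deduction are all sound, and your constant $C_0=2\pi$ matches the $4\pi p$ appearing in \eqref{main1}.

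There is, however, one genuine gap relative to the statement as given. The theorem is asserted for an \emph{arbitrary} $J$-invariant metric $g^{TX}$ --- the paper stresses immediately after the statement that one does not assume $g^{TX}(u,v)=\omega(u,Jv)$ --- whereas your opening step ``since $X$ is K\"ahler and hence torsion-free, $\db^{\,*}=-\sum_j\iota_{\overline w_j}\nabla_{w_j}$'' conflates the K\"ahlerness of the prequantum form $\omega=\frac{\sqrt{-1}}{2\pi}R^L$ with that of the metric actually used to form adjoints. For a general Hermitian form $\Theta=g^{TX}(J\cdot,\cdot)$ with $d\Theta\neq 0$, the adjoint formula and the Bochner--Kodaira identity acquire torsion terms (this is precisely why \cite{BVa89} invoke Demailly's non-K\"ahlerian formula with torsion). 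These torsion contributions are \emph{first-order} operators with $p$-independent coefficients, so they cannot simply be filed under ``$\mO(1)$ zeroth-order contributions''; one must absorb them into the nonnegative piece via Cauchy--Schwarz, e.g.\ $|\langle Ts,\nabla''s\rangle|\leqslant\varepsilon\|\nabla''s\|^2+C_\varepsilon\|s\|^2$, before reading off the leading term $2p\,\tau(R^L)$. Likewise, for general $g^{TX}$ the eigenvalues $a_j$ of $R^L$ are no longer identically $2\pi$, and your compactness fallback $C_0=\inf_{x,j}a_j>0$ is then the constant one must keep. As written, your proof establishes the theorem for $g^{TX}=g^{TX}_\omega$ (the case used throughout Sections 2.4--2.6); the extra absorption argument is what is needed to reach the stated generality.
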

Theorem \ref{bkt1.1} was first proved by Bismut-Vasserot \cite[Th.\,1.1]{BVa89} using the non-k\"ahlerian Bochner-Kodaira-Nakano formula with torsion due to Demailly, see e.g.\ \cite[Th.\,1.4.12]{MM07} (note that $g^{TX}$ is arbitrary, we don't suppose that it is the metric associated to $\omega$, i.e., $g^{TX}(u,v)=\omega(u,Jv)$ for $u,v\in T_xX$).
By Theorem \ref{bkt1.1}, we conclude:
\begin{theorem}[Kodaira--Serre vanishing Theorem]\label{bkt1.2}
\index{vanishing Theorem!Kodaira--Serre}\label{bkt1.3}
 If $L$ is a positive line bundle, then there exists $p_0>0$
such that for any $p\geqslant p_0$,
\begin{equation}\label{bk1.11}
H^{0,q}(X,L^p\otimes E)= 0 \quad \mbox{\rm for any } \, q>0.
\end{equation}
\end{theorem}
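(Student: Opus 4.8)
The plan is to read off the vanishing directly from the spectral gap of Theorem~\ref{bkt1.1}, combined with the Hodge-theoretic identification already recorded in \eqref{lm2.3}. The entire analytic substance lives in the spectral estimate, so the deduction itself is a one-line contradiction; I would simply make the bookkeeping explicit.

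First I would fix $q>0$ and recall from \eqref{lm2.3} that $\Ker(D_p|_{\Omega^{0,q}})\simeq H^{0,q}(X,L^p\otimes E)$, so that every cohomology class is represented by a harmonic form, i.e.\ an $s\in\Omega^{0,q}(X,L^p\otimes E)$ with $D_p s=0$. Thus it suffices to show that, for $p$ large, the only such $s$ is $s=0$. Since $L$ is positive (here prequantum, by \eqref{prequantum}), Theorem~\ref{bkt1.1} applies and furnishes positive constants $C_0$, $C_L$ with
\[
\norm{D_p s}^2_{L^2}\geqslant (2C_0 p - C_L)\,\norm{s}^2_{L^2}
\]
for all $s\in\Omega^{0,>0}(X,L^p\otimes E)$. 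If $s$ is harmonic of degree $q\geqslant 1$, the left-hand side vanishes, so $(2C_0 p - C_L)\,\norm{s}^2_{L^2}\leqslant 0$.

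It remains to choose $p_0$. Taking $p_0$ so that $2C_0 p_0>C_L$ (for instance any integer exceeding $C_L/(2C_0)$) makes the factor $2C_0 p - C_L$ strictly positive for every $p\geqslant p_0$, forcing $\norm{s}^2_{L^2}\leqslant 0$ and hence $s=0$. Therefore $\Ker(D_p|_{\Omega^{0,q}})=0$, and by \eqref{lm2.3} we conclude $H^{0,q}(X,L^p\otimes E)=0$ for all $q>0$ and all $p\geqslant p_0$. A small but useful point is that \eqref{bk1.4} holds uniformly on the full positive-degree part $\Omega^{0,>0}$, so a single $p_0$ works simultaneously for every $q\geqslant 1$.

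As for the main obstacle: in this deduction there is essentially none, since all the difficulty has been absorbed into Theorem~\ref{bkt1.1}, whose proof rests on the Bochner--Kodaira--Nakano formula with torsion of Bismut--Vasserot. Once that uniform spectral gap of order $p$ is granted, the vanishing is immediate; the only genuinely nontrivial input is the linear-in-$p$ lower bound on $\square_p$ in positive degree.
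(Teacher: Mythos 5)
Your proposal is correct and is exactly the argument the paper intends: the text derives Theorem \ref{bkt1.2} directly from the spectral gap of Theorem \ref{bkt1.1} together with the Hodge-theoretic identification \eqref{lm2.3}, with the choice of $p_0$ satisfying $2C_0p_0>C_L$ left implicit. You have merely made the one-line deduction explicit.
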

Recall that for a compact manifold $X$ and a holomorphic vector bundle $F$, the Euler number $\chi(X,F)$ is defined by
\begin{equation}\label{alm2.30}
\chi(X,F)= \sum_{q=0}^n (-1)^q \dim H^{0,q}(X,F).
\end{equation}
By the Riemann-Roch-Hirzebruch Theorem \cite[Th.\,14.6]{MM07} we have
\begin{equation}\label{alm2.31}
\chi(X,F)= \int_X \td\big(T^{(1,0)}X\big)\ch(F)\,,
\end{equation}
where $\td$ and $\ch$ indicate the Todd class and the Chern character, respectively.
By the Kodaira-Serre vanishing \eqref{bk1.11},
\begin{equation}\label{alm2.311}
d_p=\dim H^0(X,L^p\otimes E)=\chi(X,L^p\otimes E)\,,\quad p\geqslant p_0\,.
\end{equation}
Therefore, for $p\geqslant p_0$,
\begin{equation}
\begin{split}
&\dim H^{0}(X,L^p\otimes E)=\int_X \td\big(T^{(1,0)}X\big)\ch(L^p\otimes E) \\
&=\rank (E) \int_X \frac{c_1(L)^n}{n!}\, p^n
+ \int_X \Big(c_1(E) + \frac{\rank (E)}{2} c_1(T^{(1,0)}X)\Big)
\frac{c_1(L)^{n-1}}{(n-1)!}\, p^{n-1}\\
 &\qquad+ \cO(p^{n-2}). 
  \end{split}
\end{equation}
Note that the first Chern class $c_1(L)$ is represented by $\omega$ (see \eqref{prequantum}) and $c_1(E)$
is represented by $\frac{\imat}{2\pi}\tr [R^E]$. As a conclusion we have:
\begin{theorem}
Let $(X,\omega)$ be a compact K\"ahler manifold and let $(L,h^L)$ be a prequantum line bundle satisfying \eqref{prequantum}. Then $d_p$ is a polynomial of degree $n$ with positive leading term $\frac{1}{n!}\int_X c_1(L)^n$ {\rm(}the volume of the manifold $(X,\omega)${\rm)}.
\end{theorem}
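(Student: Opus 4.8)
The plan is to recognize this statement as an immediate consequence of the vanishing theorem and the Riemann--Roch--Hirzebruch formula recorded just above, so that the actual work consists of assembling those ingredients and reading off the polynomial structure. First I would invoke the Kodaira--Serre vanishing (Theorem \ref{bkt1.2}): for $p\geqslant p_0$ all higher Dolbeault cohomology of $L^p\otimes E$ vanishes, so by \eqref{alm2.30} the Euler number collapses to $\chi(X,L^p\otimes E)=\dim H^0(X,L^p\otimes E)=d_p$, which is precisely \eqref{alm2.311}. This reduces the computation of $d_p$, for large $p$, to the purely topological quantity $\chi(X,L^p\otimes E)$.

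Next I would evaluate this Euler number via \eqref{alm2.31}, writing $\chi(X,L^p\otimes E)=\int_X \td(T^{(1,0)}X)\,\ch(L^p\otimes E)$. Using the multiplicativity of the Chern character together with $\ch(L^p)=e^{p\,c_1(L)}$ gives $\ch(L^p\otimes E)=e^{p\,c_1(L)}\ch(E)$, hence
\[
d_p=\int_X \td\big(T^{(1,0)}X\big)\,e^{p\,c_1(L)}\,\ch(E)\qquad(p\geqslant p_0).
\]
Expanding $e^{p\,c_1(L)}=\sum_{k\geqslant 0}\frac{p^k}{k!}\,c_1(L)^k$ and retaining only the component of total degree $2n$, the only one surviving integration over the $2n$-real-dimensional manifold $X$, exhibits $d_p$ as a polynomial in $p$ of degree at most $n$. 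The coefficient of $p^n$ arises solely from the top power $\frac{1}{n!}c_1(L)^n$ paired with the degree-zero parts of $\td(T^{(1,0)}X)$, equal to $1$, and of $\ch(E)$, equal to $\rank E$; this yields leading coefficient $\frac{\rank E}{n!}\int_X c_1(L)^n$, while the subleading terms are exactly those already displayed before the statement.

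Finally I would confirm that the degree is exactly $n$ and the leading coefficient positive. By the prequantum condition \eqref{prequantum}, $c_1(L)$ is represented by the K\"ahler form $\om$, so $\int_X c_1(L)^n=\int_X\om^n=n!\,\vol(X,\om)>0$; in particular this is nonzero, the degree is precisely $n$, and the leading term is the asserted positive volume $\frac{1}{n!}\int_X c_1(L)^n$, up to the factor $\rank E$ contributed by the twisting bundle. The one point deserving genuine care, and the only obstacle in an otherwise formal deduction, is that the identity $d_p=\chi(X,L^p\otimes E)$, and hence the polynomiality of $d_p$ itself, is guaranteed only for $p\geqslant p_0$: for small $p$ the higher cohomology may be nonzero, so the statement must be read in the eventual sense.
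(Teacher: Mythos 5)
Your proof is correct and follows exactly the paper's own route: Kodaira--Serre vanishing reduces $d_p$ to $\chi(X,L^p\otimes E)$ for $p\geqslant p_0$, Riemann--Roch--Hirzebruch exhibits this as a polynomial of degree $n$, and the prequantum condition identifies the leading coefficient with the (positive) volume. Your remarks that the leading term carries a factor $\rank E$ and that polynomiality is only guaranteed for $p\geqslant p_0$ are both accurate and match what the paper's displayed expansion actually shows.
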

Let us consider now the general situation of a (non-compact) complex manifold $(X,J)$. As before we are given a Hermitian metric on $X$, that is, a $J$-compatible Riemannian metric $g^{TX}$. We denote by $\Theta$ the associated $(1,1)$-form, i.e.,
$\Theta(u,v)=g^{TX}(Ju,v)$, for all $x\in X$ and $u,v\in T_xX$. We say that the Hermitian manifold $(X,\Theta)$ is complete if the Riemannian metric $g^{TX}$ is complete. Consider further a Hermitian holomorphic vector bundle $(F,h^F)\to X$. Let us denote by $\Omega^{0,q}_{(2)}(X,F):=L^2(X,\Lambda^{q}(T^{*(0,1)}X)\otimes F)$.
We have the complex of closed, densely defined operators
\begin{equation}\label{ell0-1}
\Omega^{0,q-1}_{(2)}(X,F)\stackrel{T=\db^F}{\longrightarrow}\Omega^{0,q}_{(2)}(X,F)
\stackrel{S=\db^F}{\longrightarrow}\Omega^{0,q+1}_{(2)}(X,F)\,,
\end{equation}
where $T$ and $S$ are the maximal extensions of $\db^F$, i.e.,
\[\Dom(\db^F)=\{s\in{\Omega^{0,\bullet}_{(2)}(X,F):
\db^F s\in\Omega^{0,\bullet}_{(2)}(X,F)}\}\]
where $\db^F s$ is calculated in the sense of distributions.
Note that $\Im(T)\subset\Ker(S)$, so $ST=0$.
The $q$-th
\emph{$L^2$ Dolbeault cohomology}
is defined by
\begin{equation}\label{ell-coh}
{H}^{0,\,q}_{(2)}(X,F)
:=\frac{\Ker(\db^F)\cap \Omega^{0,q}_{(2)}(X,F)}{\Im(\db^F)\cap \Omega^{0,q}_{(2)}(X,F)}\;.
\end{equation}
Consider the quadratic form $Q$ given by
\begin{equation}\label{ell2,1}
\begin{split}
&\Dom(Q):=\Dom(S)\cap\Dom(T^*), \\
Q(s_1,s_2)=&\langle S s_1,S s_2\rangle+\langle T^*s_1,T^*s_2\rangle\,,
\quad\text{for  } s_1,s_2\in \Dom(Q).
\end{split}
\end{equation}
where $T^*$ is the Hilbertian adjoint of $T$. For the following result due essentially to Gaffney one may consult
\cite[Prop.\,3.1.2, Cor.\,3.3.4]{MM07}.
\begin{lemma}
Assume that the Hermitian manifold $(X,\Theta)$ is complete. Then the Kodaira-Laplacian $\square^F:\Omega^{0,\bullet}_{0}(X,F)\to\Omega^{0,\bullet}_{(2)}(X,F)$
is essentially self-adjoint. Its associated quadratic form is the form $Q$ given by \eqref{ell2,1}.
\end{lemma}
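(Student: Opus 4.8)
The plan is to prove essential self-adjointness first and then identify the form. Since $\square^F=\db^F\db^{F,*}+\db^{F,*}\db^F$ satisfies $\langle\square^F s,s\rangle=\|\db^F s\|^2+\|\db^{F,*}s\|^2\geqslant0$ for $s\in\Omega^{0,\bullet}_0(X,F)$, the operator is symmetric and nonnegative on this core, hence admits self-adjoint extensions. For a nonnegative symmetric operator, essential self-adjointness is equivalent to the vanishing of a single deficiency space, namely $\Ker(\square^{F,*}+\Id)=0$, where $\square^{F,*}$ denotes the Hilbert-space adjoint. Thus it suffices to show that every $u\in L^2$ solving $(\square^F+1)u=0$ in the sense of distributions vanishes. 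Since $\square^F$ is elliptic, such a $u$ is automatically smooth.

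The crux is to feed in completeness. Because $g^{TX}$ is complete, the distance function to a fixed point can be smoothed so as to produce a sequence $\phi_j\in\cC^\infty_0(X)$ with $0\leqslant\phi_j\leqslant1$, with $\phi_j\to1$ pointwise on $X$, and with $\sup_X|d\phi_j|\leqslant\var_j\to0$. I would pair the equation $\square^F u=-u$ with the compactly supported test form $\phi_j^2u$ and integrate by parts, moving one $\db^F$ (resp.\ $\db^{F,*}$) onto $\phi_j^2u$. The leading terms assemble into $\|\phi_j\db^F u\|^2+\|\phi_j\db^{F,*}u\|^2+\|\phi_j u\|^2$, while every remaining term carries a factor $\db^F\phi_j^2=2\phi_j\,\db^F\phi_j$ and is therefore bounded by $C\var_j\big(\|\phi_j\db^F u\|+\|\phi_j\db^{F,*}u\|\big)\|u\|$. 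A Cauchy--Schwarz estimate with a small parameter absorbs these cross terms into the leading ones and yields $\|\phi_j u\|^2\leqslant C'\var_j^2\|u\|^2$. Letting $j\to\infty$ and using $\phi_j\to1$ gives $\|u\|=0$, so $u=0$ and $\square^F$ is essentially self-adjoint. I expect this cutoff step---producing the $\phi_j$ and checking that completeness is exactly what kills the boundary/cross terms---to be the main obstacle; the rest is functional-analytic bookkeeping.

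For the quadratic form, I would invoke the Gaffney-type density statement, again a consequence of completeness via the same cutoffs: $\Omega^{0,\bullet}_0(X,F)$ is a core for the maximal extension of $\db^F$, so $S=T=\db^F_{\max}$, and $T^*$ coincides with the maximal extension of the formal adjoint $\db^{F,*}$. Consequently the form $Q$ on $\Dom(S)\cap\Dom(T^*)$ is precisely the closure of the nonnegative form $s\mapsto\|\db^F s\|^2+\|\db^{F,*}s\|^2$ defined on $\Omega^{0,\bullet}_0(X,F)$. By the correspondence between closed nonnegative forms and nonnegative self-adjoint operators, $Q$ is the form of the Friedrichs extension of $\square^F|_{\Omega^{0,\bullet}_0}$; but essential self-adjointness forces the Friedrichs extension to be the unique self-adjoint extension $\overline{\square^F}$. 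Hence the form associated with $\overline{\square^F}$ is exactly $Q$, as claimed.
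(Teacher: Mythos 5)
Your argument is correct and is essentially the classical Gaffney proof, which the paper does not reproduce but delegates to \cite[Prop.\,3.1.2, Cor.\,3.3.4]{MM07}: completeness supplies cutoffs $\phi_j$ with $\sup_X|d\phi_j|\to0$, and these kill the error terms both in the computation $\Ker(\square^{F,*}+\Id)=0$ and in the density statement identifying the form. The one step you should not gloss over is the identification $\overline{q_0}=Q$: the cutoffs only reduce a general element of $\Dom(S)\cap\Dom(T^*)$ to a compactly supported one, which need not be smooth, so the density of $\Omega^{0,\bullet}_0(X,F)$ for the graph norm of $Q$ additionally requires a Friedrichs mollification argument (using that $\db^F$ and its formal adjoint are first order); with that supplied, the rest of your functional-analytic bookkeeping is sound.
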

We denote by $R^{\det}$ the curvature of the holomorphic Hermitian
connection $\nabla^{\det}$ on $K_X^*=\det (T^{(1,0)}X)$.
We have the following generalization of Theorems \ref{bkt1.1} and \ref{bkt1.2}.
\begin{theorem}[{\cite[Th.\,6.1.1]{MM07}, \cite[Th.\,3.11]{MM08a}}] \label{noncompact0}
Assume that $(X,\Theta)$ is a complete Hermitian manifold. Let $(L,h^L)$ and $(E,h^E)$ Hermitian holomorphic vector bundles of rank one and $r$, respectively.
Suppose that there exist $\varepsilon>0$, $C>0$ such that\,{\rm:}
\begin{equation}\label{i}
\sqrt{-1}R^L >\varepsilon\Theta\,,
\quad\,\sqrt{-1}(R^{\det}+R^E)> -C\Theta \Id_E\,,\quad\,
|\partial \Theta|_{g^{TX}}< C,
\end{equation}
Then there exists $C_1>0$ and $p_0\in\N$ such that for $p\geqslant p_0$ the quadratic form $Q_p$ associated to the Kodaira-Laplacian $\square_p:=\square^{L^p\otimes E}$ satisfies
\begin{equation}\label{ell4,1}
Q_p(s,s)\geqslant  C_1 p \,\norm{s}^2_{L^2}\,,
\quad \text{for $s\in\Dom(Q_p)\cap\Omega^{0,q}_{(2)}(X,L^p\otimes{E})$, $q>0$}\,.
\end{equation}
Especially
\begin{equation}\label{ell4,2}
{H}^{0,\,q}_{(2)}(X,L^p\otimes{E})=0\,,\quad\text{for $p\geqslant p_0$, $q>0$}
\end{equation}
and the spectrum $\spec(\square_p)$
of the Kodaira Laplacian $\square_p$ acting on $L^2(X,L^p\otimes E)$ is contained in the
set $\{0\}\cup[\,p\,C_1 ,\infty[$\,.
\end{theorem}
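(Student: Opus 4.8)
The plan is to derive the fundamental estimate \eqref{ell4,1} from the non-Kählerian Bochner--Kodaira--Nakano formula with torsion due to Demailly (see \cite[Th.\,1.4.12]{MM07}), which replaces the Kähler identities on a general Hermitian manifold. Throughout I write $\Lambda$ for the adjoint of the Lefschetz operator $\Theta\wedge\,\cdot\,$. The first step is a bundle identification that moves the problem from $(0,q)$-forms, where a positive line bundle produces \emph{no} positivity, to $(n,q)$-forms, where it does. Using the canonical isometry
\[
\Lambda^q(T^{*(0,1)}X)\otimes L^p\otimes E\;\cong\;\Lambda^n(T^{*(1,0)}X)\otimes\Lambda^q(T^{*(0,1)}X)\otimes L^p\otimes\big(E\otimes K_X^*\big),
\]
which holds because $\Lambda^n(T^{*(1,0)}X)\otimes K_X^*$ is canonically trivial, I identify $s\in\Omega^{0,q}(X,L^p\otimes E)$ with an $(n,q)$-form valued in $L^p\otimes F$, where $F:=E\otimes K_X^*$ carries the curvature $R^F=R^E+R^{\det}$. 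Since the identification is a holomorphic $L^2$-isometry it intertwines $\db$ with its adjoint, hence carries $\square_p$ and the form $Q_p$ to the corresponding objects for $L^p\otimes F$; it therefore suffices to bound $Q_p$ on $(n,q)$-forms valued in $L^p\otimes F$.

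The heart of the argument is the curvature estimate. Dropping the nonnegative $\partial$-terms, the Bochner--Kodaira--Nakano inequality with torsion on $(n,q)$-forms takes the schematic form
\[
Q_p(s,s)\;\geqslant\;\big\langle[\sqrt{-1}R^{L^p\otimes F},\Lambda]\,s,s\big\rangle-\big|\big\langle\mathcal{T}\,s,s\big\rangle\big|,
\]
where $\mathcal{T}$ collects the $p$-independent torsion operators built from $\partial\Theta$. Diagonalising $\sqrt{-1}R^L$ with respect to $\Theta$ with eigenvalues $a_1,\dots,a_n$, the operator $[\sqrt{-1}R^L,\Lambda]$ acts on an $(n,q)$-form with anti-holomorphic multi-index $J$, $|J|=q$, by the scalar $\sum_{j\in J}a_j$. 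As $\sqrt{-1}R^L>\varepsilon\Theta$ forces $a_j>\varepsilon$, for $q\geqslant1$ this gives
\[
\big\langle[\sqrt{-1}pR^L,\Lambda]\,s,s\big\rangle\;\geqslant\;p\,\varepsilon\,q\,\norm{s}^2\;\geqslant\;p\,\varepsilon\,\norm{s}^2.
\]
In the same way, the hypothesis $\sqrt{-1}(R^{\det}+R^E)>-C\Theta\,\Id_E$ yields $\langle[\sqrt{-1}R^F,\Lambda]s,s\rangle\geqslant-Cn\norm{s}^2$, so the full curvature term is at least $(p\varepsilon-Cn)\norm{s}^2$.

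It remains to control the torsion and to pass to the non-compact setting. The zeroth-order part of $\mathcal{T}$ has pointwise norm bounded by a constant times $|\partial\Theta|_{g^{TX}}$, hence by $C$; the remaining first-order contributions pair $s$ with $\db^{L^p\otimes F,*}s$ and, via Cauchy--Schwarz, are absorbed into a small multiple $\eta\,Q_p(s,s)$ together with a $p$-independent remainder $\tfrac{C}{\eta}\norm{s}^2$. Fixing $\eta$ small, one obtains $Q_p(s,s)\geqslant(p\varepsilon-Cn-C')\norm{s}^2$ for compactly supported $s$ of positive degree, with $C'$ independent of $p$. Choosing $p_0$ so large that $p\varepsilon-Cn-C'\geqslant\tfrac12\varepsilon\,p$ for $p\geqslant p_0$ and setting $C_1:=\varepsilon/2$ gives \eqref{ell4,1} on $\Omega^{0,>0}_0(X,L^p\otimes E)$; since $(X,\Theta)$ is complete, the Lemma above (due to Gaffney) shows that $\Omega^{0,\bullet}_0(X,L^p\otimes E)$ is a core for $Q_p$, so the estimate extends by density to all of $\Dom(Q_p)$. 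The vanishing \eqref{ell4,2} is then immediate, as a class in $H^{0,q}_{(2)}$ with $q>0$ is represented by a harmonic form in $\Ker\square_p$, on which $Q_p$ vanishes; and $\spec(\square_p)\subset\{0\}\cup[\,pC_1,\infty[$ follows because $\square_p\geqslant0$ has no spectrum in $]0,pC_1[$ on positive-degree forms. \textbf{The main difficulty} I anticipate is precisely the torsion term: verifying that $\mathcal{T}$ is genuinely of order zero in $p$ and that its first-order coupling to $\db^{L^p\otimes F,*}s$ can be absorbed without eroding the $\mO(p)$ gap, together with the density argument promoting the estimate from compactly supported forms to the full form domain in the complete, non-compact case.
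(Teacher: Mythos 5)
Your proposal is correct and follows essentially the same route as the proof in the cited sources ([MM07, Th.\,6.1.1], [MM08a, Th.\,3.11]): the twist by $K_X^*$ to pass to $(n,q)$-forms, Demailly's Bochner--Kodaira--Nakano inequality with torsion (the same tool the paper credits, via Bismut--Vasserot, for the compact analogue Theorem \ref{bkt1.1}), and the Andreotti--Vesentini/Gaffney density argument made available by completeness. The only step stated too quickly is the final spectral claim, which concerns $\square_p$ on degree-zero sections and is not literally contained in \eqref{ell4,1}; it follows by the standard commutation argument, e.g.\ $\norm{\square_p s}^2=\norm{\db^{*}\db s}^2= Q_p(\db s,\db s)\geqslant C_1p\,\langle\square_p s,s\rangle$ for $s$ of degree zero, which forces any nonzero eigenvalue to be at least $C_1p$.
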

Thus we are formally in a similar situation as in the compact case, that is, the higher $L^2$ cohomology groups vanish. But we cannot invoke as in the compact case the index theorem to estimate the dimension of $L^2$ holomorphic sections of $L^p\otimes E$. Instead we can use an analogue of the local index theorem, namely the asymptotics of the Bergman kernel. Let us denote by $\alpha_1,\ldots,\alpha_n$ the eigenvalues of $\frac{\imat}{2\pi} R^L$ with respect to $\Theta$.
\begin{theorem}[{\cite[Cor.\,3.12]{MM08a}}]\label{noncompact}
Under the hypotheses of Theorem \ref{noncompact0} we have
\begin{equation}\label{ell4,3}
P_p(x,x)=p^n\bb_0(x)+\mO(p^{n-1})\,,\quad p\to\infty\,,
\end{equation}
uniformly on compact sets, where $\bb_0=\alpha_1\ldots\alpha_n\Id_E$. Hence
\begin{equation} \label{ell6}
\liminf_{p\longrightarrow\infty}p^{-n}\dim H^0_{(2)}(X,L^p\otimes{E})\geqslant
\frac{\rank(E)}{n!}\int_X\Big(\tfrac{\sqrt{-1}}{2\pi}R^L\Big)^n .
\end{equation}
\end{theorem}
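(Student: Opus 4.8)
The plan is to prove the diagonal expansion \eqref{ell4,3} first and then read off the dimension bound \eqref{ell6}. For \eqref{ell4,3} I would use the localization-and-rescaling method of Section~2.4, whose decisive input is the uniform spectral gap \eqref{ell4,1}. By Theorem~\ref{noncompact0}, $\spec(\square_p)\subset\{0\}\cup[pC_1,\infty[$, so $P_p$ is the spectral projection of $\square_p$ onto its kernel and may be expressed through the functional calculus of $D_p$. Using an even test function with compactly supported Fourier transform together with the finite propagation speed of $\cos(tD_p)$---valid since completeness of $(X,\Theta)$ makes $D_p$ essentially self-adjoint---one shows that $P_p(x,x')$ and all its derivatives are $\mO(p^{-\infty})$ once $d(x,x')\geqslant\varepsilon_0$, uniformly for $x,x'$ in a compact set. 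Hence $P_p(x_0,x_0)$ is determined, up to $\mO(p^{-\infty})$, by the geometry in an arbitrarily small ball around $x_0$, and the computation transplants to $\R^{2n}\cong T_{x_0}X$.

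Next I would fix $x_0$, pass to normal coordinates with a trivialization of $L$ and $E$, and rescale $Z\mapsto Z/\sqrt{p}$. Under this substitution $\tfrac1p\square_p$ converges, in suitable weighted Sobolev norms and uniformly for $x_0$ in a compact set, to the model Kodaira--Laplacian $\cL$ on $\C^n$ built from the constant curvature $R^L_{x_0}$ and acting on $E_{x_0}$-valued functions. This $\cL$ is a harmonic oscillator whose degree-$0$ part has a lowest level separated from the rest of the spectrum; the kernel of the orthogonal projection onto $\Ker\cL$ is an explicit Gaussian whose diagonal value is $\alpha_1\cdots\alpha_n\,\Id_E$, the $\alpha_j$ being the eigenvalues of $\tfrac{\sqrt{-1}}{2\pi}R^L_{x_0}$ relative to $\Theta$. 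Transferring this convergence to the spectral projections and inserting the Jacobian factor $p^n$ of the rescaling yields $P_p(x_0,x_0)=p^n\,\alpha_1\cdots\alpha_n\,\Id_E+\mO(p^{n-1})$ uniformly on compacta, i.e.\ \eqref{ell4,3} with $\bb_0=\alpha_1\cdots\alpha_n\,\Id_E$.

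To obtain \eqref{ell6}, note that $P_p(x,x)$ is a nonnegative endomorphism of $E_x$, so $\tr_E P_p(x,x)\geqslant0$ and, by \eqref{bk2.41} read in $[0,\infty]$, $\int_X\tr_E P_p(x,x)\,dv_X=d_p$; in particular $d_p\geqslant\int_K\tr_E P_p(x,x)\,dv_X$ for every compact $K\subset X$. Taking $\tr_E$ in \eqref{ell4,3} gives $\tr_E\bb_0=\rank(E)\,\alpha_1\cdots\alpha_n$, and since $\sqrt{-1}R^L>\varepsilon\Theta$ forces $\alpha_j>0$ one has $\tr_E\bb_0\,dv_X=\tfrac{\rank(E)}{n!}\bigl(\tfrac{\sqrt{-1}}{2\pi}R^L\bigr)^n$. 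Feeding the uniform expansion on $K$ into this inequality and dividing by $p^n$ yields $\liminf_{p}p^{-n}d_p\geqslant\tfrac{\rank(E)}{n!}\int_K\bigl(\tfrac{\sqrt{-1}}{2\pi}R^L\bigr)^n$; as the integrand is a nonnegative volume form, letting $K$ exhaust $X$ and applying monotone convergence gives \eqref{ell6}.

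The analytic heart is the rescaling step in the non-compact setting: one must bound the rescaled operators and their resolvents uniformly in $p$ and in $x_0$ over compacta and transfer this control to the spectral projections. This is precisely where the spectral gap \eqref{ell4,1} and completeness are indispensable---without a uniform lower bound on the nonzero spectrum the functional calculus would not localize $P_p$, and without completeness the finite-propagation-speed localization would fail.
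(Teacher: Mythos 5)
Your proposal is correct and follows essentially the same route as the paper: the diagonal asymptotics \eqref{ell4,3} are obtained as the leading term of the localization--rescaling expansion of the Bergman kernel (made possible on compacta by the spectral gap of Theorem \ref{noncompact0}), and the dimension bound comes from integrating the nonnegative density $\tr_E P_p(x,x)$ against $dv_X=\Theta^n/n!$. The only cosmetic difference is that the paper deduces \eqref{ell6} by applying Fatou's lemma directly to $p^{-n}\tr_E P_p(x,x)$, whereas you exhaust $X$ by compacta and invoke monotone convergence --- an equivalent argument resting on the same two facts, namely the nonnegativity of $\tr_E P_p(x,x)$ and its locally uniform convergence to $\tr_E\bb_0$.
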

The asymptotics \eqref{ell4,3} are a particular case of the full asymptotic expansion of the Bergman kernel, see Corollary \ref{bkt2.18}. It can be also deduced with the help of $L^2$ estimates of H\"ormander as done by Tian \cite{T90}.
The estimate \eqref{ell6} shows that $\dim H^0_{(2)}(X,L^p\otimes{E})$ has at least polynomial growth of degree $n$. It follows from Fatou's lemma, applied on $X$ with
the measure $\Theta^n/n!$
to the sequence $p^{-n}\tr_E P_p(x,x)$ which converges pointwise to $\tr_E\bb_0$ on $X$.

\subsection{Model situation: Bergman kernel on $\C^n$}\label{toes1}
We introduce here the model operator, a Kodaira-Laplace operator on $\C^n$, and describe explicitly its spectrum.
The expansion of the Bergman and Toeplitz kernels will be expressed in terms of the kernel of the projection on $\Ker(\cL)$. 
Our whole analysis and calculations are based on the Fourier expansion with respect to the eigenfunctions of $\cL$.
    
Let us consider the canonical real coordinates $(Z_1,\dotsc,Z_{2n})$ on
$\R^{2n}$ and the complex coordinates $(z_1,\dotsc,z_n)$ on $\C^n$.
The two sets of coordinates are linked by the relation
$z_j=Z_{2j-1}+\imat Z_{2j}$, $j=1,\dotsc,n$.
We endow $\C^n$ with the Euclidean metric $g^{T\C ^n}$.
The associated K{\"a}hler form on $\C^n$ is
\[
\om=\frac{\sqrt{-1}}{2}\sum_{j=1}^ndz_{j} \wedge d\ov{z}_{j}\,.
\]
We are interested in the space $(L^2(\R^{2n}),\norm{\,\cdot\,}_{L^2})$
of square integrable functions on $\R^{2n}$ with respect to the Lebesgue measure.
We denote by $dZ=dZ_1\cdots dZ_{2n}$ the Euclidean volume form.
For $\alpha=(\alpha_1, \ldots, \alpha_n)\in \N^{n}$,
$z\in \C^{n}$, put $z^\alpha = z_1^{\alpha_1}\cdots z_{n}^{\alpha_{n}}$.


Let $L=\C$ be the trivial holomorphic line bundle
 on $\C^n$ with the canonical section $\mathbf{1}:\C^n\to L$,  $z\mapsto(z,1)$.
Let $h^L$ be the metric on $L$ defined by
\begin{equation}\label{abk2.65}
|\mathbf{1}|_{h^L}(z) := \exp(\textstyle{-\frac{\pi}{2}\sum_{j=1}^n|z_{j}|^2})=\rho (Z)\quad \text{ for } z\in \C^n\,.
\end{equation}
The space of $L^2$-integrable holomorphic sections
of $L$ with respect to $h^L$ and $dZ$
is the classical Segal-Bargmann space of $L^2$-integrable holomorphic functions
with respect to the volume form $\rho \, dZ$.
It is well-known that $\{z^\beta: \beta\in \N^n\}$
forms an orthogonal basis of this space.

To introduce the model operator $\cL$ we set:
\begin{equation}\label{bk2.67}
\begin{split}
&b_i=-2{\frac{\partial}{\partial z
_i}}+ \pi \overline{z}_i\,,\quad
b^{+}_i=2{\frac{\partial}{\partial\overline{z}_i}}+ \pi
z_i\,,\quad     
\cL=\sum_i b_i\, b^{+}_i\,.
\end{split}\end{equation}
We can interpret
the operator $\cL$ in terms of complex geometry.
Let $\overline{\partial}^{L}$ be the Dolbeault operator acting on $\Omega^{0,\bullet}(\C^n,L)$ and let
$\overline{\partial}^{L,*}$ be its adjoint with respect to the $L^2$-scalar product
induced by $g^{T\C^n}$ and $h^L$.
We have the isometry
$\Omega^{0,\bullet}(\C^n,\C)\to\Omega^{0,\bullet}(\C^n,L)$ given by
$\alpha\mapsto  \rho ^{-1}\alpha$.
If
\[
\square^L=\overline{\partial}^{L,*}\overline{\partial}^{L}
+\overline{\partial}^{L}\overline{\partial}^{L,*}
\]
denotes the Kodaira
Laplacian acting on $\Omega^{0,\bullet}(\C^n,L)$, then
\begin{equation*}
\begin{split}
&\rho \,\square^L \rho ^{-1}:\Omega^{0,\bullet}(\C^n, \C)
\to\Omega^{0,\bullet}(\C^n,\C)\,,\\
&\rho \,\square^L \rho ^{-1}=\tfrac{1}{2} \cL+ \sum_{j=1}^n 2\pi
d\ov{z}^j \wedge i_{\tfrac{\partial}{\partial\overline{z}_j}}\,,\\
&\rho \,\square^L \rho ^{-1}\vert_{\Omega^{0,0}}=\tfrac{1}{2} \cL\,.
\end{split}
\end{equation*}
The operator $\cL$ is the complex analogue of the harmonic oscillator,
the operators $b$, $b^+$ are creation and annihilation operators
respectively. Each eigenspace of $\cL$ has infinite dimension,
but we can still give an explicit description.
\begin{theorem}[{\cite[Th.\,4.1.20]{MM07}, \cite[Th.\,1.15]{MM08a}}]\label{bkt2.17}
The spectrum of $\cL$ on $L^2(\R^{2n})$ is given by
\begin{equation}\label{bk2.68}
{\spec}(\cL)=\Big\{ 4\pi |\alpha| \,:\, \alpha\in\N ^n\Big\}\,.
\end{equation}
Each $\lambda\in\spec(\cL)$ is an eigenvalue of infinite multiplicity and
an orthogonal basis of the corresponding eigenspace is given by
\begin{equation}\label{bk2.69}
B_\lambda=\Big\{b^{\alpha}\big(z^{\beta} e^{-\pi\sum_i |z_i|^2/2}\big):
\text{$\alpha\in\N^n$ with $4\pi|\alpha|=\lambda$, $\beta\in\N^n$}\Big\}
\end{equation}
where $b^{\alpha}:=b^{\alpha_1}_1\cdots b^{\alpha_n}_n$. Moreover,
$\bigcup\{B_\lambda:\lambda\in\spec(\cL)\}$
forms a complete orthogonal basis of $L^2(\R^{2n})$.
In particular, an orthonormal basis of
$\Ker (\cL)$ is
\begin{equation}\label{bk2.70}
\Big\{\varphi_\beta(z)=\big(\tfrac{\pi ^{|\beta|}}{\beta!}\big)^{1/2}z^\beta
e^{-\pi\sum_i |z_i|^2/2}\,:\beta\in\N^n\Big\}\,.
\end{equation}
\end{theorem}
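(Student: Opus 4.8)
The plan is to recognize $\cL$ as (half of) a Landau-type Hamiltonian and to expose the underlying $2n$-mode bosonic structure, reducing everything to the standard harmonic oscillator. First I would record that $b_i^+$ is the formal $L^2(\R^{2n})$-adjoint of $b_i$ for the Lebesgue measure (a direct computation, using $(\partial_{z_i})^*=-\partial_{\ov{z}_i}$ and that multiplication by $\ov{z}_i$ is adjoint to multiplication by $z_i$), so that $\cL=\sum_i b_i b_i^+=\sum_i b_i (b_i)^*\geqslant 0$ and $\Ker(\cL)=\bigcap_i\Ker(b_i^+)$. A short commutator calculation then gives $[b_i,b_j]=[b_i^+,b_j^+]=0$ and $[b_i,b_j^+]=-4\pi\delta_{ij}$, whence the crucial relation $[\cL,b_j]=4\pi b_j$: the operator $b_j$ raises $\cL$-eigenvalues by $4\pi$.

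Next I would compute $\Ker(\cL)$ explicitly. Writing $\rho=e^{-\pi\sum_i|z_i|^2/2}$ and $f=g\rho$, the equations $b_i^+f=0$ reduce to $\partial_{\ov{z}_i}g=0$, i.e.\ $g$ holomorphic; hence $\Ker(\cL)$ is the Segal--Bargmann space, with $\{z^\beta\rho\}_{\beta\in\N^n}$ an orthogonal basis. A factorized Gaussian integral gives $\norm{z^\beta\rho}^2=\beta!/\pi^{|\beta|}$, which produces the normalized basis $\varphi_\beta$ and settles $\lambda=0$. Combined with $[\cL,b_j]=4\pi b_j$, each $b^\alpha\varphi_\beta$ is then an eigenvector of eigenvalue $4\pi|\alpha|$, yielding the proposed vectors of $B_\lambda$ for $\lambda=4\pi|\alpha|$.

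The heart of the matter is \emph{completeness}: that these eigenvectors exhaust $L^2(\R^{2n})$ and that $\cL$ has no further spectrum. For this I would introduce the complementary operators $u_i=2\partial_{z_i}+\pi\ov{z}_i$, with adjoints $u_i^*=-2\partial_{\ov{z}_i}+\pi z_i$, and check that $[u_i,u_j^*]=4\pi\delta_{ij}$ while $u_i,u_i^*$ commute with all of $b_j,b_j^+$. Thus $\{b_i,b_i^+\}$ and $\{u_i,u_i^*\}$ realize two independent families of bosonic modes, $2n$ in all, whose common vacuum is $\rho$ (one verifies $b_i^+\rho=u_i\rho=0$ and $u_i^*\rho=2\pi z_i\rho$). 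Consequently $b^\alpha(u^*)^\beta\rho$ is proportional to $b^\alpha(z^\beta\rho)$, and these are precisely the Fock-space basis vectors of the $2n$-mode oscillator. Orthogonality of vectors with distinct $(\alpha,\beta)$ --- hence orthogonality inside each $B_\lambda$ and between distinct eigenspaces --- follows by commuting the annihilation operators $b_i^+,u_i$ to the right and using $b_i^+\rho=u_i\rho=0$, which also shows the vectors are nonzero.

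Finally, to conclude completeness I would observe that the span of $\{b^\alpha(u^*)^\beta\rho\}$ equals $\{p(z,\ov{z})\rho:p\text{ a polynomial}\}$: the leading term of $b^\alpha(u^*)^\beta\rho$ is proportional to $\ov{z}^\alpha z^\beta\rho$ plus lower-degree terms, so the transition to the monomials $z^\beta\ov{z}^\alpha$ is triangular and invertible. Since polynomials times the Gaussian $\rho$ are dense in $L^2(\R^{2n})$ --- equivalently, since the $b^\alpha(u^*)^\beta\rho$ are, up to normalization and a linear change of variables, the Hermite functions in $2n$ real variables, which form a complete orthogonal system --- the span is dense. This simultaneously identifies $\spec(\cL)$ with $\{4\pi|\alpha|:\alpha\in\N^n\}$ (a discrete, hence closed, set, so there is no residual spectrum) and shows each eigenvalue has infinite multiplicity indexed by $\beta$. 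The main obstacle is exactly this density step: the ladder algebra produces the eigenvectors almost for free, but excluding missing spectrum requires the global input that the Hermite system spans $L^2(\R^{2n})$, together with the mild remark that $\cL$, being of harmonic-oscillator type, is essentially self-adjoint, so that its spectrum is the closure of its set of eigenvalues.
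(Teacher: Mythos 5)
Your argument is correct, and it is essentially the proof given in the cited reference [MM07, Th.\,4.1.20] (the survey itself states the theorem without proof): one verifies that $b_i^+=b_i^*$, computes the commutation relations $[b_i,b_j^+]=-4\pi\delta_{ij}$ so that $b_j$ raises $\cL$-eigenvalues by $4\pi$, identifies $\Ker(\cL)$ with the Segal--Bargmann space, and obtains completeness from the density of polynomials times the Gaussian (Hermite functions) in $L^2(\R^{2n})$. Your explicit introduction of the second commuting family $u_i,u_i^*$ is a clean way to package the completeness and orthogonality step, but it is the same Fock-space mechanism; all your computations (adjoints, commutators, the norm $\|z^\beta\rho\|^2=\beta!/\pi^{|\beta|}$, the triangularity of $b^\alpha(u^*)^\beta\rho$ against $\ov{z}^\alpha z^\beta\rho$) check out.
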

Let $\cP:L^2 (\R^{2n})\longrightarrow\Ker(\cL)$ be the orthogonal projection
and let $\cP(Z,Z')$ denote its kernel with respect to $dZ'$.
We call $\cP(\cdot,\cdot)$ the Bergman kernel of $\cL$.
Obviously
$\cP(Z,Z')=\sum_{\beta}\varphi_\beta(z)\,\overline{\varphi}_\beta(z')$ so
we infer from \eqref{bk2.70} that
\begin{equation}\label{toe1.3}
\cP(Z,Z^{\prime}) = \exp\big(-\tfrac{\pi}{2}\textstyle\sum_{i=1}^n
\big(|z_i|^2+|z^{\,\prime}_i|^2 -2z_i\overline{z}_i^{\,\prime}\big)\big)\,.
\end{equation}
\subsection{Asymptotic expansion of Bergman kernel}\label{s3.2}
In Sections \ref{s3.2}-\ref{s3.6} we assume that $(X,\omega)$ is a compact K\"ahler manifold and $(L,h^L)$ is a Hermitian holomorphic line bundle satisfying \eqref{prequantum}. For the sake of simplicity, we suppose that Riemannian metric $g^{TX}$ is the metric associated to $\omega$, that is, $g^{TX}(u,v)=\omega(u,Jv)$ (or, equivalently, $\Theta=\omega$).

\noindent
In order to state the result about the asymptotic expansion we start by describing our identifications and notations.

\noindent
\textbf{\emph{Normal coordinates.\/}}
 Let $a^X$ be the injectivity radius of $(X, g^{TX})$.
We denote by $B^{X}(x,\var)$ and  $B^{T_xX}(0,\var)$ the open balls
 in $X$ and $T_x X$ with center $x$ and radius $\var$, respectively.
 Then the exponential map $ T_x X\ni Z \to \exp^X_x(Z)\in X$ is a
 diffeomorphism from $B^{T_xX} (0,\var)$ onto $B^{X} (x,\var)$ for
 $\var\leqslant a^X$.  {}From now on, we identify $B^{T_xX}(0,\var)$
 with $B^{X}(x,\var)$ via the exponential map for $\var \leqslant a^X$.
 Throughout what follows, $\varepsilon$ runs in the
fixed interval $]0, a^X/4[$.


\noindent
\textbf{\emph{Basic trivialization.\/}}
 We fix $x_0\in X$.
 For $Z\in B^{T_{x_0}X}(0,\var)$ we identify
 $(L_Z, h^L_Z)$, $(E_Z, h^E_Z)$ and $(L^p\otimes E)_Z$
 to $(L_{x_0},h^L_{x_0})$, $(E_{x_0},h^E_{x_0})$ and $(L^p\otimes E)_{x_0}$
 by parallel transport with respect to the connections
 $\nabla ^L$, $\nabla ^E$ and $\nabla^{L^p\otimes E}$ along the curve
 \[\gamma_Z :[0,1]\ni u \to \exp^X_{x_0} (uZ)\,.\]
This is the basic trivialization we use in this paper.

Using this trivialization we identify $f\in \cC^\infty(X,\End(E))$ to a family
$\{f_{x_0}\}_{x_0\in X}$ where $f_{x_0}$ is the function $f$ in
normal coordinates near $x_0$, i.e.,
\[
f_{x_0}:B^{T_{x_0}X}(0,\var)\to\End(E_{x_0}),\quad
f_{x_0}(Z)=f\circ\exp^X_{x_0}(Z)\,.
\]
In general, for functions in the normal coordinates,
we will add a subscript $x_0$ to indicate the base point $x_0\in X$.
Similarly,
$P_p(x,x')$ induces in terms of the basic trivialization a smooth section
$(Z,Z')\mapsto P_{p,\,x_0}(Z,Z')$
of $\pi ^* \End(E)$ over $\{(Z,Z')\in TX\times_{X} TX:|Z|,|Z'|<\var\}$,
which depends smoothly on $x_0$. Here we identify a section
$S\in \cC^\infty \big(TX\times_{X}TX,\pi ^* \End (E)\big)$
 with the family $(S_x)_{x\in X}$, where
 $S_x=S|_{\pi^{-1}(x)}$. 

\noindent
\textbf{\emph{Coordinates on $T_{x_0}X$.\/}}
Let us choose
an orthonormal basis $\{ w_i\}_{i=1}^n$ of $T^{(1,0)}_{x_0} X$.
Then $e_{2j-1}=\tfrac{1}{\sqrt{2}}(w_j+\overline{w}_j)$ and
$e_{2j}=\tfrac{\sqrt{-1}}{\sqrt{2}}(w_j-\overline{w}_j)$, $j=1,\dotsc,n\, $
form an orthonormal basis of $T_{x_0}X$.
We use coordinates on $T_{x_0}X\simeq\R^{2n}$
given by the identification
\begin{equation}\label{n11}
\R^{2n}\ni (Z_1,\ldots, Z_{2n}) \longmapsto \sum_i
Z_i e_i\in T_{x_0}X.
\end{equation}
In what follows we also use complex coordinates $z=(z_1,\ldots,z_n)$
on $\C^n\simeq\R^{2n}$.

\noindent
\textbf{\emph{Volume form on $T_{x_0}X$.\/}}
Let us denote by $dv_{TX}$ the Riemannian volume form
on $(T_{x_0}X, g^{T_{x_0}X})$, there exists
a smooth positive function $\kappa_{x_0}:T_{x_0}X\to\R$, satisfying
 \begin{equation} \label{atoe2.7}
 dv_X(Z)= \kappa_{x_0}(Z) dv_{TX}(Z),\quad \kappa_{x_0}(0)=1,
 \end{equation}
 where the subscript $x_0$ of
 $\kappa_{x_0}(Z)$ indicates the base point $x_0\in X$.

\noindent
\textbf{\emph{Sequences of operators.\/}}
Let $\Theta_p: L^2(X,L^p\otimes E)\longrightarrow L^2(X,L^p\otimes E)$
be a sequence of continuous linear  operators with smooth kernel
$\Theta_p(\cdot,\cdot)$ with respect to $dv_X$ (e.g.\,$\Theta_p=T_{f,\,p}$).
 Let $\pi : TX\times_{X} TX \to X$ be the natural projection from the
 fiberwise product of $TX$ on $X$.
In terms of our basic trivialization, $\Theta_p(x,y)$ induces
a family of smooth sections
$Z,Z'\mapsto \Theta_{p,\,x_0}(Z,Z^\prime)$
of $\pi^*\End(E)$ over $\{(Z,Z')\in TX\times_{X} TX:|Z|,|Z'|<\var\}$,
which depends smoothly on $x_0$. 

We denote by $\abs{\Theta_{p,\,x_0}(Z,Z^\prime)}_{\cC^l(X)}$
the $\cC^{l}$ norm with respect to the parameter $x_0\in X$. We say that
\[\Theta_{p,\,x_0}(Z,Z^\prime)=\mO(p^{-\infty})\,,\quad p\to\infty\] if
for any $l,m\in \N$, there exists $C_{l,m}>0$ such that
$\abs{\Theta_{p,\,x_0}(Z,Z^\prime)}_{\cC^{m}(X)}\leqslant C_{l,m}\, p^{-l}$.

The asymptotics of the Bergman kernel will be described in terms of the
Bergman kernel $\cP_{x_0}(\cdot,\cdot)=\cP(\cdot,\cdot)$ of the model operator $\cL$ on $T_{x_0}X\cong \R^{2n}$.
Recall that $\cP(\cdot,\cdot)$ was defined in \eqref{toe1.3}.
\begin{notation}\label{noe2.7}
Fix $k\in\N$ and $\var^\prime\in\,]0,a^X[$\,.
Let \[\{Q_{r,\,x_0}\in \End(E)_{x_0}[Z,Z^{\prime}]\,:\,0\leqslant r\leqslant k,\,x_0\in X\}\]  be a family
of polynomials
in $Z,Z^ \prime$, which is smooth with respect to
the parameter $x_0\in X$. We say that
\begin{equation} \label{toe2.7}
p^{-n} \Theta_{p,\,x_0}(Z,Z^\prime)\cong \sum_{r=0}^k
(Q_{r,\,x_0} \cP_{x_0})(\sqrt{p}Z,\sqrt{p}Z^{\prime})p^{-r/2}
+\mO(p^{-(k+1)/2})\,,
\end{equation}
on $\{(Z,Z^\prime)\in TX\times_X TX:\abs{Z},\abs{Z^{\prime}}<\var^\prime\}$
if there exist $C_0>0$ and a decomposition
\begin{equation} \label{toe2.71}
\begin{split}
p^{-n} \Theta_{p,x_0}(Z,Z^\prime)&-\sum_{r=0}^k
(Q_{r,\,x_0} \cP_{x_0})(\sqrt{p}Z,\sqrt{p}Z^{\prime})\kappa_{x_0}^{-1/2}(Z)\kappa_{x_0}^{-1/2}(Z')p^{-r/2}\\
&=\Psi_{p,\,k,\,x_0}(Z,Z^\prime)+\mO(p^{-\infty})\,,
\end{split}
\end{equation}
where $\Psi_{p,\,k,\,x_0}$ satisfies the following estimate:
for every $l\in\N$ there exist $C_{k,\,l}>0$, $M>0$ such that for all
$p\in\N^{*}$
\begin{equation}
|\Psi_{p,\,k,\,x_0}(Z,Z^\prime)|_{\cC^l(X)}\leqslant  \,C_{k,\,l}\,p^{-(k+1)/2}
(1+\sqrt{p}\,|Z|+\sqrt{p}\,|Z^{\prime}|)^M \,
e^{-C_0\,\sqrt{ p}\,|Z-Z^{\prime}|}\,,
\end{equation}
on
$\{(Z,Z^\prime)\in TX\times_X TX:\abs{Z},\abs{Z^{\prime}}<\var^\prime\}$.
 \end{notation}

\noindent
\textbf{\emph{The sequence $P_p$\,.\/}}
We can now state the asymptotics of the Bergman kernel.
First we observe that the Bergman kernel
decays very fast outside the diagonal of $X\times X$.

Let ${\mathbf{f}} : \R \to [0,1]$ be a smooth even function such that
${\mathbf{f}}(v)=1$ for $|v| \leqslant  \var/2$, 
and ${\mathbf{f}}(v) = 0$ for $|v| \geqslant \var$. Set
\begin{equation} \label{0c3}
F(a)= \Big(\int_{-\infty}^{+\infty}{\mathbf{f}}(v) dv\Big)^{-1} 
\int_{-\infty}^{+\infty} e ^{i v a}\, {\mathbf{f}}(v) dv.
\end{equation}
Then $F(a)$ is an even function and lies in the Schwartz space 
$\mathcal{S} (\R)$ and $F(0)=1$.

\noindent
We have the \emph{far
off-diagonal} behavior of the Bergman kernel: 
\begin{theorem}[{\cite[Prop.\,4.1]{DLM04a}}]\label{tue16}
For any $l,m\in\N$ and $\var>0$, there exists a positive constant $C_{l,m,\var}>0$ 
such that for any $p\geqslant 1$, $x,x'\in X$, the following estimate holds:
\begin{equation}\label{0c7}
\left|F(D_p)(x,x') 
- P_{p}(x,x')\right|_{\cC ^m(X\times X)}
\leqslant C_{l,m,\var} p^{-l}.
\end{equation}
Especially, 
\begin{equation}\label{toe2.6a}
|P_p(x,x')|_{\cC^m(X\times X)} \leqslant C_{l,m,\var}\, p^{-l}\,,\quad \text{on $\{(x,x')
\in X\times X: d(x,x') \geqslant \var\}$} \,.
\end{equation}
The $\cC ^m$ norm in \eqref{0c7} and \eqref{toe2.6a} is induced by
$\nabla^L$, $\nabla^E$, $h^L$, $h^E$ and $g^{TX}$.
\end{theorem}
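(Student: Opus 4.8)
The plan is to exploit two properties of the Dirac-type operator $D_p$: the spectral gap of Theorem \ref{bkt1.1}, and the finite propagation speed of its wave operator. The role of $F$ is precisely that, since $\mathbf{f}$ is even and supported in $[-\var,\var]$, functional calculus gives
\[
F(D_p)=\Big(\int_{-\infty}^{+\infty}\mathbf{f}(v)\,dv\Big)^{-1}\int_{-\var}^{\var}\mathbf{f}(v)\cos(vD_p)\,dv\,,
\]
the $\sin$-part dropping out by parity. The factor $\sqrt{2}$ in \eqref{lm2.1} normalises the principal symbol so that $D_p^2$ has symbol $\abs{\xi}^2$; hence $\cos(vD_p)$, the solution operator of the wave equation $\partial_v^2+D_p^2$, propagates at unit speed and its Schwartz kernel is supported in $\{d(x,x')\leqslant\abs{v}\}$. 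Integrating over $\abs{v}<\var$ shows $F(D_p)(x,x')=0$ whenever $d(x,x')\geqslant\var$. Granting the first inequality, the ``Especially'' part is then immediate, since on $\{d(x,x')\geqslant\var\}$ we have $P_p(x,x')=P_p(x,x')-F(D_p)(x,x')$, which is $\mO(p^{-l})$.

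It remains to prove $\abs{F(D_p)(x,x')-P_p(x,x')}_{\cC^m}\leqslant C_{l,m,\var}\,p^{-l}$, and first I would establish this at the level of operators. Since $F(0)=1$ and $P_p$ is the orthogonal projection onto $\Ker(D_p)=\Ker(\square_p)$ (which for $p\geqslant p_0$ equals $H^0(X,L^p\otimes E)$ by Theorem \ref{bkt1.2}), the spectral theorem gives $F(D_p)-P_p=F(D_p)(\Id-P_p)$, a function of $\square_p=\tfrac12 D_p^2$ supported on the part of the spectrum where $\abs{\lambda}\geqslant\nu_p:=\sqrt{2C_0p-C_L}$, by the gap \eqref{bk1.41}. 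As $F\in\mathcal{S}(\R)$, for every $N$ there is $C_N$ with $\abs{F(\lambda)}\leqslant C_N(1+\abs{\lambda})^{-N}$, so for any fixed $k,k'\in\N$,
\[
\norm{(\Id+\square_p)^{k}\bigl(F(D_p)-P_p\bigr)(\Id+\square_p)^{k'}}\leqslant\sup_{\abs{\lambda}\geqslant\nu_p}\bigl(1+\tfrac12\lambda^2\bigr)^{k+k'}\abs{F(\lambda)}\leqslant C_{N,k,k'}\,\nu_p^{\,2(k+k')-N}\,,
\]
which is $\mO(p^{-\infty})$ once $N$ is taken large relative to $k+k'$. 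I use even functions of $D_p$ (powers of $\Id+\square_p$) precisely so that the operators preserve $\Omega^{0,0}$ and the sandwiching makes sense.

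The final, and genuinely technical, step is to pass from these operator bounds to a $\cC^m$ estimate on the kernels. The plan is to invoke the standard kernel--Sobolev lemma on $X\times X$: the $\cC^m(X\times X)$ norm of the kernel of a smoothing operator $A$ is dominated by $\norm{(\Id+\square_p)^{k}A(\Id+\square_p)^{k'}}$ for $k,k'$ large enough (of order $m+n$), because $(\Id+\square_p)^{k}$ interchanges $L^2$ with the relevant Sobolev space and then Sobolev embedding takes over. Applying this to $A=F(D_p)-P_p$ with the bounds above gives the claim. \emph{The hard part will be controlling the $p$-dependence.} Since $\square_p$ has the same principal symbol as $\square_1$ but a zeroth-order part growing linearly in $p$, the comparison between the $\square_p$-Sobolev norms and fixed Sobolev norms, and the attendant elliptic estimates for $D_p$, cost polynomial factors $p^{C}$; one must verify that these factors are at worst polynomial and uniform in the base point $x_0\in X$. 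This uniformity is harmless because the spectral-gap estimate already produces decay faster than any power of $p$, but it is the uniform control of the elliptic constants, rather than any single inequality, that constitutes the real content of the proof.
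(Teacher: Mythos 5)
Your proposal is correct and follows essentially the same route as the paper (and the cited reference of Dai--Liu--Ma): the spectral gap plus the Schwartz decay of $F$ give the operator estimate, Sobolev embedding with $p$-dependent elliptic constants upgrades it to the $\cC^m$ kernel estimate \eqref{0c7}, and finite propagation speed of $\cos(vD_p)$ kills $F(D_p)(x,x')$ off the $\var$-neighbourhood of the diagonal, yielding \eqref{toe2.6a}. You also correctly identify the uniform polynomial control of the elliptic constants as the genuinely technical point.
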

Next we formulate the \emph{near off-diagonal} expansion of the Bergman kernel.
\begin{theorem}[{\cite[Th.\,\,4.18$^\prime$]{DLM04a}}]\label{tue17}
There exist polynomials $J_{r,\,x_{0}}\in \End(E)_{x_0}[Z,Z']$
in $Z,Z'$ with the same parity as $r$, such that
for any $k\in \N$, $\varepsilon\in]0, a^X/4[$\,,
we have
\begin{equation} \label{toe2.9}
p^{-n} P_{p,\,x_0}(Z,Z^\prime)\cong \sum_{r=0}^k
(J_{r,\,x_0} \cP_{x_0})(\sqrt{p}Z,\sqrt{p}Z^{\prime})p^{-\frac{r}{2}}
+\mO(p^{-\frac{k+1}{2}})\,,
\end{equation}
on the set $\{(Z,Z^\prime)\in TX\times_X TX:\abs{Z},\abs{Z^{\prime}}<2\var\}$,
in the sense of Notation \ref{noe2.7}.
\end{theorem}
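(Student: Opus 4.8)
The plan is to prove Theorem~\ref{tue17} by the analytic localization and rescaling method of Bismut--Lebeau \cite{BL91}, in the form adapted to the Bergman kernel in \cite{DLM04a} and \cite{MM07}. The two global inputs are the spectral gap (Theorem~\ref{bkt1.1}) and the far off-diagonal decay (Theorem~\ref{tue16}), while Section~\ref{toes1} furnishes the leading kernel $\cP_{x_0}$. First I would localize. By \eqref{0c7} we have $P_p = F(D_p) + \mO(p^{-\infty})$ in every $\cC^m$-norm, and since $\mathbf{f}$ is supported in $[-\var,\var]$, finite propagation speed makes $F(D_p)(x,x')$ vanish for $d(x,x') > \var$ and depend only on the geometry of $(X, g^{TX}, L, h^L, E, h^E)$ in a fixed neighborhood of $x_0$. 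Hence I may replace the compact manifold $X$ by $\R^{2n} \cong T_{x_0}X$, transplanting all data through the basic trivialization and glueing in the model structure of Section~\ref{toes1} outside a fixed ball; the resulting Bergman kernel agrees with $P_{p,\,x_0}$ near the origin modulo $\mO(p^{-\infty})$.

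Next comes the rescaling. Set $t = p^{-1/2}$ and define the rescaled kernel $P_t(Z,Z') := p^{-n} P_{p,\,x_0}(Z/\sqrt{p}, Z'/\sqrt{p})$, so that the assertion \eqref{toe2.9} is equivalent to the Taylor statement $P_t = \sum_{r=0}^k (J_{r,\,x_0}\cP_{x_0})\,t^r + \mO(t^{k+1})$. Here $P_t$ is the Bergman projection of the operator $\mathcal{L}_t$ obtained by conjugating the localized Kodaira--Laplacian by the dilation $Z \mapsto tZ$ and the trivialization of $L^p$. As $t \to 0$ one checks $\mathcal{L}_t \to \mathcal{L}_0 = \cL$, the model operator of Section~\ref{toes1}, whence $P_0 = \cP$ and $J_{0,\,x_0} = \Id_{E_{x_0}}$. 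Taylor-expanding the coefficients of $\mathcal{L}_t$ at $t = 0$ yields $\mathcal{L}_t = \mathcal{L}_0 + \sum_{i\geqslant 1} t^i\,\mathcal{O}_i$, where each $\mathcal{O}_i$ is a second-order operator with polynomial coefficients (the Taylor data of the metric, connection and curvatures at $x_0$), and where $Z \mapsto -Z$ multiplies $\mathcal{O}_i$ by $(-1)^i$.

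I would then represent the projection by a contour integral. Rescaling preserves the gap of Theorem~\ref{bkt1.1}, so there is a circle $\delta$ around $0$ separating it from $\spec(\mathcal{L}_t) \setminus \{0\}$ uniformly for small $t$, and
\[
P_t = \frac{1}{2\pi\imat} \oint_\delta (\lambda - \mathcal{L}_t)^{-1}\, d\lambda.
\]
Expanding the resolvent,
\[
(\lambda - \mathcal{L}_t)^{-1} = (\lambda - \mathcal{L}_0)^{-1} + \sum_{j\geqslant 1} (\lambda - \mathcal{L}_0)^{-1} \Big( \sum_{i\geqslant 1} t^i\, \mathcal{O}_i\, (\lambda - \mathcal{L}_0)^{-1} \Big)^{\!j},
\]
and collecting the coefficient of $t^r$ produces $J_{r,\,x_0}\cP_{x_0}$: the finitely many terms $(\lambda - \mathcal{L}_0)^{-1}\mathcal{O}_{i_1}\cdots \mathcal{O}_{i_s}(\lambda - \mathcal{L}_0)^{-1}$ applied to the Gaussian kernel $\cP$ give polynomial $\times$ Gaussian, and the parity rule for the $\mathcal{O}_i$ forces $J_{r,\,x_0}$ to have the same parity as $r$.

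Finally, to promote this formal expansion to the $\cC^l$ estimate with the weight $e^{-C_0\sqrt{p}\,|Z-Z'|}$ of Notation~\ref{noe2.7}, I would establish uniform-in-$t$ resolvent bounds in the weighted Sobolev spaces adapted to $\mathcal{L}_t$: the quantitative gap gives uniform ellipticity and invertibility of $\lambda - \mathcal{L}_t$ on $\delta$, while the harmonic-oscillator structure of $\mathcal{L}_0$ supplies Gaussian off-diagonal decay of the kernels; Sobolev embedding converts these into pointwise $\cC^l$ bounds, with smooth dependence on $x_0$ since all Taylor data vary smoothly. The main obstacle is precisely this last step: constructing the correct scale of rescaled, weighted Sobolev norms in which $\mathcal{L}_t$ is uniformly controlled and the remainder after $k$ terms is genuinely $\mO(t^{k+1})$ with the stated exponential weight. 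This is the technical heart of the Bismut--Lebeau method and the only place where the spectral gap is used quantitatively rather than merely qualitatively.
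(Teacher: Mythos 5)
Your proposal follows essentially the same route as the paper's own (sketched) argument: localization via the spectral gap of Theorem~\ref{bkt1.1} and finite propagation speed, transplantation to $T_{x_0}X\cong\R^{2n}$, the rescaling \eqref{bk2.21} with $t=p^{-1/2}$ and Taylor expansion \eqref{bk2.22} of $\cL_t$, and the contour-integral/resolvent expansion \eqref{bk2.23}--\eqref{bk2.77} identifying the coefficients $J_{r,\,x_0}\cP_{x_0}$, with the parity statement coming from the parity of the $\mO_i$. You also correctly identify, and honestly flag, the technical heart that the paper itself delegates to \cite{BL91,DLM04a,MM07}, namely the uniform weighted Sobolev estimates giving the $\cC^l$ remainder bound with the Gaussian-type weight of Notation~\ref{noe2.7}.
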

Let us briefly explain the idea of the proof for Theorems \ref{tue16}\,--\,\ref{tue17}.   Using the spectral gap property from Theorem \ref{bkt1.1}, 
we obtain \eqref{0c7}. By  finite propagation speed of solutions of hyperbolic equations,
we obtain that $F(D_p)(x,x')=0$ if $d(x,x') \geqslant \var$ and $F(D_p)(x,\cdot)$ depends only on the restriction $D_p|_{B(x,\varepsilon)}$, so \eqref{toe2.6a} follows. This shows that we can localize the asymptotics
of $P_p(x_0,x')$ in the neighborhood of $x_0$.
By pulling back all our objects by the exponential map to the tangential space and suitably extending them we can work on $\R^{2n}$. Thus we can use the explicit description of the Bergman kernel of the model operator $\cL$ given in Section \ref{toes1}.
To conclude the proof, we combine the spectral gap property,
the rescaling of the coordinates and functional analytic
techniques inspired by Bismut-Lebeau \cite[\S 11]{BL91}.

By setting $\bb_r(x_0)=(J_{2r,\,x_0}\cP_{x_0})(0,0)$, we get
from \eqref{toe2.9} the following diagonal expansion of the Bergman kernel.
\begin{corollary}\label{bkt2.18}
For any $k,l\in \N$, there exists $C_{k,l}>0$
such that for any $p\in \N$,
\begin{equation}\label{bk2.6}
\Big |P_p(x,x)- \sum_{r=0}^{k} \bb_r(x) p^{n-r}
\Big |_{\cC^l(X)} \leqslant C_{k,\,l}\, p^{n-k-1}\,,
\end{equation}
where $\bb_0(x)=\Id_E$.
\end{corollary}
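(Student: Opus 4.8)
The plan is to specialize the near off-diagonal expansion of Theorem \ref{tue17} to the diagonal $Z=Z'=0$ and to exploit the parity of the polynomials $J_{r,\,x_0}$. First I would note that under the basic trivialization the value $P_{p,\,x_0}(0,0)$ is exactly $P_p(x_0,x_0)\in\End(E)_{x_0}$, because the parallel transport along the constant curve $\gamma_0$ is the identity on the fiber at $x_0$. I would then invoke \eqref{toe2.9} with $k$ replaced by $2k+1$ and evaluate at $Z=Z'=0$. Since $\kappa_{x_0}(0)=1$ by \eqref{atoe2.7} and $\cP_{x_0}(0,0)=1$ by \eqref{toe1.3}, each explicit term collapses to the constant $(J_{r,\,x_0}\cP_{x_0})(0,0)=J_{r,\,x_0}(0,0)$, and the weight $\kappa_{x_0}^{-1/2}(Z)\kappa_{x_0}^{-1/2}(Z')$ in Notation \ref{noe2.7} becomes $1$.

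The key step is the parity observation. Since $J_{r,\,x_0}$ has the same parity as $r$, every odd-index polynomial is an odd function of $(Z,Z')$ and hence vanishes at the origin, so $(J_{2s+1,\,x_0}\cP_{x_0})(0,0)=0$. Only the even-index terms survive, and after relabelling $r=2s$ the sum reduces to $\sum_{s=0}^{k}\bb_s(x_0)\,p^{-s}$ with $\bb_s(x_0)=(J_{2s,\,x_0}\cP_{x_0})(0,0)$, precisely as defined in the statement. The normalization $\bb_0=\Id_E$ follows because the leading polynomial $J_{0,\,x_0}$ equals $\Id_E$, which is forced by the fact that the top-order term of the rescaled kernel is the model Bergman kernel $\cP_{x_0}$ tensored with the identity on $E$.

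It remains to control the remainder. By Notation \ref{noe2.7} applied with truncation order $2k+1$, the difference between $p^{-n}P_{p,\,x_0}(0,0)$ and the explicit sum equals $\Psi_{p,\,2k+1,\,x_0}(0,0)+\mO(p^{-\infty})$. Evaluating the $\Psi$-estimate at $Z=Z'=0$ sets both the polynomial weight $(1+\sqrt{p}\,|Z|+\sqrt{p}\,|Z'|)^M$ and the exponential factor $e^{-C_0\sqrt{p}\,|Z-Z'|}$ equal to $1$, so that $|\Psi_{p,\,2k+1,\,x_0}(0,0)|_{\cC^l(X)}\leqslant C_{k,l}\,p^{-(k+1)}$. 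Multiplying through by $p^n$ and absorbing the $\mO(p^{-\infty})$ term then gives the claimed bound $|P_p(x,x)-\sum_{r=0}^{k}\bb_r(x)\,p^{n-r}|_{\cC^l(X)}\leqslant C_{k,l}\,p^{n-k-1}$. I expect no genuine analytic obstacle: the localization, rescaling, and construction of the $J_{r,\,x_0}$ together with their parity are all contained in Theorem \ref{tue17}. The only point requiring care is the bookkeeping — choosing the truncation $2k+1$ so that the surviving error is of order $p^{-(k+1)}$, and checking that the $\cC^l$-norm in the parameter $x_0$ commutes with the evaluation at the origin, which it does because the estimate in Notation \ref{noe2.7} is uniform in $x_0$.
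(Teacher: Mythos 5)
Your proposal is correct and follows exactly the route the paper takes: Corollary \ref{bkt2.18} is obtained by setting $Z=Z'=0$ in the near off-diagonal expansion \eqref{toe2.9} of Theorem \ref{tue17} and defining $\bb_r(x_0)=(J_{2r,\,x_0}\cP_{x_0})(0,0)$, the odd-index terms dropping out by the parity of $J_{r,\,x_0}$. Your additional bookkeeping (truncating at order $2k+1$, using $\kappa_{x_0}(0)=1$ and the $\Psi$-estimate of Notation \ref{noe2.7}) simply makes explicit what the paper leaves implicit.
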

The existence of the expansion \eqref{bk2.6} and the form of the leading term
was proved by \cite{T90,Catlin99,Ze98}.

The calculation of the coefficients $\bb_r$ is of great importance.
For this we need $J_{r,\,x_{0}}$, which are obtained by computing
the operators $\cF_{r,\,x_{0}}$ defined by the smooth kernels
\begin{align}\label{bk2.24}
    \cF_{r,\,x_{0}}(Z,Z')= J_{r,\,x_{0}}(Z,Z')\cP(Z,Z')
\end{align}
with respect to $dZ'$. Our strategy (already used in \cite{MM07,MM08a})
is to rescale the Kodaira-Laplace operator, take the Taylor expansion
of the rescaled operator and apply resolvent analysis.

\noindent
\textbf{\emph{Rescaling $\Box_p$ and Taylor expansion.\/}}
For  $s \in \cC^{\infty}(\R^{2n}, E_{x_0})$, $Z\in \R^{2n}$,
 $|Z|\leq 2\var$, and
for $t=\frac{1}{\sqrt{p}}$, set
\begin{align}\label{bk2.21}
\begin{split}
&(S_{t} s ) (Z) :=s (Z/t),  \\
&  \cL_{t}:= S_t^{-1} \kappa^{1/2}\, t^2 (2\, \Box_{p})\kappa^{-1/2} S_t\,.
\end{split}\end{align}
Then by \cite[Th.\,4.1.7]{MM07},  there exist
second order differential operators $\mO_{r}$
such that we have an asymptotic expansion in $t$ when $t\to 0$,
\begin{align}\label{bk2.22}
    \cL_{t} = \cL_{0} + \sum_{r=1}^{m} t^r \mO_{r} + \cO(t^{m+1}).
\end{align}
\noindent
From \cite[Th.\,\,4.1.21,\,4.1.25]{MM07}, we obtain
\begin{align}\label{bk2.30}
    \cL_0=& \sum_j b_jb^+_j=\cL, \qquad \mO_1=0.
\end{align}

\noindent
\textbf{\emph{Resolvent analysis.\/}}
We define by recurrence
$f_r(\lambda)\in \End(L^2(\R^{2n}, E_{x_0}))$ by
\begin{align}\label{bk2.23}
f_{0}(\lambda) = (\lambda -\cL_0)^{-1},
\quad f_r(\lambda)=(\lambda -\cL_0)^{-1}
 \sum_{j=1}^{r} \mO_j   f_{r-j}(\lambda).
\end{align}
Let $\delta$ be the counterclockwise
oriented circle in $\C$ of center $0$ and radius $\pi/2$.
Then by \cite[(1.110)]{MM08a} (cf. also \cite[(4.1.91)]{MM07})
\begin{equation}\label{bk2.77}
\cF_{r,\,x_{0}}= \frac{1}{2\pi \sqrt{-1}} \int_{\delta}  f_r (\lambda)d \lambda.
\end{equation}

Since the spectrum of $\cL$ is well understood we can calculate
the coefficients $\cF_{r,\,x_{0}}$. Set $\cP^\bot= \Id - \cP$.
From Theorem \ref{bkt2.17}, \eqref{bk2.30} and \eqref{bk2.77}, we get
\begin{align}\label{bk2.31}
    \begin{split}
	\cF_{0,\,x_{0}}=& \cP,  \quad \cF_{1,\,x_{0}}= 0,\\
\cF_{2,\,x_{0}}=&- \cL^{-1} \cP^\bot\mO_2 \cP
- \cP \mO_2\cL^{-1}\cP^\bot,\\
\cF_{3,\,x_{0}}=&- \cL^{-1} \cP^\bot\mO_3\cP
- \cP \mO_3\cL^{-1}\cP^\bot,
\end{split}\end{align}
and
\begin{align}
\label{bk2.32}
    \begin{split}   \cF_{4,\,x_{0}}&=\cL^{-1}\cP^\bot \mO_{2}\cL^{-1}\cP^\bot \mO_{2} \cP
- \cL^{-1}\cP^\bot \mO_{4} \cP\\
&\quad+ \cP\mO_{2}\cL^{-1}\cP^\bot \mO_{2}\cL^{-1}\cP^\bot
- \cP \mO_{4}\cL^{-1}\cP^\bot \\
&\quad+ \cL^{-1}\cP^\bot \mO_{2} \cP \mO_{2} \cL^{-1}\cP^\bot
-  \cP \mO_{2} \cL^{-2} \cP^\bot  \mO_{2} \cP\\
&\quad- \cP\mO_{2} \cP \mO_{2} \cL^{-2}\cP^\bot
- \cP^\bot \cL^{-2}\mO_{2}  \cP \mO_{2} \cP.
\end{split}\end{align}
In particular, the first two identities of \eqref{bk2.31} imply
\begin{align}\label{bk2.33}
J_{0,\,x_{0}}= 1, \quad J_{1,\,x_{0}}=0.
\end{align}


In order to formulate the formulas for $\bb_1$ and $\bb_2$ we introduce now more notations.
Let $\nabla^{TX}$ be the Levi-Civita connection on $(X, g^{TX})$. We denote by
$R^{TX}=(\nabla^{TX})^2$ the curvature, by $\Ric$ the Ricci curvature and 
by $r^X$ the scalar curvature of $\nabla^{TX}$. 

We still denote by $\nabla ^{E}$  the connection on
 $\End (E)$ induced by $\nabla ^E$.
Consider 
the (positive) Laplacian $\Delta$ 
acting on the functions on $(X, g^{TX})$ and the 
Bochner Laplacian $\Delta^{E}$
on $\cC^{\infty}(X, E)$ and on $\cC^{\infty}(X, \End(E))$. 
Let $\{e_k\}$ be a (local) orthonormal frame of $(TX, g^{TX})$. Then
\begin{align} 
    \Delta^{E} = - \sum_k (\nabla^{E}_{e_{k}}\nabla^{E}_{e_{k}}
- \nabla^{E}_{\nabla^{TX}_{e_{k}}e_{k}}).
\end{align}

Let $\Omega^{q,\,r}(X, \End(E))$ be the space of $(q,r)$-forms on $X$ 
with values in $\End(E)$, and let
\begin{align} \label{abk2.5}
\nabla^{1,0}:\Omega^{q,\bullet}(X, \End(E))\to \Omega^{q+1,\bullet}(X,\End(E))
\end{align}
 be the $(1,0)$-component of the connection $\nabla^E$.
 Let $(\nabla^{E})^*$, $\nabla^{1,0*}, \ov{\partial}^{E*}$
  be the adjoints of $\nabla^{E}$,  $\nabla^{1,0}, \ov{\partial}^{E}$,
 respectively.
Let $D^{1,0}, D^{0,1}$ be the $(1,0)$ and $(0,1)$ components of the
 connection
$\nabla^{T^{*}X}: \cC^\infty(X,T^* X)\to \cC^\infty(X,T^* X\otimes T^*X)$
induced by $\nabla^{TX}$.
In the following, we denote by 
\[
\langle\cdot\,,\cdot \rangle_{\om}:
\Omega^{\bullet,\,\bullet}(X,\End(E))\times\Omega^{\bullet,\,\bullet}(X,\End(E))
\to\cC^\infty(X,\End(E))
\]
the $\C$-bilinear pairing
$\langle\alpha\otimes f,\beta\otimes g \rangle_{\om}
=\langle\alpha,\beta\rangle f\cdot g$, 
for forms $\alpha,\beta\in\Omega^{\bullet,\,\bullet}(X)$ and sections 
$f,g\in\cC^\infty(X,\End(E))$. Put 
\begin{align} \label{bk2.5}
\begin{split}
&R^E_{\Lambda}
=\left \langle R^E, \om\right \rangle_{\om}\, .
\end{split}\end{align}
Let $\Ric_\om= \Ric(J\cdot,\cdot)$
be the $(1,1)$-form associated to $\Ric$.
Set 
\[
 |\Ric_\om|^2 = \sum_{i<j}\Ric_\om(e_{i},e_{j})^{2}\,,
 \quad |R^{TX}|^2 = \sum_{i<j}\sum_{k<l} \langle 
R^{TX}(e_{i},e_{j})e_{k},e_{l}\rangle ^{2},
\]

\begin{theorem}\label{toet4.5}
We have
     \begin{equation}\label{toe4.131}
\bb_{1} = \frac{1}{8\pi}r^X + \frac{\imat}{2\pi}R^E_{\Lambda}\,,
\end{equation}

\begin{equation}\label{toe4.132}
\begin{split}
\pi^2 \bb_{2}&=  - \frac{\Delta r^X}{48}+ \frac{1}{96}|R^{TX}|^2
- \frac{1}{24} |\Ric_\omega|^2 +  \frac{1}{128} (r^X)^2\\
&+\frac{\sqrt{-1}}{32} \Big(2 r^X R^E_{\Lambda} 
- 4 \langle\Ric_\om, R^ E\rangle_{\omega} + \Delta^E  R^E_{\Lambda}\Big) \\
&- \frac{1}{8} (R^E_{\Lambda})^2 + 
 \frac{1}{8}\langle R^ E, R^ E\rangle_{\om}
+ \frac{3}{16} \ov{\partial}^{E*}\nabla^{1,0*} R^ E\,.
\end{split}
\end{equation}
\end{theorem}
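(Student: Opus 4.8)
The plan is to compute the coefficients $\bb_1$ and $\bb_2$ directly from the formulas \eqref{bk2.31} and \eqref{bk2.32} for the operators $\cF_{r,\,x_0}$, using the recipe $\bb_r(x_0) = (J_{2r,\,x_0}\cP_{x_0})(0,0) = \cF_{2r,\,x_0}(0,0)$. The entire calculation is reduced to the model space $\R^{2n}=T_{x_0}X$, where everything is explicit: the spectrum of $\cL$ is known by Theorem \ref{bkt2.17}, the projection kernel $\cP$ is given by \eqref{toe1.3}, and the resolvent $\cL^{-1}$ restricted to $\cP^\bot$ acts by $1/(4\pi|\alpha|)$ on the eigenspace basis \eqref{bk2.69}. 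So the first and indispensable step is to write down explicitly the coefficient operators $\mO_2$, $\mO_3$, $\mO_4$ in the Taylor expansion \eqref{bk2.22} of the rescaled Kodaira-Laplacian $\cL_t$. These come from the Taylor expansion at $x_0$ of the geometric data—the metric $g^{TX}$, the curvatures $R^L$, $R^E$, and the volume density $\kappa_{x_0}$—in normal coordinates, with $\mO_r$ collecting the order-$r$ terms. This is where the curvature invariants $r^X$, $\Ric_\om$, $R^{TX}$, $R^E_\Lambda$ enter.

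Next I would insert these explicit $\mO_r$ into the formulas \eqref{bk2.31}--\eqref{bk2.32}. For $\bb_1=\cF_{2,\,x_0}(0,0)$ the computation is comparatively short: one only needs $\mO_2$, and evaluates the operator $-\cL^{-1}\cP^\bot\mO_2\cP - \cP\mO_2\cL^{-1}\cP^\bot$ on the vacuum state $\varphi_0$ and reads off the value of its kernel at $(0,0)$. Because $\cP\mO_2\cL^{-1}\cP^\bot$ and $\cL^{-1}\cP^\bot\mO_2\cP$ involve the resolvent on the orthogonal complement, one expands $\mO_2\varphi_0$ (or $\mO_2^*\varphi_0$) in the basis \eqref{bk2.69}, divides by the appropriate eigenvalue $4\pi|\alpha|$, and contracts. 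The harmonic-oscillator algebra of the creation/annihilation operators $b_i$, $b_i^+$ makes these inner products computable as Gaussian integrals; the $\frac{1}{8\pi}r^X$ and $\frac{\imat}{2\pi}R^E_\Lambda$ terms emerge from the scalar-curvature part of $\mO_2$ and from the $R^E$-part respectively.

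For $\bb_2=\cF_{4,\,x_0}(0,0)$ the same mechanism applies but now all eight terms of \eqref{bk2.32} contribute, requiring $\mO_2$, $\mO_3$, $\mO_4$ and products such as $\cL^{-1}\cP^\bot\mO_2\cL^{-1}\cP^\bot\mO_2\cP$ and $\cP\mO_2\cL^{-2}\cP^\bot\mO_2\cP$. Each term is again evaluated on $\varphi_0$, expanded in the eigenbasis, and the resolvent powers $\cL^{-1}$, $\cL^{-2}$ act as division by $4\pi|\alpha|$ and $(4\pi|\alpha|)^2$. The eventual assembly into the stated combination of $\Delta r^X$, $|R^{TX}|^2$, $|\Ric_\om|^2$, $(r^X)^2$, the mixed curvature terms $\langle\Ric_\om,R^E\rangle_\om$, $\Delta^E R^E_\Lambda$, $(R^E_\Lambda)^2$, $\langle R^E,R^E\rangle_\om$, and $\ov\partial^{E*}\nabla^{1,0*}R^E$ requires recognizing which Taylor coefficients of the metric and curvature repackage into these invariants—e.g.\ second derivatives of the metric giving $R^{TX}$ and $\Ric$, and the relation between the scalar Laplacian $\Delta r^X$ and the fourth-order term in $\kappa_{x_0}$.

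The main obstacle is unquestionably the bookkeeping in the $\bb_2$ computation: one must carry the Taylor expansions to fourth order consistently, track the noncommutativity of $\End(E)$-valued terms (so that $\nabla^{1,0*}$, $\ov\partial^{E*}$ and the pairing $\langle\cdot,\cdot\rangle_\om$ appear in the correct order), and perform a large number of Gaussian moment integrals against $\cP$ without error. The delicate point is ensuring that the intermediate expressions—each individually coordinate-dependent—combine into the manifestly invariant curvature quantities on the right-hand side of \eqref{toe4.132}, which serves as the crucial consistency check on the whole calculation.
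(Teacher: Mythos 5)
Your plan coincides with the method the paper itself outlines just before the theorem (and carries out in detail in \cite{MM10}): rescale $\Box_p$ to get $\cL_t=\cL+\sum_r t^r\mO_r$, use the resolvent formulas \eqref{bk2.31}--\eqref{bk2.32} for $\cF_{2}$ and $\cF_{4}$, and read off $\bb_r=\cF_{2r,\,x_0}(0,0)$ by evaluating on the ground state of the model operator via its explicit spectral decomposition. This is the same approach as the paper's, with the remaining work being exactly the explicit computation of $\mO_2,\mO_3,\mO_4$ and the Gaussian bookkeeping you identify.
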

The terms $\bb_{1}$, $\bb_{2}$ were computed by
Lu \cite{Lu00} (for $E=\C$, the trivial line bundle with trivial
metric),  X.~Wang \cite{Wang05},
L.~Wang \cite{Wangl03}, in various degree of generality.
The method of these authors is to construct appropriate peak sections
as in \cite{T90}, using
H{\"o}rmander's $L^2$ $\overline\partial$-method.
In \cite[\S 5.1]{DLM04a}, Dai-Liu-Ma computed  $\bb_{1}$
by using the heat  kernel, and in \cite[\S 2]{MM08a}, \cite[\S 2]{MM06}
(cf.\ also \cite[\S 4.1.8, \S 8.3.4]{MM07}), we computed $\bb_{1}$ in the symplectic case.
A new method for calculating $\bb_2$ was given in \cite{MM10}.
\subsection{Asymptotic expansion of Toeplitz operators} \label{s3.3}
We stick to the situation studied in the previous Section, namely, $(X,\omega)$ is a compact K\"ahler manifold and $(L,h^L)$ is a Hermitian holomorphic line bundle satisfying \eqref{prequantum}, and $g^{TX}$ is the Riemannian metric associated to $\omega$.

\noindent
In order to develop the calculus of Toeplitz kernels we use the Bergman kernel expansion
\eqref{toe2.9} and the Taylor expansion of the symbol. We are thus led to a kernel calculus on $\C^n$ with kernels of the form $F\cP$, where $F$ is a polynomial.
This calculus can be completely described in terms of the spectral decomposition \eqref{bk2.68}-\eqref{bk2.69} of the model operator $\cL$.

For a polynomial $F$ in $Z,Z^\prime$, we denote by $F\cP$
the operator on $L^2(\R^{2n})$ defined by the kernel
$F(Z,Z^\prime)\cP(Z,Z^\prime)$ and the volume form $dZ$
according to \eqref{lm2.01c}.

The following very useful Lemma \cite[Lemma 7.1.1]{MM07}
describes the calculus of the kernels
$(F\cP)(Z,Z^\prime):=F(Z,Z^\prime)\cP(Z,Z^\prime)$.
\begin{lemma}\label{toet1.1}
For any
$F,G\in\C[Z, Z^{\prime}]$ there exists a polynomial
$\cK[F,G]\in\C[Z, Z^{\prime}]$
with degree $\deg\cK[F,G]$ of the same parity as
$\deg F+\deg G$, such that
\begin{equation}\label{toe1.6}
((F\cP) \circ (G\cP))(Z, Z^{\prime}) =
\cK[F,G](Z, Z^{\prime}) \cP( Z, Z^{\prime}).
\end{equation}
\end{lemma}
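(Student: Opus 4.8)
The plan is to compute the composition kernel directly from its integral definition and to extract the factor $\cP(Z,Z')$, reducing the claim to a Gaussian moment computation on $\C^n$. Writing $W$ for the integration variable with complex coordinates $w=(w_1,\dots,w_n)$, and using the composition rule for kernels with respect to $dZ'$ as in \eqref{lm2.01c}, we have
\[
\bigl((F\cP)\circ(G\cP)\bigr)(Z,Z')=\int_{\C^n}F(Z,W)\,\cP(Z,W)\,G(W,Z')\,\cP(W,Z')\,dW.
\]
First I would record the elementary but crucial factorization of the Gaussian, read off directly from \eqref{toe1.3}:
\[
\cP(Z,W)\,\cP(W,Z')=\cP(Z,Z')\,\exp\Bigl(-\pi\sum_{i=1}^n(w_i-z_i)(\overline{w}_i-\overline{z}'_i)\Bigr).
\]
Substituting this into the integral pulls $\cP(Z,Z')$ outside and leaves an integral whose integrand is $F(Z,W)\,G(W,Z')$ — a polynomial in $(w,\overline{w})$ with coefficients polynomial in $(Z,Z')$ — times a Gaussian weight.

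Next I would evaluate this Gaussian integral. After the real translation $v_i=w_i-z_i$ the weight becomes $\exp\bigl(-\pi\sum_i|v_i|^2+\pi\sum_i v_i\overline{\delta}_i\bigr)$ with $\delta_i=z'_i-z_i$, so the remaining linear term is \emph{holomorphic} in $v$. Expanding $\exp(\pi\sum_i v_i\overline{\delta}_i)$ as a power series and invoking the orthogonality relation $\int_{\C^n}v^{\mu}\overline{v}^{\nu}e^{-\pi|v|^2}\,dV=0$ unless $\mu=\nu$ (the orthogonality of the monomials $\{z^\beta\}$ in the Segal–Bargmann space), only finitely many terms survive, since the integrand has bounded degree in $\overline{v}$. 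The surviving terms assemble into a polynomial in $\overline{\delta}$ with coefficients polynomial in $(Z,Z')$; hence the integral is a polynomial in $(Z,Z')$, which we name $\cK[F,G](Z,Z')$. This proves the factorization \eqref{toe1.6}, and the consistency check $\cK[1,1]=1$ reflects the idempotency $\cP\circ\cP=\cP$.

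Finally, for the parity statement I would exploit the symmetry of $\cP$ under the antipodal map. By the bilinearity of $\cK[\cdot,\cdot]$ evident from the integral formula, it suffices to treat $F,G$ homogeneous, of degrees $d_F,d_G$. From \eqref{toe1.3} one checks $\cP(-Z,-W)=\cP(Z,W)$ and $\cP(-Z,-Z')=\cP(Z,Z')$; performing the change of variables $W\mapsto-W$ in the integral that defines $\cK[F,G](-Z,-Z')$ and using $F(-Z,-W)=(-1)^{d_F}F(Z,W)$, $G(-W,-Z')=(-1)^{d_G}G(W,Z')$ yields
\[
\cK[F,G](-Z,-Z')=(-1)^{d_F+d_G}\,\cK[F,G](Z,Z').
\]
Thus every monomial of $\cK[F,G]$ has degree congruent to $d_F+d_G\pmod 2$, and in particular $\deg\cK[F,G]$ has the asserted parity. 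The main obstacle I anticipate is the Gaussian evaluation in the second step: because the completed square carries the asymmetric, holomorphic linear term $v_i\overline{\delta}_i$ rather than a genuine real shift, one cannot simply translate to center the Gaussian. It is the orthogonality-of-monomials argument — equivalently, the reproducing property of $\cP$ — that makes the expansion terminate and keeps the answer a polynomial rather than merely an entire function.
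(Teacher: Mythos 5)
Your proof is correct, but it takes a genuinely different route from the one in the paper (which follows \cite[Lemma 7.1.1]{MM07}). You compute the composition kernel directly: the Gaussian factorization $\cP(Z,W)\cP(W,Z')=\cP(Z,Z')\exp\bigl(-\pi\sum_i(w_i-z_i)(\overline{w}_i-\overline{z}'_i)\bigr)$ checks out against \eqref{toe1.3}, the termwise use of the orthogonality $\int v^{\mu}\overline{v}^{\nu}e^{-\pi|v|^2}\,dV=0$ for $\mu\neq\nu$ correctly truncates the expansion of the holomorphic linear factor (the degree of the integrand in $\overline{v}$ is finite, so only finitely many $\mu$ survive), and the antipodal change of variables gives the parity. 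The paper instead works algebraically with the creation/annihilation operators of the model Laplacian $\cL$: the relations $b_{j,z}\cP=2\pi(\overline{z}_j-\overline{z}'_j)\cP$, the commutators \eqref{toe1.7}, and the fact that $\cP$ annihilates $b^{\alpha}(\cdot)$ for $|\alpha|>0$ reduce everything to the reproducing property \eqref{toe1.4}, by induction on the degree. What each buys: your Gaussian-moment computation is more self-contained and yields a closed formula for $\cK[F,G]$ from which polynomiality and parity are transparent; the paper's operator calculus produces as a by-product the explicit recursion and the table \eqref{toe1.12}--\eqref{toe1.13}, which is what is actually needed later to compute $Q_{r,x_0}(f)$ and $C_r(f,g)$, and it carries over to the symplectic setting where $\cP$ is the projection onto $\Ker(\cL)$ of a more general model operator and a direct Gaussian evaluation is less convenient. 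One small point: your parity argument is clean for homogeneous $F,G$; for general $F,G$ the reduction ``by bilinearity'' gives that each $\cK[F^{(i)},G^{(j)}]$ has pure parity $i+j$, and to conclude the stated parity of $\deg\cK[F,G]$ you should also record that $\deg\cK[F^{(i)},G^{(j)}]\leqslant i+j$ (which your moment computation does show, since each surviving pairing replaces $v^{a}\overline{v}^{\,b}$ by a term of degree $|b|-|a|$); this is the same level of precision as the original statement, so it is a cosmetic rather than a substantive gap.
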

Let us illustrate how Lemma \ref{toet1.1} works. First observe
that from \eqref{bk2.67} and \eqref{toe1.3},
for any polynomial $g(z,\ov{z})\in\C[z,\ov{z}]$, we get
\begin{align}\label{toe1.7}
\begin{split}
&b_{j\,,z}\,\cP(Z, Z^{\prime}) = 2\pi (\ov{z}_j- \ov{z}_j^{\,\prime})
\cP(Z, Z^{\prime}),\\
&[g(z,\ov{z}),b_{j\,,z}]=2\frac{\partial}{\partial z_j}g(z,\ov{z})\,.
\end{split}\end{align}
Now \eqref{toe1.7} entails
\begin{equation}\label{toe1.7a1}
\ov{z}_j\,\cP(Z, Z^{\prime})=\frac{b_{j\,,z}}{2\pi}\,
\cP(Z, Z^{\prime})+\ov{z}_j^{\,\prime}\cP(Z, Z^{\prime}).
\end{equation}
Specializing \eqref{toe1.7} for $g(z,\overline{z})=z_i$ we get
\begin{equation}\label{toe1.7b1}
z_i\,b_{j\,,z}\cP(Z, Z^{\prime})
=b_{j\,,z}(z_i\cP)(Z, Z^{\prime})+2\delta_{ij}\cP(Z, Z^{\prime}),
\end{equation}
Formulas \eqref{toe1.7a1} and \eqref{toe1.7b1} give
\begin{align} \label{toe1.11}
\begin{split}
z_i\ov{z}_j\,\cP(Z, Z^{\prime})
&=\frac{1}{2\pi}\,b_{j\,,z}\,z_i\cP(Z, Z^{\prime})
+\frac{1}{\pi}\, \delta_{ij}
\cP(Z, Z^{\prime})+z_i\ov{z}^{\,\prime}_j\,\cP(Z, Z^{\prime})\,.
\end{split}\end{align}

Using the preceding formula we calculate further some examples for
the expression $\cK[F,G]$ introduced \eqref{toe1.6}.
We use the spectral decomposition of $\cL$ in the following way.
If $\varphi(Z)= b^\alpha z^\beta
\exp\big(-\frac{\pi}{2} \sum_{j=1}^n |z_j|^2\big)$
with $\alpha,\beta\in \N^n$, then Theorem \ref{bkt2.17}
implies immediately that
 \begin{align}\label{toe1.4}
(\cP \varphi)(Z)= \left \{  \begin{array}{ll}
\displaystyle{z^\beta \exp\Big (-\frac{\pi}{2} \sum_{j=1}^n |z_j|^2\Big ) }
& \mbox{if} \,\, |\alpha|=0,  \\
 0& \mbox{if} \,\, |\alpha|>0.\end{array}\right.
\end{align}
The identities 
\eqref{toe1.7a1}, \eqref{toe1.11} and \eqref{toe1.4} imply that
\begin{align} \label{toe1.12}
\begin{split}
&\cK[1,\ov{z}_j]\cP =\cP\circ(\ov{z}_j\cP)
=\ov{z}^{\,\prime}_j\cP,\quad
\cK[1,z_j]\cP= \cP\circ(z_j\cP)= z_j\cP,\\
&\cK[z_i,\ov{z}_j]\cP =(z_i \cP)\circ(\ov{z}_j\cP)
= z_i  \cP\circ(\ov{z}_j\cP)=z_i\ov{z}^{\,\prime}_j\cP,\\
&\cK[\ov{z}_i,z_j]\cP= (\ov{z}_i\cP)\circ(z_j\cP)
= \ov{z}_i\cP\circ(z_j\cP) = \ov{z}_i z_j\cP,\\
&\cK[z_i^{\,\prime},\ov{z}_j]\cP =(z_i^{\,\prime} \cP)\circ(\ov{z}_j\cP)
=  \cP\circ(z_i\ov{z}_j\cP)= \tfrac{1}{\pi}\delta_{ij} \cP
+z_i \ov{z}_j^{\,\prime}\cP,\\
&\cK[\ov{z}_i^{\,\prime},z_j]\cP= (\ov{z}_i^{\,\prime}\cP)\circ(z_j\cP)
= \cP\circ(\ov{z}_i z_j\cP) = \tfrac{1}{\pi}\delta_{ij} \cP
+\ov{z}_i^{\,\prime} z_j\cP.
\end{split}\end{align}
Thus we get
\begin{align} \label{toe1.13}
\begin{split}
& \cK[1,\ov{z}_j]= \ov{z}^{\,\prime}_j,\quad \cK[1,z_j]=z_j,\\
&\cK[z_i,\ov{z}_j]=z_i\ov{z}^{\,\prime}_j,\quad \cK[\ov{z}_i,z_j]= \ov{z}_i z_j,\\
&\cK[\ov{z}_i^{\,\prime},z_j]= \cK[z_j^\prime,\ov{z}_i]
= \tfrac{1}{\pi}\delta_{ij} +\ov{z}_i^{\,\prime} z_j.
\end{split}\end{align}

To simplify our calculations, we introduce the following notation.
For any polynomial $F\in\C[Z,Z^{\prime}]$ we denote by $(F\cP)_p$
the operator defined by the kernel
$p^{n}(F\cP)(\sqrt{p}Z, \sqrt{p}Z^{\prime})$, that is,
\begin{equation}
((F\cP)_p \varphi)(Z)=\int_{\R^{2n}}p^{n}
(F\cP)(\sqrt{p}Z, \sqrt{p}Z^{\prime})\varphi(Z^{\prime})\,dZ^{\prime}\,,\quad
\text{for $\varphi\in L^2(\R^{2n})$.}
\end{equation}
Let $F,G\in \C[Z,Z^\prime]$. By a change of variables we obtain
\begin{align} \label{toe1.15}
((F\cP)_p\circ (G\cP)_p)(Z,Z^\prime)
= p^{n}((F\cP)\circ (G\cP))(\sqrt{p}Z, \sqrt{p}Z^{\prime}).
\end{align}

\noindent
We examine now the asymptotic expansion of the kernel of the Toeplitz 
operators $T_{f,\,p}$.
The first observation is that outside the diagonal of $X\times X$,
the kernel of $T_{f,\,p}$ has the growth $\cO(p^{-\infty})$, as $p\to\infty$. 
\begin{lemma}[{\cite[Lemma\,4.2]{MM08b}}] \label{toet2.1}
For every $\varepsilon>0$ and every $l,m\in\N$, 
there exists $C_{l,m,\varepsilon}>0$ such that 
\begin{equation} \label{toe2.6b}
|T_{f,\,p}(x,x')|_{\cC^m(X\times X)}\leqslant C_{l,m,\varepsilon}p^{-l}
\end{equation}
for all $p\geqslant 1$ and all $(x,x')\in X\times X$ 
with $d(x,x')>\varepsilon$,
where the $\cC^m$-norm is induced by $\nabla^L,\nabla^E$ and $h^L,h^E,g^{TX}$.
\end{lemma}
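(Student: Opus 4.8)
The plan is to reduce the statement about the Toeplitz kernel $T_{f,\,p}(x,x')$ to the already-established off-diagonal decay of the Bergman kernel in Theorem \ref{tue16}. Recall from \eqref{toe2.5} that
\begin{equation*}
T_{f,\,p}(x,x')=\int_X P_p(x,x'')\,f(x'')\,P_p(x'',x')\,dv_X(x'')\,.
\end{equation*}
The idea is that if $d(x,x')>\var$, then for \emph{every} intermediate point $x''$ at least one of the two distances $d(x,x'')$ or $d(x'',x')$ must be bounded below, since by the triangle inequality $d(x,x'')+d(x'',x')\geqslant d(x,x')>\var$, so one of them exceeds $\var/2$. On that region the corresponding Bergman factor is $\mO(p^{-\infty})$ by \eqref{toe2.6a}, while the other factor together with $f$ stays uniformly bounded; integrating over the compact manifold $X$ then yields the desired decay.

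To make this precise I would first split the integration domain $X=U_x\cup U_{x'}$, where $U_x=\{x'':d(x,x'')\geqslant\var/2\}$ and $U_{x'}=\{x'':d(x'',x')\geqslant\var/2\}$; the triangle inequality guarantees these two sets cover $X$. On $U_x$ I would estimate the integrand by bounding $|P_p(x,x'')|$ via \eqref{toe2.6a} with the exponent $l$ replaced by a sufficiently large $l'$, while controlling $|f(x'')|$ and $|P_p(x'',x')|$ by their sup-norms; the latter is $\mO(p^n)$ on the diagonal from Corollary \ref{bkt2.18}, but since $l'$ can be taken arbitrarily large this loss of a fixed power of $p$ is harmless. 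The contribution from $U_{x'}$ is handled symmetrically using the Hermitian symmetry $P_p(x'',x')=P_p(x',x'')^*$.

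The only genuine point requiring care is the $\cC^m$-norm rather than the pointwise bound. For this I would differentiate \eqref{toe2.5} in $x$ and $x'$ under the integral sign; the derivatives land only on the two Bergman factors (the parameter $x''$ and the section $f$ are unaffected), so each differentiated integrand is again a product where one Bergman factor carries derivatives in a variable that stays at distance $\geqslant\var/2$ from its second argument. The $\cC^m$-version of Theorem \ref{tue16}, namely \eqref{toe2.6a} with its $\cC^m$-norm, then supplies the $\mO(p^{-\infty})$ bound for that factor, and the remaining factors are controlled in $\cC^m$ by the diagonal estimate \eqref{bk2.6} together with the far off-diagonal estimate. Collecting the two pieces and using compactness of $X$ to bound the volume of integration gives \eqref{toe2.6b}.

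The main obstacle I anticipate is purely bookkeeping: organizing the product rule so that in each of the two regions the factor being estimated by $\mO(p^{-\infty})$ is indeed the one whose two arguments are separated, and verifying that the $\cC^m$-norms are taken with respect to the correct connections and that the smooth cutoff partitioning $U_x$ and $U_{x'}$ does not introduce spurious derivative growth. None of these steps is conceptually difficult given Theorem \ref{tue16}, so the lemma should follow by a careful but routine estimate.
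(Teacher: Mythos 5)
Your proposal is correct and follows essentially the same route as the paper: the paper's proof likewise combines the far off-diagonal decay \eqref{toe2.6a} of $P_p$ with a global polynomial $\cC^m$-bound on $P_p(x,x')$ and the integral formula \eqref{toe2.5}, the triangle-inequality splitting being exactly the implied mechanism. The only small correction is that the uniform polynomial bound on $|P_p(x,x'')|_{\cC^m}$ near the diagonal should be quoted from the near off-diagonal expansion \eqref{toe2.9} rather than from the purely diagonal estimate \eqref{bk2.6}.
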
 
\begin{proof}
Due to \eqref{toe2.6a},  \eqref{toe2.6b} holds 
if we replace $T_{f,\,p}$ by $P_p$. 
Moreover, from \eqref{toe2.9}, for any $m\in \N$, 
there exist $C_m>0, M_m>0$ such that 
$|P_p(x,x')|_{\cC^m(X\times X)}<Cp^{M_m}$ for all $(x,x')\in X\times X$.
These two facts and formula \eqref{toe2.5} imply the Lemma. 
\end{proof}
The near off-diagonal expansion of the Bergman kernel \eqref{toe2.9} and the kernel calculus on $\C^n$ presented above imply the near off-diagonal expansion of the Toeplitz kernels.
(cf.\,\cite[Lemma\,4.6]{MM08b}, \cite[Lemma\,7.2.4]{MM07})
\begin{theorem} \label{toet2.3}
Let $f\in\cC^\infty(X,\End(E))$.
There exists a family \[\{Q_{r,\,x_0}(f)\in\End(E)_{x_0}[Z,Z^{\prime}]:r\in\N,\,x_0\in X\}\,,\]
depending smoothly on the parameter $x_0\in X$, where 
$Q_{r,\,x_0}(f)$
are polynomials with the same parity as $r$ and such that for every $k\in \N$,
$\varepsilon\in]0, a^X/4[$\,,
\begin{equation} \label{toe2.13}
p^{-n}T_{f,\,p,\,x_0}(Z,Z^{\prime})
\cong \sum^k_{r=0}(Q_{r,\,x_0}(f)\cP_{x_0})(\sqrt{p}Z,\sqrt{p}Z^{\prime})
p^{-r/2} + \mO(p^{-(k+1)/2})\,,
\end{equation}
on the set $\{(Z,Z^\prime)\in TX\times_X TX:\abs{Z},\abs{Z^{\prime}}<2\var\}$,
in the sense of Notation \ref{noe2.7}. Moreover,
  $Q_{r,\,x_0}(f)$ are expressed by
  \begin{equation} \label{toe2.14}
  Q_{r,\,x_0}(f) = \sum_{r_1+r_2+|\alpha|=r}
    \cK\Big[J_{r_1,\,x_0}\;,\;
  \frac{\partial ^\alpha f_{\,x_0}}{\partial Z^\alpha}(0)
  \frac{Z^\alpha}{\alpha !} J_{r_2,\,x_0}\Big]\,.
  \end{equation}
where $\cK[\cdot,\cdot]$ was introduced in \eqref{toe1.6}. We have,
  \begin{align} \label{toe2.15}
  Q_{0,\,x_0}(f)= f(x_0)\in\End(E_{x_0}) .
  \end{align}
  \end{theorem}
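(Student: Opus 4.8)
The plan is to insert the near off-diagonal Bergman kernel expansion \eqref{toe2.9} into the three-fold integral formula \eqref{toe2.5} for $T_{f,\,p}(x,x')$, to Taylor-expand the symbol $f$ at $x_0$, and to organize the resulting kernel on $\C^n$ by means of the composition calculus $\cK[\cdot,\cdot]$ of Lemma \ref{toet1.1}. First I would localize the $x''$-integration in \eqref{toe2.5}. Since \eqref{toe2.13} is asserted only on $\{|Z|,|Z'|<2\var\}$ and, by the far off-diagonal estimate \eqref{toe2.6a}, each Bergman factor $P_p(x,x'')$ and $P_p(x'',x')$ is $\mO(p^{-\infty})$ once $x''$ leaves a fixed coordinate ball about $x_0$, the integral may be restricted to $B^{T_{x_0}X}(0,\var)$ modulo $\mO(p^{-\infty})$, uniformly in the $\cC^l$-norm in $x_0$. (Lemma \ref{toet2.1} records the complementary fact that $T_{f,\,p}$ itself is $\mO(p^{-\infty})$ off the diagonal.) On this ball I pass to the basic trivialization and normal coordinates $x\leftrightarrow Z$, $x'\leftrightarrow Z'$, $x''\leftrightarrow Z''$, and use \eqref{atoe2.7} to replace $dv_X(Z'')$ by $\kappa_{x_0}(Z'')\,dv_{TX}(Z'')$.

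Next I substitute \eqref{toe2.9} into both Bergman factors and insert the Taylor expansion
\[
f_{x_0}(Z'')=\sum_{|\alpha|\leqslant k}\frac{\partial^\alpha f_{x_0}}{\partial Z^\alpha}(0)\frac{(Z'')^\alpha}{\alpha!}+\mO(|Z''|^{k+1})
\]
into the middle factor. After the rescaling $Z''\mapsto\sqrt p\,Z''$, the monomial $(Z'')^\alpha$ produces a factor $p^{-|\alpha|/2}$ while the two Bergman factors produce $p^{-r_1/2}$ and $p^{-r_2/2}$, and by \eqref{toe1.15} the $dv_{TX}(Z'')$ integration becomes the composition, on $\C^n\cong T_{x_0}X$, of the two model kernels
\[
\big(J_{r_1,\,x_0}\cP_{x_0}\big)\quad\text{and}\quad\Big(\tfrac{\partial^\alpha f_{x_0}}{\partial Z^\alpha}(0)\tfrac{Z^\alpha}{\alpha!}\,J_{r_2,\,x_0}\cP_{x_0}\Big).
\]
By Lemma \ref{toet1.1} each such composition equals $\cK[\,\cdot\,,\,\cdot\,]\cP_{x_0}$, so collecting the coefficient of $p^{-r/2}$ gives precisely formula \eqref{toe2.14}. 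The parity statement then follows at once: $J_{r_i,\,x_0}$ has parity $r_i$ by Theorem \ref{tue17}, $Z^\alpha$ has parity $|\alpha|$, and $\deg\cK[F,G]$ has the parity of $\deg F+\deg G$ by Lemma \ref{toet1.1}, so $Q_{r,\,x_0}(f)$ has the parity of $r_1+r_2+|\alpha|=r$.

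For the leading term \eqref{toe2.15}, the constraint $r_1+r_2+|\alpha|=0$ forces $r_1=r_2=|\alpha|=0$. Since $J_{0,\,x_0}=1$ by \eqref{bk2.33} and $\cP_{x_0}$ is an orthogonal projection, we have $(\cP_{x_0}\circ\cP_{x_0})(Z,Z')=\cP_{x_0}(Z,Z')$, hence $\cK[1,1]=1$; as the constant endomorphism $f(x_0)\in\End(E_{x_0})$ factors out of the kernel, this yields $Q_{0,\,x_0}(f)=f(x_0)$.

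The main obstacle is not the formal bookkeeping but the rigorous justification of the asymptotics in the sense of Notation \ref{noe2.7}, that is, controlling every remainder in the weighted $\cC^l$-norm governed by $(1+\sqrt p\,|Z|+\sqrt p\,|Z'|)^M e^{-C_0\sqrt p\,|Z-Z'|}$, uniformly in $x_0$ and $p$. Three error sources must be handled: the Taylor remainder of $f$, the remainders $\Psi_{p,\,k,\,x_0}$ of \eqref{toe2.71} for the two Bergman factors, and the error from truncating the $Z''$-integration to the coordinate ball and extending the kernels to all of $\R^{2n}$. The delicate point is that the Gaussian off-diagonal decay must survive the intermediate integration: here the triangle inequality $|Z-Z'|\leqslant|Z-Z''|+|Z''-Z'|$ transmits the exponential factor through the composition, while the polynomial growth $(1+\sqrt p\,|Z''|)^M$ is absorbed by the Gaussian and kept under control through the degree bound of $\cK[F,G]$ in Lemma \ref{toet1.1}. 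Assembling these estimates into a single decomposition of the form \eqref{toe2.71}, uniform for $x_0\in X$ and all $p$, is the technical heart of the argument.
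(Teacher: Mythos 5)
Your proposal is correct and follows essentially the same route as the paper's proof: localization of the middle integration via the off-diagonal decay (the paper uses an explicit cut-off $\rho(2|Z''|/\var)$ in \eqref{toe2.17}), Taylor expansion of $f_{x_0}$ as in \eqref{toe2.18}, substitution of the Bergman expansion \eqref{toe2.9}, rescaling $\sqrt{p}\,Z''=W$, and identification of the result through \eqref{toe1.6} and \eqref{toe1.15}, with \eqref{toe2.15} obtained from $J_{0,\,x_0}=1$ exactly as you do. Your added remarks on parity and on propagating the weighted estimates of Notation \ref{noe2.7} through the composition are consistent with (and slightly more explicit than) the paper's treatment.
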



\begin{proof} Estimates \eqref{toe2.5} and \eqref{toe2.6b} learn that for
$\abs{Z},\abs{Z^{\prime}}<\var/2$, $T_{f,\,p,\,x_0}(Z,Z^\prime)$
is determined up to terms of order $\cO(p^{-\infty})$
 by the behavior of $f$ in $B^X(x_0, \varepsilon)$.
Let $\rho: \R\to [0,1]$ be a smooth even function such that
\begin{equation}\label{alm4.19}
\rho (v)=1  \  \  {\rm if} \  \  |v|<2;
\quad \rho (v)=0 \   \   {\rm if} \  |v|>4.
\end{equation}
For $\abs{Z},\abs{Z^{\prime}}<\var/2$, we get
\begin{equation}\label{toe2.17}
\begin{split}
&T_{f,\,p,\,x_0}(Z,Z^{\prime})=\cO(p^{-\infty})\\ &+\int_{{ T_{x_0}X}}
P_{p,x_0}(Z,Z^{\prime\prime})
\rho(2|Z^{\prime\prime}|/\var)
f_{x_0}(Z^{\prime\prime})P_{p,x_0}(Z^{\prime\prime},Z^{\prime})
 \kappa_{x_0}(Z^{\prime\prime})
\,dv_{TX}(Z^{\prime\prime})\,.
\end{split}
\end{equation}
We consider the Taylor expansion of $f_{x_0}$:
\begin{equation}\label{toe2.18}
\begin{split}
f_{x_0}(Z)&=\sum_{|\alpha|\leqslant k}
\frac{\partial^\alpha f_{x_0}}{\partial Z^\alpha}(0)
\frac{ Z^\alpha}{\alpha!}+\cO(|Z|^{k+1})\\
&=\sum_{|\alpha|\leqslant k} p^{-|\alpha|/2}
\frac{\partial^\alpha f_{x_0}}{\partial Z^\alpha}(0)
\frac{ (\sqrt{p}Z)^\alpha}{\alpha!}
+p^{-\frac{k+1}{2}}\cO(|\sqrt{p}Z|^{k+1}).
\end{split}
\end{equation}
We multiply now the expansions given in \eqref{toe2.18} and \eqref{toe2.9}. Note the presence of $\kappa_{x_0}$ in the definition \eqref{toe2.71} of \eqref{toe2.7}. Hence we obtain the expansion of
\begin{equation*}
\kappa_{x_0}^{1/2}(Z)P_{p,\,x_0}(Z,Z^{\prime\prime})
(\kappa_{x_0}f_{x_0})(Z^{\prime\prime})
P_{p,\,x_0}(Z^{\prime\prime},Z^{\prime})\kappa_{x_0}^{1/2}(Z^{\prime})
\end{equation*}
which we substitute in \eqref{toe2.17}. We integrate then on
$T_{x_0}X$ by using the change of variable $\sqrt{p}\,Z^{\prime\prime}=W$
and conclude \eqref{toe2.13} and \eqref{toe2.14}
by using formulas \eqref{toe1.6} and \eqref{toe1.15}.

\noindent
From \eqref{bk2.33} and \eqref{toe2.14}, we get
\begin{align} \label{toe2.19}
Q_{0,\,x_0}(f)= \cK[1, f_{x_0}(0)] = f_{x_0}(0) = f(x_0)\,.
\end{align}
The proof of Lemma \ref{toet2.3} is complete.
\end{proof}
As an example, we compute $Q_{1,\,x_0}(f)$. By \eqref{bk2.22}, \eqref{toe1.13} and \eqref{toe2.14} we obtain
\begin{equation} \label{toe2.20}
\begin{split}
Q_{1,\,x_0}(f)= \cK\Big[1, \frac{\partial f_{x_0}}{\partial Z_j}(0) Z_j\Big]
=\frac{\partial f_{x_0}}{\partial z_j}(0) z_j+\frac{\partial f_{x_0}}{\partial\ov{z}_j}(0)\ov{z}^{\,\prime}_j\,.
\end{split}
\end{equation}

\begin{corollary}\label{toec2.1}
For any $f\in\cC^\infty(X,\End(E))$, we have
\begin{equation}\label{bk4.2}
T_{f,\,p}(x,x)= \sum_{r=0}^{\infty} \bb_{r,f}(x) p^{n-r}+\cO(p^{-\infty})\,,
\quad \bb_{r,f}\in\cC^\infty(X,\End(E))\,.
\end{equation}
\end{corollary}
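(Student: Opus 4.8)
The plan is to deduce the diagonal expansion \eqref{bk4.2} directly from the near off-diagonal expansion of the Toeplitz kernel in Theorem \ref{toet2.3} by restricting to $Z=Z'=0$. First I would recall that in the basic trivialization the base point corresponds to $Z=0$, so that $T_{f,\,p,\,x_0}(0,0)=T_{f,\,p}(x_0,x_0)$ as an element of $\End(E)_{x_0}$. Evaluating the model kernel on the diagonal via \eqref{toe1.3} gives $\cP_{x_0}(0,0)=1$, and since $\kappa_{x_0}(0)=1$ by \eqref{atoe2.7}, the factors $\kappa_{x_0}^{-1/2}$ appearing in the decomposition \eqref{toe2.71} are harmless at the origin. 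Thus specializing \eqref{toe2.13} to $Z=Z'=0$ yields
\[
p^{-n}T_{f,\,p}(x_0,x_0)=\sum_{r=0}^k Q_{r,\,x_0}(f)(0,0)\,p^{-r/2}+\cO(p^{-(k+1)/2}),
\]
where the remainder comes from the estimate on $\Psi_{p,\,k,\,x_0}$ in Notation \ref{noe2.7}: at $Z=Z'=0$ the weight $(1+\sqrt{p}\,|Z|+\sqrt{p}\,|Z'|)^M e^{-C_0\sqrt{p}\,|Z-Z'|}$ reduces to $1$, so $|\Psi_{p,\,k,\,x_0}(0,0)|\leqslant C_{k,l}\,p^{-(k+1)/2}$, and the $\mO(p^{-\infty})$ term is absorbed.

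The decisive step is the parity argument. Theorem \ref{toet2.3} asserts that $Q_{r,\,x_0}(f)$ is a polynomial in $(Z,Z')$ with the same parity as $r$; for odd $r$ it therefore contains only odd-degree monomials and vanishes at the origin. Hence only even indices $r=2r'$ survive in the sum, and I would set $\bb_{r',f}(x_0):=Q_{2r',\,x_0}(f)(0,0)\in\End(E)_{x_0}$. Multiplying through by $p^n$ then gives
\[
T_{f,\,p}(x_0,x_0)=\sum_{r'=0}^{\lfloor k/2\rfloor}\bb_{r',f}(x_0)\,p^{n-r'}+\cO(p^{n-(k+1)/2}),
\]
so that the half-integer powers $p^{-r/2}$ collapse to the integer powers $p^{n-r'}$ demanded by \eqref{bk4.2}.

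To upgrade this to the stated $\cO(p^{-\infty})$ remainder, I would observe that each coefficient $\bb_{r',f}(x_0)=Q_{2r',\,x_0}(f)(0,0)$ is defined intrinsically and is independent of the truncation order $k$ (as soon as $k\geqslant 2r'$). Since Theorem \ref{toet2.3} holds for every $k\in\N$, letting $k\to\infty$ shows that for any prescribed $N$ the error is $\cO(p^{n-N-1})$, which is precisely the meaning of $\cO(p^{-\infty})$ in \eqref{bk4.2}. Smoothness of $\bb_{r',f}$ in $x$ then follows because $Q_{r,\,x_0}(f)$ depends smoothly on $x_0$ by Theorem \ref{toet2.3}, and the estimates in Notation \ref{noe2.7} are phrased in $\cC^l(X)$ norm, so the whole expansion holds in every $\cC^l$ norm, giving $\bb_{r',f}\in\cC^\infty(X,\End(E))$. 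I expect no serious obstacle here: the corollary is a clean specialization of the off-diagonal expansion, and the only point requiring genuine care is recognizing that the odd-parity coefficients $Q_{2r'+1,\,x_0}(f)$ vanish on the diagonal, which is exactly what turns the $p^{-r/2}$-expansion into an honest $p^{n-r'}$-expansion.
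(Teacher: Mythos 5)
Your proof is correct and takes essentially the same route as the paper, which disposes of the corollary in one line by taking $Z=Z'=0$ in \eqref{toe2.13} and setting $\bb_{r,f}(x)=Q_{2r,x}(f)(0,0)$. The parity argument you spell out (odd-$r$ coefficients vanish at the origin, collapsing the half-integer powers to integer powers) is exactly the point the paper leaves implicit.
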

\begin{proof}
By taking $Z=Z'=0$ in \eqref{toe2.13} we obtain \eqref{bk4.2}, with $\bb_{r,f}(x)=Q_{2r,x}(f)$. 
\end{proof}
Since we have the precise formula \eqref{toe2.13} for $Q_{2r,x}(f)$ we can give a closed formula for the first coefficients $\bb_{r,f}$.  
In \cite{MM10}, we computed the coefficients
$\bb_{1,f}, \bb_{2,f}$,
from \eqref{bk4.2}. These computations are also 
relevant in K{\"a}hler geometry 
(cf.\ \cite{Fine08}, \cite{Fine10}, \cite{LM07}).
\begin{theorem}[{\cite[Th.\,0.1]{MM10}}] \label{toet4.6} 

For any $f\in\cC^\infty(X,\End(E))$, we have{\rm:}
\begin{equation}\label{bk4.3}
\bb_{0,\,f}=f, \quad \bb_{1,\,f} = \frac{r^X}{8\pi}  f
+ \frac{\sqrt{-1}}{4\pi}
\left(R^E_{\Lambda}  f + fR^E_{\Lambda} \right)
-  \frac{1}{4\pi} \Delta^{E} f\,.
\end{equation}
If $f\in \cC^\infty(X)$, then
\begin{equation}\label{abk4.4}
\begin{split}
\pi^2 \bb_{2,\,f}  =& \,\pi^2  \bb_{2}  f
+ \frac{1}{32}\Delta^2 f
- \frac{1}{32}  r^X \Delta f
- \frac{\sqrt{-1}}{8} \big\langle \Ric_\om, 
\partial\ov{\partial}f\big\rangle\\
&+ \frac{\sqrt{-1}}{24} \big\langle df, 
\nabla^E R^E_{\Lambda}\big\rangle_{\om}
 + \frac{1}{24} \big\langle \partial f, \nabla^{1,0*} R^E\big\rangle_{\om}
 - \frac{1}{24} \big\langle \ov{\partial} f, 
\ov{\partial}^{E*} R^E\big\rangle_{\om}\\
&-  \frac{\sqrt{-1}}{8} (\Delta f)R^E_{\Lambda}
+ \frac{1}{4}\big\langle \partial \ov{\partial} f, R^E\big\rangle_{\om}\,.
\end{split}
\end{equation}
\end{theorem}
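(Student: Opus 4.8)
The plan is to start from the identity $\bb_{r,f}=Q_{2r,x}(f)$ of Corollary \ref{toec2.1} and to evaluate the explicit formula \eqref{toe2.14} for $Q_{r,x_0}(f)$ at the diagonal point $Z=Z'=0$, organizing the sum $\sum_{r_1+r_2+|\alpha|=r}$ according to the order $|\alpha|$ of the derivative of $f$ that occurs. Two structural facts make this tractable. First, $J_{0,x_0}=1$ and $J_{1,x_0}=0$ from \eqref{bk2.33} annihilate every term in which $r_1=1$ or $r_2=1$. Second, the idempotency $P_p=P_p\circ P_p$ of the Bergman projection reads, in the rescaled model, $\sum_{r_1+r_2=r}\cF_{r_1,x_0}\circ\cF_{r_2,x_0}=\cF_{r,x_0}$ (one checks this directly for small $r$ using \eqref{bk2.31}). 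This last relation guarantees that the purely $|\alpha|=0$ contributions reassemble into the Bergman coefficient $\bb_{r}$ acting on $f$; because the value $f(x_0)$ is inserted between the two kernel factors, a scalar piece of $\bb_r$ simply multiplies $f$ while an $\End(E)$-valued piece such as $R^E_\Lambda$ appears symmetrized, which is precisely the shape of the first line of \eqref{bk4.3}.

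For $\bb_{1,f}=Q_{2,x}(f)$ I would first collect the $|\alpha|=0$ terms $\cK[1,f(x_0)J_{2,x_0}]$ and $\cK[J_{2,x_0},f(x_0)]$; using $\bb_1(x_0)=J_{2,x_0}(0,0)$, the above self-consistency, and Theorem \ref{toet4.5}, they produce the symmetrization $\tfrac12(\bb_1 f+f\bb_1)=\frac{r^X}{8\pi}f+\frac{\sqrt{-1}}{4\pi}(R^E_\Lambda f+fR^E_\Lambda)$. The only other contribution is the single $|\alpha|=2$ term $\cK\big[1,\sum_{|\alpha|=2}\tfrac{\partial^\alpha f_{x_0}}{\partial Z^\alpha}(0)\tfrac{Z^\alpha}{\alpha!}\big]$, which I would evaluate at $(0,0)$ via the kernel calculus \eqref{toe1.13}: the pure holomorphic and antiholomorphic second derivatives drop out on the diagonal and only the mixed derivatives $\partial^2 f/\partial z_i\partial\ov{z}_j$ survive, contracting to $-\frac{1}{4\pi}\Delta^E f$ once the normal-coordinate correction of $\kappa_{x_0}$ and of the trivialization turns the flat Hessian into the covariant Bochner Laplacian.

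For $\bb_{2,f}=Q_{4,x}(f)$ with $f$ scalar the same scheme applies, now over $|\alpha|=0,1,2,4$ (the $|\alpha|=3$ terms vanish since they force a factor $J_{1}$). The $|\alpha|=0$ part gives $\pi^2\bb_2 f$; the $|\alpha|=4$ part, treated as above but with fourth-order kernel calculus, gives $\frac{1}{32}\Delta^2 f$; the $|\alpha|=1$ part pairs $\partial f/\partial Z$ against $J_{3,x_0}$ (computed from $\cF_{3,x_0}$ in \eqref{bk2.31}) and yields the first-order terms $\frac{\sqrt{-1}}{24}\langle df,\nabla^E R^E_\Lambda\rangle_\om+\frac{1}{24}\langle\partial f,\nabla^{1,0*}R^E\rangle_\om-\frac{1}{24}\langle\ov{\partial} f,\ov{\partial}^{E*}R^E\rangle_\om$; and the $|\alpha|=2$ part, pairing the Hessian of $f$ against $J_{2,x_0}$ (and against the second-order Taylor coefficients of $\kappa_{x_0}$ and of the metric), produces the remaining second-order terms $-\frac{1}{32}r^X\Delta f-\frac{\sqrt{-1}}{8}\langle\Ric_\om,\partial\ov{\partial}f\rangle-\frac{\sqrt{-1}}{8}(\Delta f)R^E_\Lambda+\frac14\langle\partial\ov{\partial}f,R^E\rangle_\om$.

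The genuinely hard part is this last identification. Carrying out \eqref{toe2.14} yields, after the kernel calculus, a collection of flat polynomial expressions in the Taylor coefficients at $x_0$ of $f$, of the metric $g^{TX}$, and of the curvatures $R^L$, $R^E$, $R^{TX}$; the obstacle is the bookkeeping that rewrites these in the coordinate-free form of \eqref{abk4.4}. This requires the normal-coordinate Taylor expansions of $\kappa_{x_0}$, of the connection coefficients of $\nabla^E$, and of $R^{TX}$, together with the very computation of $J_{2},J_3,J_4$ through the operators $\mO_2,\mO_3,\mO_4$ and the resolvent expansion \eqref{bk2.23} that already underlies Theorem \ref{toet4.5}, so that the scalar curvature, the Ricci form contracted with $\partial\ov{\partial}f$, and the covariant derivatives of $R^E$ emerge with the correct numerical coefficients. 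Once the flat computation of $\bb_2$ itself is in hand from Theorem \ref{toet4.5}, the $f$-dependent corrections follow by tracking how each derivative $\partial^\alpha f(0)$ threads through those already-computed operators.
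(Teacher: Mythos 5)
The survey gives no proof of this theorem: it is quoted from \cite[Th.\,0.1]{MM10}, and the only indication of method in the text is the remark after Corollary \ref{toec2.1} that the closed formula \eqref{toe2.14} for $Q_{2r,x}(f)$ is the starting point. Your strategy --- evaluate $\bb_{r,f}(x)=Q_{2r,x}(f)$ at $Z=Z'=0$, stratify the sum by $|\alpha|$, use $J_0=1$, $J_1=0$ to kill the odd factors, and feed in the resolvent formulas \eqref{bk2.31}--\eqref{bk2.32} for $J_2,J_3,J_4$ together with the kernel calculus \eqref{toe1.13} --- is exactly the method underlying the cited computation, and your bookkeeping of which $(r_1,r_2,|\alpha|)$ strata survive is correct, as is the evaluation of the $|\alpha|=2$ stratum of $Q_2(f)$ giving $-\tfrac{1}{4\pi}\Delta^E f$.

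Two things keep this from being a proof. First, a genuine logical gap in the $|\alpha|=0$ stratum: the idempotency $\sum_{r_1+r_2=r}\cF_{r_1}\circ\cF_{r_2}=\cF_r$ only tells you that $\cK[J_2,1](0,0)+\cK[1,J_2](0,0)=\bb_1$, i.e.\ $A+A^*=\bb_1$ with $A=\cK[J_2,1](0,0)$. To obtain the symmetrized form $\tfrac{\imat}{4\pi}(R^E_\Lambda f+fR^E_\Lambda)$ for noncommuting $f$ you need the stronger fact that $A=\tfrac12\bb_1$ (equivalently that $A$ is itself self-adjoint, so that its $\End(E)$-part is exactly half of $\tfrac{\imat}{2\pi}R^E_\Lambda$); this does not follow from $A+A^*=\bb_1$ and must be extracted from the explicit form $\cF_2\cP=-\cL^{-1}\cP^\bot\mO_2\cP$. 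The same issue recurs, amplified, in the $|\alpha|=0$ and $|\alpha|=2$ strata of $Q_4(f)$. Second, and more consequentially, every numerical coefficient in \eqref{abk4.4} --- the $-\tfrac{1}{32}r^X\Delta f$, the $\tfrac{1}{24}$'s, the $\tfrac{\imat}{8}$'s --- lives precisely in the computations you defer: the normal-coordinate expansion of $\mO_2,\mO_3,\mO_4$, the evaluation of the resolvent kernels at the origin, and the contraction of the flat Taylor data against $\partial^\alpha f(0)$. (Note also that in \eqref{toe2.14} the $\kappa_{x_0}$ and metric corrections are already absorbed into the $J_r$, not paired separately with the Hessian of $f$ as your last paragraph suggests.) Since for a statement of this kind the identification of those coefficients \emph{is} the theorem, what you have is a correct and well-organized reduction to the computation carried out in \cite{MM10}, not an independent verification of \eqref{bk4.3} and \eqref{abk4.4}.
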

\subsection{Algebra of Toeplitz operators, Berezin-Toeplitz star-product}\label{s3.6}
Lemma \ref{toet2.1} and Theorem \ref{toet2.3} provide the asymptotic expansion of the kernel of a Toeplitz operator $T_{f,\,p}$\,. Using this lemma we can for example easily obtain the expansion of the kernel of the composition $T_{f,\,p}T_{g,\,p}$, for two sections $f,g\in\cC^{\infty}(X,\End(E))$. The result will be an asymptotic expansion of the type \eqref{toe2.13}. Luckily we can show that the existence of a such asymptotic expansion \emph{characterizes}
Toeplitz operators (in the sense of Definition \ref{toe-def}).
We have the following useful criterion which ensures that
a given family is a Toeplitz operator.
\begin{theorem}\label{toet3.1}
Let $\{T_p:L^2(X,L^p\otimes E)\longrightarrow L^2(X,L^p\otimes E)\}$
be a family of bounded linear operators. Then $\{T_p\}$ is a Toeplitz operator if and only if  satisfies
the following three conditions:
\\[2pt]
(i) For any $p\in \N$,  $P_p\,T_p\,P_p=T_p$\,.
\\[2pt]
(ii) For any $\varepsilon_0>0$ and any $l\in\N$,
there exists $C_{l,\,\varepsilon_0}>0$ such that
for all $p\geqslant 1$ and all $(x,x')\in X\times X$
with $d(x,x')>\varepsilon_0$,
\begin{equation} \label{toe3.1}
|T_{p}(x,x')|\leqslant C_{l,\varepsilon_0}p^{-l}.
\end{equation}
\\[2pt]
(iii) There exists a family of polynomials
$\{\mQ_{r,\,x_0}\in\End(E)_{x_0}
[Z,Z^{\prime}]\}_{x_0\in X}$
such that:
\begin{itemize}
\item[(a)] each $\mQ_{r,\,x_0}$ has the same parity as $r$,
\item[(b)] the family is smooth in $x_0\in X$ and
\item[(c)] there exists $0<\var^\prime<a^X/4$ such that for every  $x_0\in X$,
 every $Z,Z^\prime \in T_{x_0}X$ with  $\abs{Z},\abs{Z^{\prime}}<\var^\prime$
and every $k\in\N$ we have
\begin{equation} \label{toe3.2}
p^{-n}T_{p,\,x_0}(Z,Z^{\prime})\cong
\sum^k_{r=0}(\mQ_{r,\,x_0}\cP_{x_0})
(\sqrt{p}Z,\sqrt{p}Z^{\prime})p^{-\frac{r}{2}} + \mO(p^{-\frac{k+1}{2}}),
\end{equation}
in the sense of Notation \ref{noe2.7}.
\end{itemize}
\end{theorem}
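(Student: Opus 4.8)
The proof of Theorem \ref{toet3.1} splits naturally into the two implications, and the plan is to treat them separately.

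\textbf{The forward direction.} Suppose first that $\{T_p\}$ is a Toeplitz operator in the sense of Definition \ref{toe-def}, so that $T_p = \sum_{\ell=0}^k T_{g_\ell,p}\,p^{-\ell} + \mO(p^{-k-1})$ for a sequence $g_\ell\in\cC^\infty(X,\End(E))$. Condition (i) is immediate, since each $T_{g_\ell,p} = P_p\,g_\ell\,P_p$ satisfies $P_p T_{g_\ell,p} P_p = T_{g_\ell,p}$ (using $P_p^2=P_p$), and the defining estimate \eqref{toe2.3} passes this property to the limit $T_p$. Condition (ii) follows by combining the off-diagonal decay of each Toeplitz kernel from Lemma \ref{toet2.1} with the operator-norm control \eqref{toe2.3}: I would first establish the kernel estimate for the finite sum $\sum_{\ell=0}^k T_{g_\ell,p}\,p^{-\ell}$ directly from \eqref{toe2.6b}, and then absorb the tail $\mO(p^{-k-1})$ into the right-hand side by choosing $k$ large relative to the desired decay rate $l$. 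The subtle point here is that \eqref{toe2.3} controls only the operator norm, not the pointwise kernel; one must therefore use elliptic/Sobolev estimates to convert the operator-norm bound on the remainder into a $\cC^m$-kernel bound, which is where the smoothing property $P_p T_p P_p = T_p$ is essential. For condition (iii), I would invoke the near off-diagonal expansion of each $T_{g_\ell,p}$ from Theorem \ref{toet2.3}, which furnishes polynomials $Q_{r,x_0}(g_\ell)$ of parity $r$, and then assemble the required family by setting $\mQ_{r,x_0} = \sum_{\ell} Q_{r-2\ell,\,x_0}(g_\ell)$, collecting contributions according to the total power of $p^{-1/2}$; the parity and smoothness in $x_0$ are inherited termwise, and the remainder estimate in Notation \ref{noe2.7} again follows after dominating the operator-norm tail.

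\textbf{The converse direction.} This is the substantive half. Assuming (i), (ii), (iii), I must construct sections $g_\ell\in\cC^\infty(X,\End(E))$ realizing the expansion \eqref{toe2.3}. The strategy is to read off the $g_\ell$ recursively from the diagonal values of the polynomials $\mQ_{r,x_0}$, comparing with the Toeplitz expansion \eqref{toe2.13}. Concretely, I would define $g_0(x_0) := \mQ_{0,x_0}$ evaluated appropriately (the principal symbol), and then at each stage subtract the Toeplitz operator $T_{g_0,p}$ and examine the next-order kernel of $T_p - T_{g_0,p}$, whose expansion is governed by $\mQ_{r,x_0} - Q_{r,x_0}(g_0)$. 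The key algebraic input is the kernel calculus of Lemma \ref{toet1.1} together with the explicit formula \eqref{toe2.14} expressing $Q_{r,x_0}(f)$ via $\cK[\,\cdot\,,\,\cdot\,]$; in particular \eqref{toe2.15} gives $Q_{0,x_0}(f)=f(x_0)$, so the leading term determines $g_0$ uniquely. One then proceeds by induction: having matched the expansion up to order $r-1$ by a partial sum $\sum_{\ell<r} g_\ell\,p^{-\ell}$, the operator $R_p := T_p - \sum_{\ell<r} T_{g_\ell,p}\,p^{-\ell}$ still satisfies (i)–(iii) with vanishing polynomials up to the relevant order, and the next coefficient $g_r$ is extracted from the leading surviving term of its expansion.

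\textbf{Main obstacle.} The heart of the argument, and the step I expect to be hardest, is passing from the \emph{kernel} estimates of condition (iii) back to an \emph{operator-norm} estimate of the form \eqref{toe2.3}; this is the direction opposite to the one used in the forward implication. The issue is that a $\mO(p^{-(k+1)/2})$ control on the rescaled kernel in the sense of Notation \ref{noe2.7}, concentrated near the diagonal with Gaussian decay in $\sqrt{p}\,|Z-Z'|$, must be shown to imply a genuine $L^2$-operator bound on $R_p := T_p - \sum_{\ell=0}^k T_{g_\ell,p}\,p^{-\ell}$. I would handle this by combining the off-diagonal decay from (ii) with a Schur-type test: the Gaussian factor $e^{-C_0\sqrt p\,|Z-Z'|}$ together with the polynomial weight integrates, after the change of variable $W=\sqrt p\,Z$, to a factor $p^{-n}$ against the kernel normalization $p^n$, yielding uniform control of the kernel's $L^1$-norm in each slot and hence the operator norm. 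One must also verify that the half-integer powers $p^{-(2r+1)/2}$ in \eqref{toe2.13} do not contribute, which follows from the parity condition (iii)(a): odd polynomials $\mQ_{r,x_0}$ with $r$ odd produce kernels whose diagonal restriction and, more importantly, whose operator-norm contribution vanish to the next integer order, so that only integer powers of $p^{-1}$ survive in \eqref{toe2.3}. Establishing this parity-induced cancellation cleanly, uniformly in $x_0$, is the technical crux of the converse.
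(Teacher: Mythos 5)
Your overall architecture (necessity via Lemma \ref{toet2.1} and Theorem \ref{toet2.3}; sufficiency by recursively extracting symbols $g_\ell$ and subtracting $T_{g_\ell,\,p}\,p^{-\ell}$) coincides with the paper's, and your Schur-test remark for converting the rescaled kernel bounds of Notation \ref{noe2.7} into operator-norm bounds is sound. But the converse direction as you describe it has a genuine gap at exactly the point the paper spends most of its effort on. You set $g_0(x_0):=\mQ_{0,\,x_0}$ ``evaluated appropriately'' and then assume the leading terms match. A priori, however, condition (iii) only gives that the leading kernel behaviour of $T_p$ is $(\mQ_{0,\,x_0}\cP_{x_0})(\sqrt{p}Z,\sqrt{p}Z')$ for some \emph{polynomial} $\mQ_{0,\,x_0}(Z,Z')$, whereas the leading term of $T_{g_0,\,p}$ is the \emph{constant} polynomial $g_0(x_0)$ by \eqref{toe2.15}. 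If $\mQ_{0,\,x_0}$ had nonconstant homogeneous components, no choice of $g_0$ could match it. The paper's Proposition \ref{toet3.2} — that $\mQ_{0,\,x_0}(Z,Z')=\mQ_{0,\,x_0}(0,0)$ — is precisely the nontrivial statement needed here; it is proved by exploiting the constraint $P_pT_pP_p=T_p$ together with operator-norm estimates and a comparison of Taylor coefficients against the Bergman kernel expansion. Your proposal never identifies this as something to be proved.

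The second, related, problem is your claimed ``parity-induced cancellation'' of the half-integer orders. It is false that an odd polynomial times $\cP$ contributes only at the next integer order in operator norm: for example $z_1\cP$ is a bounded nonzero operator, so a nonvanishing odd coefficient at order $p^{-1/2}$ would genuinely obstruct an expansion of the form \eqref{toe2.3}. Note also that $Q_{1,\,x_0}(f)$ in \eqref{toe2.20} is a nonzero odd polynomial whenever $df\neq 0$, so odd-order kernel coefficients are certainly present for honest Toeplitz operators; what must be shown is not that they are harmless but that they are \emph{forced}, i.e.\ that $\mQ_{1,\,x_0}=Q_{1,\,x_0}(g_0)$ identically (the paper's identity \eqref{6.33e}, Lemma 4.18 of \cite{MM08b}). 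This again uses the hypothesis $P_pT_pP_p=T_p$ in an essential way and does not follow from parity alone. Without these two identities the induction step — that $p(T_p-T_{g_0,\,p})$ again satisfies (i)--(iii) — cannot be launched, since the coefficients of $p^{0}$ and $p^{-1/2}$ in the difference of the expansions \eqref{6.33a} and \eqref{6.33b} would not be known to vanish.
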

\begin{proof} In view of Lemma \ref{toet2.1} and Theorem \ref{toet2.3} it is easy to see that conditions (i)-(iii) are necessary. To prove the sufficiency we use the following strategy. 
We define inductively the sequence
$(g_l)_{l\geqslant0}$, $g_l\in \cC^\infty(X, \End(E))$ such that
\begin{equation}\label{toe3.4}
T_{p} = \sum_{l=0}^m P_{p}\, g_l \, p^{-l}\, P_{p} +\mO(p^{-m-1})\,,\quad\text{for every $m\geqslant 0$}\,.
\end{equation}
Let us start with the case $m=0$ of \eqref{toe3.4}. For an arbitrary but fixed $x_0\in X$, we set 
\begin{equation}\label{toe3.5}
g_0(x_0)=\mQ_{0,\,x_0}(0,0)\,\in\End(E_{x_0})\,.
\end{equation}
Then show that
\begin{equation}\label{toe3.60}
p^{-n} (T_{p} -  T_{g_0,\,p})_{x_0}(Z,Z^\prime)\cong \mO(p^{-1})\,,
\end{equation}
which implies the case $m=0$ of \eqref{toe3.4}, namely,
\begin{equation}\label{toe3.6}
T_{p}=P_p\,g_0\,P_p + \mO(p^{-1}).
\end{equation}
A crucial point here is the following result. 
\begin{proposition}[{\cite[Prop.\,4.11]{MM08b}}]\label{toet3.2}
In the conditions of Theorem \ref{toet3.1} we have
\[\mQ_{0,\,x_0}(Z,Z^{\prime})=\mQ_{0,\,x_0}(0,0)\]
for all $x_0\in X$ and all $Z,Z^{\prime}\in T_{x_0}X$.
\end{proposition}
The proof is quite technical, so we refer to \cite[p.\,585-90]{MM08b} for all the details.
The idea is the following. We observe first that $\mQ_{0,\,x_0}\in\End(E_{x_0}) \circ I_{\C\otimes E} [Z,Z^{\prime}]$,
and $\mQ_{0,\,x_0}$ is a polynomial in $z,\ov{z}^\prime$ (cf.\ \cite[Lemma\,4.12]{MM08b}).
For simplicity we denote 
$F_{x}=\mQ_{0,\,x}|_{\C\otimes E}\in \End(E_{x})$. 
Let $F_{x}=\sum_{i\geqslant0}F^{(i)}_{x}$ be the decomposition of $F_{x}$
in homogeneous polynomials $F^{(i)}_{x}$ of degree $i$. 
We show that $F^{(i)}_{x}$ vanish identically for $i>0$, that is,
\[
F^{(i)}_{x}(z,\ov{z}^{\prime})=0 \quad \text{for all $i>0$ 
and $z,z^{\prime}\in\C^n$}\,.\leqno{(\ast)}
\]
To see this, we extend $F^{(i)}_{x}$ to a section ${F}^{(i)}(x,y)$ in the neighbourhood of the diagonal of $X\times X$ and consider the poinwise adjoint
$\wi{F}^{(i)}(x,y)=({F}^{(i)}(y,x))^*$. By using a cut-off function $\eta$ in the neighbourhood of the diagonal we define operators
$F^{(i)}P_{p}$ and
$P_{p}\wi{F}^{(i)}$ having kernels
$$\eta(d(x,y))F^{(i)}(x,y)P_{p}(x,y)\quad \mbox{ and }\quad
\eta(d(x,y))P_{p}(x,y)\wi{F}^{(i)}(x,y)$$
with respect to $dv_X(y)$. Set
\[
\cT_{p}= T_{p}
-\sum_{i\leqslant\deg F_{x}} (F^{(i)}P_{p})\, p^{i/2}.
\]
It turns out that there exists $C>0$ such that for every $p>p_0$ and
$s\in L^2(X, L^p\otimes E)$ we have
\[
\norm{\cT_p\,s}_{L^2}\leqslant C p^{-1/2}\norm{s}_{L^2}\,,\quad
\|\cT_{p}^* s  \|_{L^2} \leqslant C p^{-1/2} \|s\|_{L^2}\,. \leqno{(\ast\ast)}
\]
Using $(\ast\ast)$  and comparing the the Taylor development of $\wi{F}^{(i)}$ 
in normal coordinates around $x$ to the expansion of the Bergman kernel we obtain in \cite[Lemma\,4.14]{MM08b}
\begin{equation*}\label{a6.41}
\frac{\partial ^\alpha \wi{F}^{(i)}}
{\partial Z^{\prime \alpha}}(x,0)=0\,, \quad \text{for} \,\,
  i-|\alpha|\geqslant j>0.
\end{equation*}
This implies (cf.\ \cite[Lemma\,4.15]{MM08b})
\begin{equation*}\label{6.381}
F^{(i)}_x(0,\ov{z}^{\prime})=0 \quad \text{for all $i>0$ 
and all $z^{\prime}\in\C^n$}\,.
\end{equation*}
The latter identity yields $(\ast)$ (cf.\ \cite[Lemma\,4.16]{MM08b}) and hence Proposition \ref{toet3.2}.  

Coming back to the proof of \eqref{toe3.60}, let us compare the asymptotic expansion of $T_p$ and
$T_{g_0,\,p}=P_p\,g_0\,P_p$. Using the Notation \ref{noe2.7},
the expansion \eqref{toe2.13} (for $k=1$) reads
\begin{equation}\label{6.33a}
p^{-n}T_{g_0,\,p,\,x_0}(Z,Z^\prime)  \cong
(g_{0}(x_0) \cP_{x_0}
+Q_{1,\,x_0}(g_0)\cP_{x_0}\,p^{-1/2})(\sqrt{p}Z,\sqrt{p}Z^\prime)
+\mO(p^{-1})\,,
\end{equation}
since $\mQ_{0,\,x_0}(g_0)=g_0(x_0) $ by \eqref{toe2.15}.
The expansion \eqref{toe3.2} (also for $k=1$) takes the form
\begin{equation}\label{6.33b}
p^{-n} T_{p,\,x_0} \cong (g_{0}(x_0) \cP_{x_0}
+\mQ_{1,\,x_0}\cP_{x_0}\,p^{-1/2})(\sqrt{p}Z,\sqrt{p}Z^\prime)+\mO(p^{-1})\,,
\end{equation}
where we have used Proposition \ref{toet3.2} and the definition \eqref{toe3.5}
of $g_0$.
Thus, subtracting \eqref{6.33a} from \eqref{6.33b} we obtain
\begin{equation}\label{6.33d}
p^{-n} (T_{p} -  T_{g_0,\,p})_{x_0}(Z,Z^\prime)
 \cong \big((\mQ_{1,\,x_0}-Q_{1,\,x_0}(g_0))\cP_{x_0}\big)
(\sqrt{p}Z,\sqrt{p}Z^\prime)\,p^{-1/2}+\mO(p^{-1})\,.
\end{equation}
Thus it suffices to prove:
\begin{equation}\label{6.33e}
F_{1,\,x}:=\mQ_{1,\,x}-Q_{1,\,x}(g_0)\equiv0\,.
\end{equation}
which is done in \cite[Lemma\,4.18]{MM08b}. This finishes the proof of \eqref{toe3.60} and \eqref{toe3.6}.
Hence the expansion
\eqref{toe3.4} of $T_p$  holds for $m=0$. Moreover, if $T_{p}$ is self-adjoint, then from (4.70), (4.71) follows that $g_{0}$ is also self-adjoint.  

We show inductively that \eqref{toe3.4} holds for every $m\in\N$. To handle \eqref{toe3.4}
for $m=1$ let us consider the operator $p(T_p-P_pg_0P_p)$.
The task is to show that $p\big(T_p-T_{g_0,\,p}\big)$
satisfies the hypotheses of Theorem \ref{toet3.1}.
The first two conditions are easily verified. To prove the third,
just subtract the asymptotics of
$T_{p,\,x_0}(Z,Z^\prime)$ (given by \eqref{toe3.2}) and
$T_{g_0,\,p,\,x_0}(Z,Z^\prime)$ (given by \eqref{toe2.13}).
Taking into account Proposition \ref{toet3.2} and \eqref{6.33e},
the coefficients of $p^0$ and
$p^{-1/2}$ in the difference vanish, which yields the desired conclusion.
Proposition \ref{toet3.2} and \eqref{toe3.6} applied to
$p(T_p-P_pg_0P_p)$ yield $g_1\in \cC^\infty(X,\End(E))$
such that \eqref{toe3.4} holds true for $m=1$.

We continue in this way the induction process to get \eqref{toe3.4}
for any $m$. This completes the proof of Theorem \ref{toet3.1}.
\end{proof}
Recall that the Poisson bracket 
$\{ \,\cdot\, , \,\cdot\, \}$
on $(X,2\pi \om)$ is defined as follows.  For $f, g\in \cC^\infty (X)$,
let $\xi_{f}$ be the Hamiltonian vector field generated by $f$, 
which is defined by $2 \pi i_{\xi_{f}}\om=df$. Then 
\begin{equation}\label{toe4.1a}
\{f, g\}:= \xi_{f}(dg).
\end{equation}
\begin{theorem}[{\cite[Th.\,1.1]{MM08b}, 
\cite[Th.\,7.4.1]{MM07}}]\label{toet4.1b}
The product of the Toeplitz operators 
$T_{f,\,p}$ and $T_{g,\,p}$, with $f,g\in\cC^\infty(X,\End(E))$, 
is a Toeplitz operator, i.e., it admits the asymptotic expansion
in the sense of \eqref{atoe2.1}{\rm:}
\begin{equation}\label{toe4.2b}
T_{f,\,p}\,T_{g,\,p}=\sum^\infty_{r=0}p^{-r}T_{C_r(f,g),\,p}+\mO(p^{-\infty}),
\end{equation} 
where $C_r$ are bi-differential operators, $C_r(f,g)\in\cC^\infty(X,\End(E))$,
$C_0(f,g)=fg$.

\noindent
If $f,g\in(\cC^\infty(X),\{\cdot,\cdot\})$ with the Poisson bracket
defined in \eqref{toe4.1a}, we have 
\begin{equation}\label{toe4.4}
\big[T_{f,\,p}\,,T_{g,\,p}\big]=\frac{\sqrt{-1}}{\, p}T_{\{f,\,g\},\,p}+\mO(p^{-2}).
\end{equation} 
\end{theorem}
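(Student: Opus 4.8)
The plan is to deduce both statements from the characterization of Toeplitz operators in Theorem~\ref{toet3.1}, combined with the $\C^n$ kernel calculus of Lemma~\ref{toet1.1}. First I would check that the family $\{T_{f,\,p}T_{g,\,p}\}$ satisfies the three conditions of Theorem~\ref{toet3.1}. Condition (i) is immediate from $P_p^2=P_p$, since $P_p(T_{f,\,p}T_{g,\,p})P_p=P_pfP_pgP_p=T_{f,\,p}T_{g,\,p}$. Condition (ii), the off-diagonal bound $\mO(p^{-\infty})$, follows exactly as in Lemma~\ref{toet2.1}: the composition kernel is $\int_X T_{f,\,p}(x,y)T_{g,\,p}(y,x')\,dv_X(y)$, and pairing the $\mO(p^{-\infty})$ decay of $T_{f,\,p}$ off the diagonal with the uniform polynomial growth of $|T_{g,\,p}|$ controls the integral away from the diagonal.

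The core is condition (iii), the near off-diagonal expansion of the composition. Here I would insert the two expansions of type \eqref{toe2.13} for $T_{f,\,p}$ and $T_{g,\,p}$, localize near $x_0$, rescale by $Z\mapsto\sqrt p\,Z$, and apply \eqref{toe1.15} together with Lemma~\ref{toet1.1} to multiply the model kernels. This produces an expansion of the same shape, with coefficients
\[
\mQ_{m,\,x_0}=\sum_{r+s=m}\cK\big[Q_{r,\,x_0}(f),\,Q_{s,\,x_0}(g)\big].
\]
Each $\mQ_{m,\,x_0}$ has parity $m$ because $\cK$ preserves parity and $Q_{r,\,x_0}$ has parity $r$, and smoothness in $x_0$ is inherited from Theorem~\ref{toet2.3}. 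Thus Theorem~\ref{toet3.1} applies and $\{T_{f,\,p}T_{g,\,p}\}$ is a Toeplitz operator, so Definition~\ref{toe-def} yields sections $C_r(f,g)\in\cC^\infty(X,\End(E))$ realizing \eqref{toe4.2b}. The leading coefficient is $\mQ_{0,\,x_0}=\cK[f(x_0),g(x_0)]=f(x_0)g(x_0)$ by \eqref{toe2.15}, so the principal symbol is $C_0(f,g)=fg$. That each $C_r$ is bi-differential in $f,g$ is visible from \eqref{toe2.14}: $Q_{r,\,x_0}(f)$ depends polynomially on finitely many derivatives $\partial^\alpha f_{x_0}(0)$ and $\cK$ is bilinear, so reconstructing $C_r$ through the inductive scheme of Theorem~\ref{toet3.1} only ever involves finitely many derivatives of $f$ and $g$.

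For the commutator I would specialize to $f,g\in\cC^\infty(X)$ and subtract the expansion of $T_{g,\,p}T_{f,\,p}$. Since $C_0(f,g)=fg=gf$, the $p^0$ term cancels, giving $[T_{f,\,p},T_{g,\,p}]=p^{-1}T_{C_1(f,g)-C_1(g,f),\,p}+\mO(p^{-2})$, so it suffices to identify the antisymmetric part of $C_1$. Following the inductive extraction in Theorem~\ref{toet3.1}, $C_1(f,g)$ is the constant polynomial $\mQ_{2,\,x_0}-Q_{2,\,x_0}(fg)$, and after antisymmetrizing, the symmetric term $Q_{2,\,x_0}(fg)$ drops out, as do the contributions $\cK[f(x_0),Q_{2,\,x_0}(g)]$ and $\cK[Q_{2,\,x_0}(f),g(x_0)]$ (which couple to the Bergman coefficients $J_{2,\,x_0}$ and the curvature and are symmetric in $f,g$). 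The surviving piece is $\cK[Q_{1,\,x_0}(f),Q_{1,\,x_0}(g)]-\cK[Q_{1,\,x_0}(g),Q_{1,\,x_0}(f)]$; inserting \eqref{toe2.20} and the evaluations \eqref{toe1.13}, only the $\tfrac1\pi\delta_{ij}$ terms in $\cK[\ov{z}'_i,z_j]$ contribute at $Z=Z'=0$, and a short computation identifies the result with $\sqrt{-1}\{f,g\}$ for the normalization $2\pi\om$ of \eqref{toe4.1a}, yielding \eqref{toe4.4}.

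The step I expect to be hardest is condition (iii): turning the formal product of the two expansions into a rigorous one. One must control the localization error, carry the $\kappa_{x_0}$-weights of Notation~\ref{noe2.7} through the composition, estimate the remainder integrals (the exponentially decaying $\Psi$-terms and the $\mO(p^{-\infty})$ tails) uniformly in $x_0$, and justify exchanging the integration over $T_{x_0}X$ with the expansions. Equally delicate is the bookkeeping that converts the half-integer kernel expansion in $p^{-r/2}$ into the integer-power operator expansion of Definition~\ref{toe-def}; this is precisely where Proposition~\ref{toet3.2} and the vanishing \eqref{6.33e} of the odd coefficients are indispensable.
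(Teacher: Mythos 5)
Your proposal is correct and follows essentially the same route as the paper: verify conditions (i)--(iii) of Theorem~\ref{toet3.1} for $T_{f,\,p}T_{g,\,p}$, obtain the composed kernel expansion with coefficients $\sum_{r_1+r_2=r}\cK[Q_{r_1,\,x_0}(f),Q_{r_2,\,x_0}(g)]$ exactly as in \eqref{toe4.6}--\eqref{toe4.7}, read off $C_0(f,g)=fg$, and derive \eqref{toe4.4} from the antisymmetrized $C_1$ via the kernel identities \eqref{toe1.12}--\eqref{toe1.13}, which is the first of the two methods the paper indicates for \eqref{toe4.16}.
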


\begin{proof}
Firstly, it is obvious that $P_p\,T_{f,\,p}\,T_{g,\,p}\,P_p=T_{f,\,p}\,T_{g,\,p}$. 
Lemmas \ref{toet2.1} and \ref{toet2.3} imply
$T_{f,\,p}\,T_{g,\,p}$ verifies \eqref{toe3.1}.
Like in \eqref{toe2.17}, we have
for $Z,Z^\prime \in T_{x_0}X$, $\abs{Z},\abs{Z^{\prime}}<\var/4$:
\begin{multline}\label{toe4.5}
(T_{f,\,p}\,T_{g,\,p})_{x_0}(Z,Z^\prime)= \int_{{ T_{x_0}X}}
T_{f,\,p,\,x_0}(Z,Z^{\prime\prime})
\rho(4|Z^{\prime\prime}|/\var)
T_{g,\,p,\,x_0}(Z^{\prime\prime},Z^{\prime}) \\
\times \kappa_{x_0}(Z^{\prime\prime})
\,dv_{TX}(Z^{\prime\prime})+ \cO(p^{-\infty}).
\end{multline}  
By Lemma \ref{toet2.3} and \eqref{toe4.5}, we deduce as in the proof of 
Lemma \ref{toet2.3}, that for $Z,Z^\prime \in T_{x_0}X$, 
$\abs{Z},\abs{Z^{\prime}}<\var/4$, we have
\begin{align} \label{toe4.6}
p^{-n}(T_{f,\,p}\,T_{g,\,p})_{x_0}(Z,Z^\prime)\cong 
\sum^k_{r=0}(Q_{r,\,x_0}(f,g)\cP_{x_0})(\sqrt{p}\,Z,\sqrt{p}\,Z^{\prime})
p^{-\frac{r}{2}} + \mO(p^{-\frac{k+1}{2}}),
\end{align}
and with the notation \eqref{toe1.6}, 
\begin{align} \label{toe4.7}
Q_{r,\,x_0}(f,g)= \sum_{r_1+r_2=r}  \cK[Q_{r_1,\,x_0}(f), Q_{r_2,\,x_0}(g)].
\end{align}
Thus $T_{f,\,p}\,T_{g,\,p}$ is a Toeplitz operator by Theorem \ref{toet3.1}.
Moreover, it follows from the proofs of Lemma \ref{toet2.3} 
and Theorem \ref{toet3.1}
that $g_l=C_l(f,g)$, where $C_l$ are bi-differential operators. 

{}From \eqref{toe1.6}, \eqref{toe2.15} and \eqref{toe4.7}, we get
\begin{equation} \label{toe4.8}
C_0(f,g)(x)= Q_{0,\,x}(f,g)
= \cK[Q_{0,\,x}(f), Q_{0,\,x}(g)]
= f(x)g(x)\,.
\end{equation}
The commutation relation \eqref{toe4.4} follows from 
\begin{equation}\label{toe4.16}
C_1(f,g)(x)-C_1(g,f)(x)=\imat\{f,g\} \,\Id_{E}\,.
\end{equation} 
There are two ways to prove \eqref{toe4.16}. One is to compute directly the difference and to use some of the identities \eqref{toe1.12}. This method works also for symplectic manifolds, see \cite[p.\,593-4]{MM08b}, \cite[p.\,311]{MM07}.
On the other hand, in the K\"ahler case one can compute explicitly each coefficient
$C_1(f,g)$ (which in the general symplectic case is more involved), and then take the difference. 
The formula for $C_1(f,g)$ is given in the next theorem.
This finishes the proof of Theorem \ref{toet4.1}.
\end{proof}
\begin{theorem}[{\cite[Th.\,0.3]{MM10}}]\label{toec1}
Let $f,g\in\cC^\infty(X,\End(E))$. We have
 \begin{equation}\label{toe4.3} \begin{split}
 C_0(f,g)=&fg, \\
C_1(f,g)=&-  \frac{1}{2\pi}
\big\langle \nabla^{1,0} f, \ov{\partial}^{E} g\big\rangle_{\om}\in 
\cC^{\infty}(X,\End(E)),\\
C_2(f,g)=& \, \bb_{2,\,f,\,g} - \bb_{2,\,fg}- \bb_{1,\,C_1(f,\,g)}.
\end{split}\end{equation}
If $f,g\in \cC^\infty(X)$, then
 \begin{align}\label{toe4.3a} \begin{split}
C_2(f,g)= &\,
\frac{1}{8\pi^2 } \left\langle  D^{1,0}\partial f, D^{0,1}\ov{\partial} g\right\rangle
+ \frac{\sqrt{-1}}{4\pi^2 } \left\langle  \Ric_\om, 
\partial f \wedge\ov{\partial} g\right\rangle\\
 &-\frac{1}{4\pi^2} \left\langle\partial f\wedge \ov{\partial} g, R^E\right\rangle_{\om}\,.
\end{split}\end{align}
\end{theorem}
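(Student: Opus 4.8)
The plan is to compute the coefficients $C_0(f,g)$, $C_1(f,g)$, $C_2(f,g)$ by exploiting the explicit kernel formula \eqref{toe4.7}, which expresses each $Q_{r,\,x_0}(f,g)$ as a sum of $\cK$-brackets of the Toeplitz-expansion polynomials $Q_{r_i,\,x_0}$. The key link between these kernel coefficients and the $C_r$ is that, by the proof of Theorem \ref{toet3.1}, the section $g_l=C_l(f,g)$ is recovered from the diagonal values $Q_{2l,\,x}(f,g)(0,0)$ after subtracting the contributions of the already-determined lower symbols. Concretely, for $r=0$ formula \eqref{toe4.8} already gives $C_0(f,g)=fg$, so the real content is $C_1$ and $C_2$.

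First I would compute $C_1(f,g)$. Evaluating \eqref{toe4.7} at $r=2$ and taking $Z=Z'=0$ gives $Q_{2,\,x}(f,g)(0,0)$ as a sum $\sum_{r_1+r_2=2}\cK[Q_{r_1,\,x}(f),Q_{r_2,\,x}(g)](0,0)$; the relation $C_1(f,g)=\bb_{1,f,g}-\bb_{1,fg}$ (the diagonal coefficient of the product minus that of the single operator $T_{fg,\,p}$) then isolates the genuinely bilinear, first-order piece. Here I would feed in the explicit lower-order polynomials $Q_{0,\,x}(f)=f(x)$, $Q_{1,\,x}(f)$ from \eqref{toe2.20}, together with $J_{0,\,x_0}=1$, $J_{1,\,x_0}=0$ from \eqref{bk2.33}, and evaluate the resulting $\cK$-brackets using the multiplication table \eqref{toe1.12}--\eqref{toe1.13}. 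The only brackets producing a non-constant (and hence surviving) contribution come from pairing a $z_j$-type factor from $f$ with a $\bar z_j'$-type factor from $g$, which after applying $\cK[\bar z_i',z_j]=\tfrac{1}{\pi}\delta_{ij}+\bar z_i' z_j$ yields a $\tfrac{1}{\pi}\delta_{ij}$ term; converting the holomorphic and antiholomorphic derivatives back into the invariant objects $\nabla^{1,0}f$ and $\ov\partial^{E}g$ produces the stated $C_1(f,g)=-\tfrac{1}{2\pi}\langle\nabla^{1,0}f,\ov\partial^{E}g\rangle_{\om}$.

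For $C_2(f,g)$ the structural identity $C_2(f,g)=\bb_{2,\,f,\,g}-\bb_{2,\,fg}-\bb_{1,\,C_1(f,g)}$ reduces everything to data already available: the diagonal coefficient $\bb_{2,\,f,\,g}:=Q_{4,\,x}(f,g)(0,0)$ of the composition, the second Toeplitz coefficient $\bb_{2,\,fg}$ of $T_{fg,\,p}$ from Theorem \ref{toet4.6}, and the first coefficient $\bb_{1,\,C_1(f,g)}$ of the Toeplitz operator with symbol $C_1(f,g)$, again from \eqref{bk4.3}. When $f,g$ are scalar I would substitute the explicit formulas \eqref{abk4.4} for $\bb_{2,\,f}$ and \eqref{bk4.3} for $\bb_{1,\,f}$ into this identity; the purely $f$- and $g$-diagonal contributions cancel against $\bb_{2,\,fg}$ and $\bb_{1,\,C_1(f,g)}$, leaving only the genuinely bilinear second-order terms, which organize into $\langle D^{1,0}\partial f, D^{0,1}\ov\partial g\rangle$, the Ricci pairing $\langle\Ric_\om,\partial f\wedge\ov\partial g\rangle$, and the twisting-curvature pairing $\langle\partial f\wedge\ov\partial g,R^E\rangle_{\om}$, with the coefficients $\tfrac{1}{8\pi^2}$, $\tfrac{\sqrt{-1}}{4\pi^2}$, $-\tfrac{1}{4\pi^2}$ respectively.

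The main obstacle will be the $C_2$ computation: expanding $Q_{4,\,x}(f,g)(0,0)$ via \eqref{toe4.7} requires the higher Bergman polynomials $J_{2,\,x_0}$, $J_{3,\,x_0}$ (hence the operators $\cF_{r,\,x_0}$ from \eqref{bk2.31}--\eqref{bk2.32} and the rescaled-Laplacian coefficients $\mO_2,\mO_3,\mO_4$), together with the second-order Taylor data of $f$ and $g$, and the bookkeeping of many $\cK$-brackets of higher-degree polynomials is substantial and error-prone. The saving grace is that one need not recompute these polynomials from scratch: the identity $C_2=\bb_{2,\,f,\,g}-\bb_{2,\,fg}-\bb_{1,\,C_1(f,g)}$ lets me reuse the already-computed diagonal coefficients $\bb_1,\bb_2$ of the \emph{single} Toeplitz operators, so that the difficult kernel-level calculation enters only through the composite coefficient $\bb_{2,\,f,\,g}$, and all the curvature-dependent pieces that are common to $f$, $g$ individually cancel, leaving a manageable bilinear expression to verify against \eqref{toe4.3a}.
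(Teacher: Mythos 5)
Your overall route is the one the paper intends: the survey does not itself prove this theorem (it quotes \cite[Th.\,0.3]{MM10}), but the ingredients it sets up --- formula \eqref{toe4.7} for $Q_{r,\,x_0}(f,g)$, the recovery of $g_l=C_l(f,g)$ from diagonal kernel data in the proof of Theorem \ref{toet3.1}, and the coefficients $\bb_{1,f},\bb_{2,f}$ of Theorem \ref{toet4.6} --- are exactly what you propose to combine. Your structural identities $C_1(f,g)=\bb_{1,f,g}-\bb_{1,fg}$ and $C_2(f,g)=\bb_{2,f,g}-\bb_{2,fg}-\bb_{1,C_1(f,g)}$ are correct consequences of matching the diagonal expansions of $T_{f,p}T_{g,p}$ and $\sum_r p^{-r}T_{C_r(f,g),p}$, and the second of them is literally the third line of \eqref{toe4.3}.

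There is, however, a concrete error in your account of where $C_1$ comes from. In $\cK[Q_{1,x}(f),Q_{1,x}(g)]$ the bracket $\cK[\ov z_i^{\,\prime},z_j]=\tfrac1\pi\delta_{ij}+\ov z_i^{\,\prime}z_j$ occurs with $\ov z_i^{\,\prime}$ in the $f$-slot and $z_j$ in the $g$-slot, so its constant term is weighted by $\tfrac{\partial f}{\partial\ov z_i}(0)\tfrac{\partial g}{\partial z_j}(0)$ and yields $\tfrac1\pi\sum_j\partial_{\ov z_j}f\,\partial_{z_j}g=\tfrac{1}{2\pi}\langle\ov\partial^Ef,\nabla^{1,0}g\rangle_\omega$ --- the transposed pairing with the opposite sign relative to the asserted $-\tfrac{1}{2\pi}\langle\nabla^{1,0}f,\ov\partial^Eg\rangle_\omega$. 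The correct formula does not come from this single bracket: you must also keep the cross contributions of $\cK[Q_{0,x}(f),Q_{2,x}(g)]+\cK[Q_{2,x}(f),Q_{0,x}(g)]$ and, crucially, the term $-\tfrac{1}{2\pi}\langle\nabla f,\nabla g\rangle$ that $-\bb_{1,fg}$ contributes through the product rule $\Delta^E(fg)=(\Delta^Ef)g+f\,\Delta^Eg-2\sum_k\nabla^E_{e_k}f\cdot\nabla^E_{e_k}g$; since $\langle\nabla f,\nabla g\rangle=\langle\partial f,\ov\partial g\rangle+\langle\ov\partial f,\partial g\rangle$, the sum $\tfrac{1}{2\pi}\langle\ov\partial f,\partial g\rangle-\tfrac{1}{2\pi}\langle\nabla f,\nabla g\rangle=-\tfrac{1}{2\pi}\langle\partial f,\ov\partial g\rangle$ is what produces the stated answer. (Consistently, the antisymmetric part of $\cK[Q_1(f),Q_1(g)]$ alone does give $C_1(f,g)-C_1(g,f)$, which is why \eqref{toe4.16} can be read off from it; the symmetric part cannot.) As literally described, your procedure would output the wrong $C_1$. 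For $C_2$, the strategy is sound, but be aware that the identity does not spare you the computation of $\bb_{2,f,g}=Q_{4,x}(f,g)(0,0)$, which requires $\cK[Q_1(f),Q_3(g)]$, $\cK[Q_2(f),Q_2(g)]$, etc., hence $J_2$, $J_3$ and the operators $\mO_2$, $\mO_3$ from \eqref{bk2.31}; that is where essentially all of the work of \cite{MM10} lies.
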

The next result and Theorem \ref{toet4.1b} show that 
the Berezin-Toeplitz quantization has the correct semi-classical behavior.
\begin{theorem}\label{toet4.2} For $f\in \cC^\infty(X, \End(E))$,
the norm of $T_{f,\,p}$ satisfies
\begin{equation}\label{toe4.17}
\lim_{p\to\infty}\norm{T_{f,\,p}}={\norm f}_\infty
:=\sup_{0\neq u\in E_x,\, x\in X} |f(x)(u)|_{h^{E}}/ |u|_{h^{E}}.
\end{equation}
\end{theorem}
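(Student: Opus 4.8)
The plan is to prove separately the two inequalities $\limsup_{p\to\infty}\norm{T_{f,\,p}}\leqslant\norm f_\infty$ and $\liminf_{p\to\infty}\norm{T_{f,\,p}}\geqslant\norm f_\infty$. The upper bound holds for every $p$ and is elementary: since $P_p$ is an orthogonal projection we have $\norm{P_p}\leqslant 1$, while pointwise multiplication by $f$ is a bounded operator on $L^2(X,L^p\otimes E)$ of norm at most $\norm f_\infty$, because $\abs{f(x)(u)}_{h^E}\leqslant\norm f_\infty\,\abs{u}_{h^E}$ for every $x\in X$ and $u\in E_x$. Hence $\norm{T_{f,\,p}}=\norm{P_p\,f\,P_p}\leqslant\norm{P_p}\,\norm f_\infty\,\norm{P_p}\leqslant\norm f_\infty$.

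For the lower bound, fix $\var>0$ and choose a point $x_0\in X$ together with a unit vector $u_0\in E_{x_0}$ such that $\abs{f(x_0)(u_0)}_{h^E}>\norm f_\infty-\var$; such a choice exists by the definition of $\norm f_\infty$ as an operator-norm supremum. Since $\norm f_\infty$ is the largest singular value of $f(x_0)$ rather than its numerical radius, I would test $T_{f,\,p}$ on two \emph{distinct} coherent states, one carrying the input direction $u_0$ and one carrying the output direction $w_0:=f(x_0)(u_0)/\abs{f(x_0)(u_0)}_{h^E}$. Concretely, fix a unit vector $\ell\in L_{x_0}$, set $v=\ell^{\otimes p}\otimes u_0$ and $w=\ell^{\otimes p}\otimes w_0$ in $(L^p\otimes E)_{x_0}$, and define $S_p(y)=P_p(y,x_0)v$ and $S_p'(y)=P_p(y,x_0)w$, which are holomorphic and hence lie in $H^0(X,L^p\otimes E)$.

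The key identities come from the reproducing property of the Bergman kernel. It gives $\norm{S_p}^2=\langle P_p(x_0,x_0)v,v\rangle$ and $\norm{S_p'}^2=\langle P_p(x_0,x_0)w,w\rangle$, so by the diagonal expansion of Corollary \ref{bkt2.18} (with $\bb_0=\Id_E$) both equal $p^n+\mO(p^{n-1})$. Likewise, since $T_{f,\,p}=P_p\,T_{f,\,p}\,P_p$, its integral kernel satisfies the collapsing relation $\langle T_{f,\,p}S_p,S_p'\rangle=\langle T_{f,\,p}(x_0,x_0)v,w\rangle$, where $T_{f,\,p}(x_0,x_0)\in\End(E)_{x_0}$ because $\End(L^p)=\C$. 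The diagonal Toeplitz expansion of Corollary \ref{toec2.1}, together with $\bb_{0,\,f}=f$ from Theorem \ref{toet4.6}, then yields $T_{f,\,p}(x_0,x_0)=p^n f(x_0)+\mO(p^{n-1})$. As the $L^p$-factor has $\abs{\ell^{\otimes p}}=1$ and acts trivially, the choice of $w_0$ gives $\langle T_{f,\,p}(x_0,x_0)v,w\rangle=p^n\langle f(x_0)(u_0),w_0\rangle_{h^E}+\mO(p^{n-1})=p^n\abs{f(x_0)(u_0)}_{h^E}+\mO(p^{n-1})$.

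Combining these expansions,
\[
\norm{T_{f,\,p}}\geqslant\frac{\abs{\langle T_{f,\,p}S_p,S_p'\rangle}}{\norm{S_p}\,\norm{S_p'}}=\frac{p^n\abs{f(x_0)(u_0)}_{h^E}+\mO(p^{n-1})}{p^n+\mO(p^{n-1})}\longrightarrow\abs{f(x_0)(u_0)}_{h^E}>\norm f_\infty-\var
\]
as $p\to\infty$. Hence $\liminf_{p\to\infty}\norm{T_{f,\,p}}\geqslant\norm f_\infty-\var$, and letting $\var\to0$ furnishes the reverse inequality and thus the claimed limit. The only genuinely delicate point is the passage from the scalar quantity naturally produced by a single coherent state to the full operator norm $\norm f_\infty$: this is precisely what forces the use of two coherent states with mismatched $E$-directions $u_0$ and $w_0$, together with the identity $\bb_{0,\,f}=f$ to control $T_{f,\,p}(x_0,x_0)$ to leading order. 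Once these are in place, the remaining estimates are routine consequences of the diagonal expansions already established.
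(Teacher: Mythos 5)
Your proof is correct. The paper's own argument rests on the same basic device — a coherent state peaked at a point $x_0$ where $|f(x_0)(u_0)|_{h^E}$ attains (or nearly attains) $\norm f_\infty$ — but the mechanics differ. The paper takes the single section $S^p_{x_0}=p^{-n/2}P_p(e_L^{\otimes p}\otimes u_0)$ and deduces from the near off-diagonal expansion \eqref{toe2.9} the approximate-eigenvector estimate \eqref{toe4.18}, $\|T_{f,\,p}S^p_{x_0}-f(x_0)S^p_{x_0}\|_{L^2}\leqslant Cp^{-1/2}\|S^p_{x_0}\|_{L^2}$, from which the lower bound follows because $S^p_{x_0}$ concentrates at $x_0$ in the direction $u_0$. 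You instead pair against a second coherent state aligned with the output direction $w_0$ and use the collapsing identity $\langle T_{f,\,p}S_p,S_p'\rangle=\langle T_{f,\,p}(x_0,x_0)v,w\rangle$, which reduces everything to the diagonal expansions of Corollary \ref{bkt2.18} and Corollary \ref{toec2.1} (i.e.\ $\bb_0=\Id_E$ and $\bb_{0,\,f}=f$); no off-diagonal control of $P_p$ or $T_{f,\,p}$ is needed. Your version is more economical in its inputs and makes explicit the point — left implicit in the paper — that $\norm f_\infty$ is an operator norm rather than a numerical radius, which is exactly what the mismatched directions $u_0$ and $w_0$ handle. What the paper's one-state argument buys in exchange is the stronger statement that $S^p_{x_0}$ is an approximate eigenstate of $T_{f,\,p}$ with eigenvalue $f(x_0)$, which is of independent interest and underlies the subsequent remark that the error $Cp^{-1/2}$ improves to $Cp^{-1}$ when $f$ is a real function (so that $df(x_0)=0$).
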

\begin{proof}
Take a point $x_0\in X$ and $u_0\in E_{x_0}$ with $|u_0|_{h^{E}}=1$ 
such that $|f(x_0)(u_0)|={\norm f}_\infty$.
Recall that in Section \ref{s3.2}, we trivialized the bundles $L$, $E$ 
in normal coordinates near $x_0$, 
and $e_L$ is the unit frame of $L$ which trivializes $L$. 
Moreover, in this normal coordinates, $u_0$ is a trivial section of $E$.
Considering the sequence of sections 
$S^p_{x_0}=p^{-n/2}P_p(e_L^{\otimes p}\otimes u_0)$, 
we have by \eqref{toe2.9},
\begin{equation}\label{toe4.18}
\big\|T_{f,\,p}\,S^p_{x_0}-f(x_0)S^p_{x_0}\big\|_{L^2}
\leqslant \frac{C}{\sqrt{p}}\norm{S^p_{x_0}}_{L^2}\,, 
\end{equation}
which immediately implies \eqref{toe4.17}.
\end{proof}
Note that if $f$ is a real function, then $df(x_0)=0$, so we can improve 
the bound  $Cp^{-1/2}$ in \eqref{toe4.18} to $Cp^{-1}$.

\begin{remark}\label{toer1}
\textbf{(i)} Relations \eqref{toe4.4} and \eqref{toe4.17} were first proved in some special cases:
in \cite{KliLe:92} for Riemann surfaces, in \cite{Cob:92} for $\C^n$
and in \cite{BLU:93} for bounded symmetric domains in $\C^n$,
by using explicit calculations.
Then Bordemann, Meinrenken and Schlichenmaier \cite{BMS94}
treated the case of a compact K{\"a}hler manifold (with $E=\C$)
using the theory of Toeplitz structures (generalized Szeg{\"o} operators) by
Boutet de Monvel and Guillemin \cite{BouGu81}.
Moreover, Schlichenmaier \cite{Schlich:00}
(cf. also  \cite{KS01}, \cite{Charles03})
continued this train of thought and showed that for any $f,g\in \cC^\infty(X)$,
 the product $T_{f,\,p}\,T_{g,\,p}$ has an asymptotic expansion
 \eqref{toe4.2} and constructed geometrically an associative star product.
\\[2pt]
\textbf{(ii)}
The construction of the star-product can be carried out even in the presence of a twisting vector bundle $E$.
Let $f,g\in\cC^\infty(X,\End(E))$. Set
\begin{equation}\label{toe4.4c}
f*_{\hbar}g:=\sum_{k=0}^\infty C_k(f,g) \hbar^{k}\in\cC^\infty(X,\End(E))[[\hbar]]\,,
\end{equation}
where $C_{r}(f,g)$ are determined by \eqref{toe4.2}. Then \eqref{toe4.4c} defines an associative star-product on $\cC^\infty(X,\End(E))$ called \emph{Berezin-Toeplitz star-product} (cf.\ \cite{KS01,Schlich:00} for the K\"ahler case with $E=\C$ 
and \cite{MM07,MM08b} for the symplectic case and arbitrary twisting bundle $E$).
The associativity of the star-product \eqref{toe4.4c} follows immediately 
from the associativity rule for the composition of Toeplitz operators, 
$(T_{f,\,p}\circ T_{g,\,p})\circ T_{k,\,p}=
T_{f,\,p}\circ (T_{g,\,p}\circ T_{k,\,p})$ for any  
$f,g,k\in\cC^\infty(X,\End(E))$, and from the asymptotic expansion 
\eqref{toe4.2} applied to both sides of the latter equality.

The coefficients $C_r(f,g)$, $r=0,1,2$ are given by \eqref{toe4.3}. Set 
\begin{equation}\label{toe4.5a}
\{\!\{f,g\}\!\}:=
\frac{1}{2\pi\imat}
\big(\langle \nabla^{1,0} g, \ov{\partial}^{E} f\rangle_{\om}-\langle \nabla^{1,0} f, \ov{\partial}^{E} g\rangle_{\om}\big)\,.
\end{equation}
If $fg=gf$ on $X$ we have 
\begin{equation}\label{toe4.4b}
\big[T_{f,\,p}\,,T_{g,\,p}\big]=\tfrac{\sqrt{-1}}{\, p}\,T_{\{\!\{f,g\}\!\},\,p}
+\mO\big(p^{-2}\big)\,,\quad p\to\infty.
\end{equation}
Due to the fact that $\{\!\{f,g\}\!\}=\{f,g\}$ if $E$ is trivial and 
comparing \eqref{toe4.4} to \eqref{toe4.4b},  one can regard 
$\{\!\{f,g\}\!\}$ defined in \eqref{toe4.5a} 
as a non-commutative Poisson bracket.
\subsection{Quantization of compact Hermitian manifolds}
Throughout Sections \ref{s3.2}-\ref{s3.6} we supposed that the Riemannian metric $g^{TX}$ was the metric associated to $\omega$, that is, $g^{TX}(u,v)=\omega(u,Jv)$ (or, equivalently, $\Theta=\omega$). The results presented so far still hold for a general non-K\"ahler Riemannian metric $g^{TX}$. 

Let us denote the metric associated to $\omega$ by $g^{TX}_{\om}:=\om(\cdot,J\cdot)$.
The volume form of $g^{TX}_{\om}$ is given by $dv_{X,\,\om} =(2\pi)^{-n}{\det}(\dot{R}^L) dv_{X}$ (where $dv_{X}$ is the volume form of $g^{TX}$).
Moreover, $h^E_\om:={\det} (\frac{\dot{R}^L}{2\pi})^{-1} h^E$ 
defines a metric on $E$. 
We add a subscript $\om$ to indicate the objects associated to $g^{TX}_{\om}$, $h^L$ and $h^E_{\om}$.
Hence $\left\langle\,\cdot,\cdot \right \rangle_\om$ denotes the $L^2$ Hermitian product on 
$\cC^\infty (X, L^p\otimes E)$
induced by $g^{TX}_\om$, $h^L$, $h^E_\om$. This product is equivalent to the product  
$\left\langle\,\cdot,\cdot \right \rangle$ induced by $g^{TX}$, $h^L$, $h^E$.

Moreover, $H^{0}(X, L^p\otimes E)$ does not depend on the Riemannian metric on $X$ or on the Hermitian metrics on $L$, $E$. Therefore, the orthogonal projections from 
$(\cC^\infty (X, L^p\otimes E), \left\langle\,\cdot,\cdot \right \rangle_\om)$ and 
$(\cC^\infty (X, L^p\otimes E), \left\langle\,\cdot,\cdot \right \rangle)$
onto $H^{0}(X, L^p\otimes E)$ are the same. Hence $P_p=P_{p,\,\om}$ and therefore $T_{f,\,p}=T_{f,\,p,\,\om}$ as operators. 
However, their kernels are different.
If $P_{p,\,\om}(x,x')$, $T_{f,\,\om,\,p}(x,x')$, ($x,x'\in X$), denote the smooth kernels 
of $P_{p,\,\om}$, $T_{f,\,p,\,\om}$ with respect to $dv_{X,\om}(x')$,
we have
\begin{equation}\label{bk2.95}
\begin{split}
&P_{p}(x,x')=(2\pi)^{-n} {\det}(\dot{R}^L)(x') P_{p,\,\om}(x,x')\,,\\
&T_{f,\,p}(x,x')=(2\pi)^{-n} {\det}(\dot{R}^L)(x') T_{f,\,p,\,\om}(x,x')\,.
\end{split}
\end{equation}
Now, for the kernel $ P_{p,\,\om}(x,x')$, we can apply Theorem \ref{tue17} 
since $g^{TX}_\om(\cdot, \cdot)= \om(\cdot, J\cdot)$ 
is a K\"ahler metric on $TX$.
We obtain in this way the expansion of the Bergman kernel for a non-K\"ahler Riemannian metric $g^{TX}$ on $X$, see \cite[Th.\,4.1.1,\,4.1.3]{MM07}. 
Of course, the coefficients $\bb_r$ reflect in this case the presence of $g^{TX}$. For example
\begin{align}\label{abk2.51}
 \bb_0={\det}(\dot{R}^L/(2\pi)) \Id_{E},
\end{align}
and 
\begin{equation} \label{abk2.7}
\bb_1 = \frac{1}{8\pi}\det\Big(\frac{\dot{R}^L}{2\pi}\Big)
\Big[r^X_\om -2 \Delta_\om \Big(\log({\det} (\dot{R}^L))\Big)
+ 4  \sqrt{-1} \langle R^E,\om\rangle_\om \Big]\,.
\end{equation}
Using the expansion of the Bergman kernel $P_{p,\,\om}(\cdot,\cdot)$ we can deduce the expansion of the Toeplitz operators $T_{f,\,p,\,\om}$ and their kernels, analogous to Theorem \ref{toet2.3}, Corollary \ref{toec2.1} and Theorem \ref{toet4.1b}. By \eqref{bk2.95}, the coefficients of these expansion satisfy 
\begin{equation}\label{bk2.951}
\begin{split}
&\bb_{f,\,r}=(2\pi)^{-n} {\det}(\dot{R}^L)\bb_{f,\,r,\,\om}\,,\\
&C_r(f,g)=C_{r,\,\om}(f,g)\,.
\end{split}
\end{equation}

Since $X$ is compact, \eqref{bk2.95}  
allowed to reduce the general situation considered here
to the case $\omega=\Theta$ and apply Theorem  \ref{toet4.5}. 
However, if $X$ is not compact, the trick of using \eqref{bk2.95}
does not work anymore, because the operator associated to  
$g^{TX}_\om$, $h^L$, $h^E_\om$ might not have a spectral gap.
But under the hypotheses of Theorem \ref{noncompact0} the spectral gap for $D_p$ exists, so we can extend these results to certain complete Hermitian manifolds in the next section.
\end{remark}
\subsection{Quantization of complete Hermitian manifolds}
We return to the general situation of a complete manifold already considered in \S \ref{spec-gap}.
The following result, obtained in \cite[Th.\,3.11]{MM08a}, extends 
the asymptotic expansion of the Bergman kernel to complete manifolds.

\begin{theorem} \label{noncompact1}
Let $(X,\Theta)$ be a complete Hermitian manifold, $(L,h^L)$, $(E,h^E)$ be Hermitian holomorphic vector bundles of rank one and $r$, respectively.
Assume that the hypotheses of Theorem \ref{noncompact0} are fulfilled.
Then the kernel $P_p(x,x')$
has a full off--diagonal asymptotic expansion analogous to 
that of Theorem \ref{tue17} uniformly for any $x,x'\in K$, 
a compact set of $X$.
If $L=K_X:=\det(T^{*(1,0)}X)$ is the canonical line bundle on $X$, the first two conditions in (5.5) are to be replaced by    \begin{equation*}    \text{$h^L$ is induced by $\Theta$ and     $\sqrt{-1}R^{\det}<-\varepsilon\Theta$,   $\sqrt{-1}R^E> -C\Theta \Id_E$\,. }    
\end{equation*}
\end{theorem}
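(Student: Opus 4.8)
The plan is to reproduce the proof of Theorems \ref{tue16}--\ref{tue17}, replacing the compact spectral gap of Theorem \ref{bkt1.1} by the gap \eqref{ell4,1} furnished by Theorem \ref{noncompact0}. The structural point is that, once the far off-diagonal decay is in hand, the entire analysis is purely local; the non-compactness of $X$ enters only to ensure that the functional calculus of $D_p$ is available and that finite propagation speed applies, while uniformity of the estimates is obtained from the compactness of $K$.

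First I would invoke the Gaffney-type Lemma (essential self-adjointness of $\square_p$ on the complete manifold $X$) preceding Theorem \ref{noncompact0}, so that $D_p$ is self-adjoint and $F(D_p)$ is well defined by the spectral theorem for $F$ as in \eqref{0c3}. The wave operator $\cos(tD_p)$ then has finite propagation speed, whence $F(D_p)(x,x')=0$ for $d(x,x')\geqslant\var$ and $F(D_p)(x,\cdot)$ depends only on $D_p|_{B^X(x,\var)}$. Since $\spec(\square_p)\subset\{0\}\cup[\,pC_1,\infty[$, the nonzero spectrum of $D_p$ is bounded below by $\sqrt{2pC_1}$, and because $F\in\mathcal{S}(\R)$ with $F(0)=1$ decays faster than any power, the operator $F(D_p)-P_p$ has kernel of order $\cO(p^{-\infty})$ in every $\cC^m$-norm, uniformly for $x,x'$ in the compact set $K$. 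This is the analogue of \eqref{0c7}; combined with finite propagation speed it yields the far off-diagonal estimate \eqref{toe2.6a} on $K$ and localizes the behaviour of $P_p(x_0,\cdot)$ to $B^X(x_0,\var)$ for each $x_0\in K$.

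Having localized, I would pull back all data by $\exp^X_{x_0}$ to $B^{T_{x_0}X}(0,\var)$, extend them to $T_{x_0}X\cong\R^{2n}$, and rescale as in \eqref{bk2.21}, obtaining the Taylor expansion \eqref{bk2.22} with leading term the model operator $\cL$ whose kernel is given by \eqref{toe1.3}. The resolvent analysis of Bismut-Lebeau \cite{BL91} then produces the polynomials $J_{r,\,x_0}$ and the expansion \eqref{toe2.9} verbatim, the spectrum of $\cL$ being explicit by Theorem \ref{bkt2.17}. The sole new ingredient is uniformity over $K$: compactness of $K$ gives a lower bound on the injectivity radius and uniform bounds, on a neighbourhood of $K$, for the curvatures of $\nabla^L,\nabla^E,\nabla^{TX}$, their derivatives, and $\kappa_{x_0}$, so the constants in Notation \ref{noe2.7} may be chosen uniformly in $x_0\in K$.

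The step requiring genuine care is the canonical bundle case $L=K_X$. The positivity $\sqrt{-1}R^L>\var\Theta$ of \eqref{i} cannot be imposed, since $R^{K_X}=-R^{\det}$. Instead I would use the isomorphism $\Lambda^{q}(T^{*(0,1)}X)\otimes K_X\cong\Lambda^{n,q}(T^{*(1,0)}X)$ to recast the problem in terms of $(n,q)$-forms with values in $K_X^{\,p-1}\otimes E$. For $(n,q)$-forms the Bochner-Kodaira-Nakano identity is Nakano-positive in a form from which the curvature $R^{\det}$ has disappeared, so the spectral gap \eqref{ell4,1} now follows from the replaced hypotheses $\sqrt{-1}R^{\det}<-\var\Theta$ and $\sqrt{-1}R^E>-C\Theta\,\Id_E$ together with $h^L$ being the metric induced by $\Theta$. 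Once the gap is re-established under these hypotheses, the localization and rescaling of the previous paragraphs apply without change and the expansion follows.
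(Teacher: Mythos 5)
Your proposal is correct and follows essentially the same route as the paper, which itself only sketches the argument: the spectral gap of Theorem \ref{noncompact0} (together with essential self-adjointness from the Gaffney lemma) justifies the functional calculus and finite propagation speed, the problem is thereby localized, and the Bismut--Lebeau rescaling and resolvent analysis from the compact case carry over verbatim, with uniformity over the compact set $K$. Your treatment of the case $L=K_X$ via the identification $\Lambda^{q}(T^{*(0,1)}X)\otimes K_X^{p}\otimes E\cong\Lambda^{n,q}\otimes K_X^{p-1}\otimes E$ and the Bochner--Kodaira--Nakano identity for $(n,q)$-forms is also the argument used in the cited references to obtain the spectral gap under the modified hypotheses.
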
 
The idea of the proof is that the spectral gap property \eqref{ell4,1} of Theorem \ref{noncompact0}
allows to generalize the analysis leading to the expansion in the compact case (Theorems \ref{tue16} and \ref{tue17}) to the situation at hand.

\begin{remark}
Consider for the moment that in Theorem \ref{noncompact1} we have $\Theta=\frac{\imat}{2\pi}R^L$.
Since in the proof of Theorem \ref{noncompact1} we use the same localization technique as in the compact
case, the coefficients $J_{r,x}$ in the expansion of the Bergman kernel (cf.\ \eqref{toe2.9}), in 
particular the coefficients $\bb_r(x)=J_{2r,x}(x)$ of the diagonal expansion have the same universal 
formulas as in the compact case. Thus the explicit formulas 
from Theorem 
\ref{toet4.5} for $\bb_1$ and $\bb_2$ remain valid in the case of the situation considered in Theorem 
\ref{noncompact1}. Moreover, in the general case when $\frac{\imat}{2\pi}R^L>\var\Theta$ (for some constant $\var>0$), the first formulas in 
\eqref{bk2.95} and \eqref{abk2.51}, \eqref{abk2.7} are still valid.
\end{remark}
Let $\cC^\infty_{\rm const}(X, \End(E))$ denote the algebra of smooth sections 
of $X$ which are constant map outside a compact set.
For any $f\in\cC^\infty_{\rm const}(X, \End(E))$, we consider the Toeplitz 
operator $(T_{f,\,p})_{p\in\N}$ as in \eqref{toe2.4}.
The following result generalizes Theorems \ref{toet4.1} and \ref{toet4.2}
to complete manifolds.  
\begin{theorem}[{\cite[Th.\,5.3]{MM08b}}]\label{toet5.1}
Let $(X,\Theta)$ be a complete Hermitian manifold, let $(L,h^L)$ and $(E,h^E)$ be Hermitian holomorphic vector bundles on $X$ of rank one and $r$, respectively.
Assume that the hypotheses of Theorem \ref{noncompact0} are fulfilled.
Let $f,g\in\cC^\infty_{\rm const}(X, \End(E))$. Then the following assertions hold:
\\[2pt]
(i) The product of the two corresponding Toeplitz operators admits 
the asymptotic expansion \eqref{toe4.2} in the sense of \eqref{atoe2.1}, 
where $C_r$ are bi-differential operators, 
especially, $\supp (C_r(f,g))\subset \supp(f)$ $\cap \supp(g)$,
and $C_0(f,g)=fg$.
\\[2pt]
(ii) If $f,g\in\cC^\infty_{\rm const}(X)$, then \eqref{toe4.4} holds.
\\[2pt]
(iii) Relation \eqref{toe4.17} also holds for any 
$f\in\cC^\infty_{\rm const}(X, \End(E))$.
\\[2pt]
(iv) The coefficients $C_{r}(f,g)$ are given by $C_{r}(f,g)=C_{r,\,\om}(f,g)$, where $\om=\frac{\imat}{2\pi}R^{L}$ {\rm(}compare \eqref{bk2.951}{\rm)}. 
\end{theorem}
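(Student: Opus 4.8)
The plan is to reduce every assertion to a local computation near the diagonal, using the uniform spectral gap of Theorem \ref{noncompact0} and the off-diagonal Bergman kernel expansion of Theorem \ref{noncompact1} as substitutes for the compactness exploited in Sections \ref{s3.3}--\ref{s3.6}. First I would record the two preliminary facts that make the machinery run. Since $f$ is bounded, the operator $T_{f,\,p}=P_p\,f\,P_p$ is bounded with $\norm{T_{f,\,p}}\leqslant\norm{f}_\infty$, so all operators in sight are a priori bounded. Next, because $f,g$ are constant outside a compact set, their covariant derivatives have compact support; combining this with the uniform-on-compacta Bergman expansion of Theorem \ref{noncompact1} and the far off-diagonal decay coming from finite propagation speed, I would establish the complete-manifold analogues of Lemma \ref{toet2.1} (the Toeplitz kernel is $\mO(p^{-\infty})$ off the diagonal) and of Theorem \ref{toet2.3} (the near-diagonal expansion \eqref{toe2.13}), uniformly for $x_0$ in any compact set and with the same polynomials $Q_{r,\,x_0}(f)$ given by the universal formula \eqref{toe2.14}.

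With these local expansions in hand, part (i) would follow from a complete-manifold version of the characterization Theorem \ref{toet3.1}. The necessity of conditions (i)--(iii) there is formal; the sufficiency rests on the $L^2$ operator estimates used in the proof of Theorem \ref{toet3.1}, which I would re-derive using the spectral gap \eqref{ell4,1} in place of compactness, exactly as in \cite{MM08b}. Writing the product kernel locally as in \eqref{toe4.5} and applying the $\C^n$ kernel calculus of Lemma \ref{toet1.1} yields the expansion \eqref{toe4.6}--\eqref{toe4.7}, so $T_{f,\,p}\,T_{g,\,p}$ is Toeplitz and \eqref{toe4.2b} holds with bi-differential $C_r$ and $C_0(f,g)=fg$. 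The support statement is then a consequence of locality: each $C_r$ is a bi-differential operator, so $C_r(f,g)$ vanishes on any open set where $f$ or $g$ is locally constant, whence $\supp C_r(f,g)\subset\supp(f)\cap\supp(g)$.

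Parts (ii) and (iv) I would deduce from the universality of the coefficients. Because the whole construction is the same localization-and-rescaling procedure as in the compact case, the polynomials $J_{r,\,x_0}$, the kernel products $\cK[\cdot,\cdot]$, and hence the $C_r(f,g)$ are given by identical pointwise formulas; thus the commutation relation \eqref{toe4.4} follows from $C_1(f,g)-C_1(g,f)=\imat\{f,g\}\Id_E$ as in the compact case, and $C_r(f,g)=C_{r,\,\om}(f,g)$ for $\om=\tfrac{\imat}{2\pi}R^L$ is the non-compact counterpart of \eqref{bk2.951}. For the norm (iii), the estimate $\norm{T_{f,\,p}}\leqslant\norm{f}_\infty$ already gives the $\limsup$ inequality; for the reverse inequality I would reproduce the argument of Theorem \ref{toet4.2}, choosing $x_0$ and $u_0$ that nearly attain $\norm{f}_\infty$ and using the peak sections $S^p_{x_0}=p^{-n/2}P_p(e_L^{\otimes p}\otimes u_0)$, whose key estimate \eqref{toe4.18} is a direct consequence of the diagonal and near-diagonal Bergman expansions supplied by Theorem \ref{noncompact1}.

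The main obstacle is the passage from a compact to a complete base, namely re-establishing the operator-norm estimates underlying both the characterization theorem and the kernel localization, since one can no longer invoke uniform bounds over all of $X$ nor the automatic boundedness and finite-rank properties available in the compact setting. The resolution in each case is the uniform spectral gap $\spec(\square_p)\subset\{0\}\cup[\,pC_1,\infty[$ of Theorem \ref{noncompact0}, together with the hypothesis that $f,g$ are constant off a compact set, which confines all non-trivial behavior to a fixed compact region and lets the complete-manifold Bergman expansion of Theorem \ref{noncompact1} carry the remaining analysis through essentially verbatim.
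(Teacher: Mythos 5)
Your proposal follows essentially the same route as the paper (which itself defers the proof to \cite[Th.\,5.3]{MM08b}): use the spectral gap of Theorem \ref{noncompact0} and the hypothesis that $f,g$ are constant off a compact set to localize, carry the off-diagonal kernel expansions and the characterization Theorem \ref{toet3.1} over to the complete setting, and then read off (i)--(iv) from the universality of the local formulas. That is the intended argument.

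One justification as written is incorrect, although the conclusion is fine. For the support statement you argue that $C_r(f,g)$ vanishes on any open set where $f$ or $g$ is \emph{locally constant}; that premise would not yield $\supp C_r(f,g)\subset\supp(f)\cap\supp(g)$ (indeed $C_0(f,g)=fg$ does not vanish where $f$ is a nonzero constant). The correct, and equally immediate, argument is locality with respect to vanishing: a bi-differential operator satisfies $C_r(f,g)(x)=0$ whenever $f$ (or $g$) vanishes identically near $x$, since every term involves some jet of $f$ and some jet of $g$ at $x$; hence $\supp C_r(f,g)\subset\supp(f)$ and $\subset\supp(g)$. Also note that in the complete case one cannot invoke $P_p=P_{p,\om}$ as operators to get (iv), since $H^0_{(2)}$ depends on the metrics; your appeal to the universality of the local rescaling computation is the right substitute, but it is the point that actually needs to be checked rather than a formality.
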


\section{Berezin-Toeplitz quantization on K{\"a}hler orbifolds} \label{pbs4}
In this Section we review the theory of Berezin-Toeplitz quantization on K\"ahler orbifolds, especially 
we show that set of Toeplitz operators forms an algebra.
Note that the problem of quantization of orbifolds appears naturally in the study of the phenomenon of 
``quantization commutes to reduction'', since the reduced spaces are often orbifolds, see e.g.\ \cite{Ma10},
or in the problem of quantization of moduli spaces.
 
Complete explanations and references for Sections \ref{pbs4.1} and \ref{pbs4.2} are contained in \cite[\S 5.4]{MM07}, \cite[\S6]{MM08b}. Moreover, we treat there also the case of symplectic orbifolds.

This Section is organized as follows. 
In Section \ref{pbs4.1} we recall the basic definitions about orbifolds.
In Section \ref{pbs4.2} we explain the asymptotic expansion
of Bergman kernel on complex orbifolds \cite[\S 5.2]{DLM04a}, which we apply in
Section \ref{pbs4.3} to derive the Berezin-Toeplitz quantization on K{\"a}hler orbifolds. 

\subsection{Preliminaries about orbifolds}\label{pbs4.1}
We begin by the definition of orbifolds.
We define at first a  category $\mathcal{M}_s$ as follows~: 
The objects of $\mathcal{M}_s$
are the class of pairs $(G,M)$ where $M$ is a connected smooth manifold
and $G$ is a finite group acting effectively on $M$
(i.e., if $g\in G$ such that $gx=x$ for any $x\in M$, 
then $g$ is the unit element of $G$). 
Consider two objects $(G,M)$ and $(G',M')$. For $g\in G', \varphi \in \Phi $, we define
$g\varphi : M \rightarrow M'$
by $(g\varphi)(x) = g(\varphi(x))$ for $x\in M$.
A morphism $\Phi: (G,M)\rightarrow
(G',M')$ is a family of open embeddings $\varphi: M\rightarrow M'$
satisfying~:
\\[2pt]
{i)} For  each $\varphi \in \Phi $, there is an injective group
 homomorphism
$\lambda_{\varphi}~: G\rightarrow G' $ that makes $\varphi$ be
$\lambda_{\varphi}$-equivariant.
\\[2pt]
{ii)} 
If $(g\varphi)(M) \cap \varphi(M) \neq \emptyset$, then $g\in  \lambda_{\varphi}(G)$.
\\[2pt]
{ iii)} For $\varphi \in \Phi $, we have $\Phi = \{g\varphi, g\in G'\}$.

\begin{definition}[Orbifold chart, atlas, structure]\label{pbt4.1} Let $X$ be a paracompact Hausdorff space.
An $m$-dimensional \emph{orbifold chart} on $X$ consists of a
connected open set $U$ of $X$,
an object $(G_U,\widetilde{U})$ of $\mathcal{M}_s$ with $\dim\widetilde{U}=m$,
and a ramified covering $\tau_U:\widetilde{U}\to U$ which is 
$G_U$-invariant and induces a homeomorphism
$U \simeq \widetilde{U}/G_U$. We denote the chart by 
$(G_U,\widetilde{U})\stackrel{\tau_U}{\longrightarrow}U$. 

An $m$-dimensional \emph{orbifold atlas} $\mathcal{V}$ on $X$
consists of a family of $m$-dimensional orbifold charts 
$\mathcal{V} (U)
=((G_U,\widetilde{U})\stackrel{\tau_U}{\longrightarrow} U)$ 
satisfying the following conditions\,:
\\[2pt]
{(i)} The open sets $U\subset X$ form a covering $\mathcal{U}$ with
 the property:
\begin{equation}\label{pb4.1}
\begin{split}
&\text{For any $U, U'\in \mathcal{U}$ and $x\in U\cap U'$, there exists  
$U''\in \mathcal{U}$}\\
&\text{ such that $x\in U''\subset U\cap U'$}.
\end{split}
\end{equation}
{(ii)} For any $U, V\in \mathcal{U}, U\subset V$ there exists a morphism
$\varphi_{VU}:(G_U,\widetilde{U})\rightarrow (G_V,\widetilde{V})$, 
which covers the inclusion $U\subset V$ and satisfies 
$\varphi_{WU}=\varphi_{WV} \circ \varphi_{VU}$
for any $U,V,W\in \mathcal{U}$, with $U\subset V \subset W$. 

It is easy to see that there exists a unique maximal orbifold atlas 
$\mathcal{V}_{\rm max}$ containing $\mathcal{V}$;
$\mathcal{V}_{\rm max}$ consists of all orbifold charts 
$(G_U,\widetilde{U})\stackrel{\tau_U}{\longrightarrow} U$,
which are locally isomorphic to charts from $\mathcal{V}$ in the neighborhood 
of each point of $U$. A maximal orbifold atlas $\mathcal{V}_{\rm max}$ 
is called an \emph{orbifold structure} and the pair $(X,\mathcal{V}_{\rm max})$ 
is called an orbifold. As usual, once we have an orbifold atlas 
$\mathcal{V}$ on $X$ we denote the orbifold by $(X,\mathcal{V})$, 
since $\mathcal{V}$ uniquely determines $\mathcal{V}_{\rm max}$\,.
\end{definition}
In Definition \ref{pbt4.1} we can replace $\mathcal{M}_s$ by a category 
of manifolds with an additional structure such as orientation, 
Riemannian metric, almost-complex structure or complex structure.
We impose that the morphisms (and the groups) preserve the specified
structure. So we can define oriented, Riemannian,
almost-complex or complex orbifolds.
\begin{definition}[regular and singular set]\label{pbt4.2}
Let $(X,\mathcal{V})$ be an orbifold. For each $x\in X$, we can choose a
small neighborhood $(G_x, \widetilde{U}_x)\to U_x$ such
that $x\in \widetilde{U}_x$ is a fixed point of $G_x$
(it follows from the definition that such a $G_x$ is unique up to 
isomorphisms for each $x\in X$). 
We denote by  $|G_x|$ the cardinal of $G_x$.
If $|G_x|=1$, then $X$ has a smooth manifold structure in the neighborhood 
of $x$, which is called a smooth point of $X$. 
If  $|G_x|>1$, then $X$ is not a smooth manifold in the neighborhood of $x$, 
which is called a singular point of $X$. We denote by
$X_{\rm sing}= \{x\in X; |G_x|>1\}$ the singular set of $X$,
and $X_{\rm reg}= \{x\in X; |G_x|=1\}$ the regular set of $X$.
\end{definition}
It is useful to note that on an orbifold $(X,\mathcal{V})$ we can 
construct partitions of unity. First, let us call a function on $X$ smooth, 
if its lift to any chart of the orbifold atlas $\mathcal{V}$ is smooth 
in the usual sense. Then the definition and construction of a smooth 
partition of unity associated to a locally finite covering carries over 
easily from the manifold case. The point is to construct 
smooth $G_U$-invariant functions with compact support on
$(G_U,\widetilde{U})$.
\begin{definition}[Orbifold Riemannian metric]\label{pbt4.3}
Let $(X,\mathcal{V})$ be an arbitrary orbifold. 
A \emph{Riemannian metric} on $X$ is a Riemannian metric $g^{TX}$
on $X_{\rm reg}$ such that the lift of $g^{TX}$ to any chart of the orbifold 
atlas $\mathcal{V}$ can be extended to a smooth Riemannian metric. 
\end{definition}
Certainly, for any $(G_U, \wi{U})\in \mathcal{V}$, we can always 
construct a $G_U$-invariant Riemannian metric on $\wi{U}$. 
By a partition of unity argument, we see that there exist Riemannian metrics 
on the orbifold $(X,\mathcal{V})$.
\begin{definition}\label{pbt4.4}
 An \emph{orbifold vector bundle} $E$ over an
orbifold $(X,\mathcal{V})$ is defined as follows\,: $E$ is an orbifold
and for any $U\in \mathcal{U}$, $(G_U^{E}, \widetilde{p}_U:
\widetilde{E}_U \rightarrow \widetilde{U})$ is
 a $G_U^{E}$-equivariant vector bundle and $(G_U^{E}, \widetilde{E}_U)$
(resp. $(G_U=G_U^{E}/K_U^{E}, \widetilde{U})$, where
$K_U^{E}= \ker (G_U^{E}\rightarrow\mbox{{\rm Diffeo}} (\widetilde{U})))$
is the orbifold structure of $E$ (resp.\ $X$). 
If $G_U^E$ acts effectively on $\widetilde{U}$ for $U\in \mathcal{U}$, 
i.e.\ $K_U^{E} = \{ 1\}$, we call $E$ a proper orbifold vector bundle.
\end{definition}

Note that any structure on $X$ or $E$ is locally
$G_x$ or $G_{U_x}^{E}$-equivariant.

Let $E$ be an orbifold vector bundle on $(X,\mathcal{V})$. 
For $U\in \mathcal{U}$,
let $\wi{E ^{\pr}_U}$ be the maximal $K_U^{E}$-invariant sub-bundle of
 $\wi{E}_U$ on $\wi{U}$. Then $(G_U, \wi{E ^{\pr}_U})$  defines a 
proper orbifold vector bundle on $(X, \mathcal{V})$, denoted by $E ^{\pr}$.

The  (proper)  orbifold tangent bundle $TX$ on an orbifold $X$ is defined by 
$(G_U, T\widetilde{U} \rightarrow \widetilde{U})$, for $U\in \mathcal{U}$.
In the same vein we introduce the cotangent bundle $T^*X$.
We can form tensor products of bundles by taking the tensor
products of their local expressions in the charts of an orbifold atlas.

Let $E \rightarrow X$ be an orbifold vector bundle and 
$k\in \N\cup\{\infty\}$. A section $s: X\rightarrow E$ is called ${\cC}^k$
 if for each $U\in \mathcal{U}$, $s|_{U}$ is covered by 
a $G_U^{E}$-invariant ${\cC}^k$
section $\widetilde{s}_U : \widetilde{U} \rightarrow \widetilde{E}_U$.
 We denote by $\cC^k(X,E)$  
the space of $\cC^k$ sections of $E$ on $X$. 
\\[2pt]
\textbf{\emph{Integration on orbifolds.\/}}
If $X$ is oriented, we define the integral
 $\int_{X} \alpha$ for a form $\alpha$
 over $X$ (i.e. a section of $ \Lambda  (T^*X)$ over $X$) as follows. 
If $\supp(\alpha) \subset U\in \mathcal U$ set
\begin{equation}\label{pb4.5}
\int_{X} \alpha: = \frac{1}{|G_U|} \int_{\widetilde{U}}
\widetilde{\alpha}_U.
\end{equation}
It is easy to see that the definition is independent of the chart. For general $\alpha$ we extend the definition by using a partition of unity.

If $X$ is an oriented Riemannian orbifold,
there exists a canonical volume element $dv_X$ on $X$, which is a section of 
$\Lambda^m(T^*X)$, $m=\dim X$. Hence, we can also integrate functions on $X$.
\\[2pt]
\textbf{\emph{Metric structure on orbifolds.\/}}
Assume now that the Riemannian orbifold $(X,\mathcal V)$ is compact. We define a metric on $X$ by setting for $x,y\in X$,
\begin{eqnarray}\begin{array}{l}
d(x,y) = \mbox{Inf}_\gamma  \Big \{ \sum_i \int_{t_{i-1}}^{t_i}
|\frac{\partial }{\partial t}\widetilde{\gamma}_i(t)| dt \Big |
 \gamma: [0,1] \to X, \gamma(0) =x, \gamma(1) = y,\\
 \hspace*{15mm}  \mbox{such that there exist }   t_0=0< t_1 < \cdots < t_k=1,
\gamma([t_{i-1}, t_i])\subset U_i,  \\
\hspace*{15mm}U_i \in \mathcal{U}, \mbox{ and a } \cC^{\infty}
\mbox{ map  } \widetilde{\gamma}_i: [t_{i-1}, t_i] \to \widetilde{U}_i
 \mbox{ that covers } \gamma|_{[t_{i-1}, t_i]}   \Big \}.
\end{array}\nonumber
\end{eqnarray}
Then $(X, d)$ is a metric space.
For $x\in X$, set $d(x,X_{\rm sing}):=\inf_{y\in X_{\rm sing}} d(x,y)$.
\\[2pt]
\textbf{\emph{Kernels on orbifolds.\/}}
Let us discuss briefly kernels and operators on orbifolds.
For any open set $U\subset X$ and orbifold chart
$(G_U,\widetilde{U})\stackrel{\tau_U}{\longrightarrow} U$, 
we will add a superscript $\, \wi{}\,$ to indicate the corresponding
objects on $\widetilde{U}$.
Assume that  
$\wi{\mK}(\wi{x},\wi{x}^{\,\prime})\in \cC^\infty(\wi{U} \times \wi{U},
\pi_1^* \wi{E}\otimes \pi_2^* \wi{E}^*)$
verifies 
\begin{align}\label{pb4.3}
(g,1)\wi{\mK}(g^{-1}\wi{x},\wi{x}^{\,\prime})
=(1,g^{-1})\wi{\mK}(\wi{x},g\wi{x}^{\,\prime})
\quad \text{ for any $g\in G_{U}$,}
\end{align}
where $(g_1,g_2)$ acts on $\wi{E}_{\wi{x}}\times \wi{E}_{\wi{x}^{\,\prime}}^*$ 
by $(g_1,g_2)(\xi_1,\xi_2)= (g_1\xi_1,g_2 \xi_2)$. 

We define the operator $\wi{\mK}: \cC^\infty_0(\wi{U}, \wi{E})
\to \cC^\infty(\wi{U}, \wi{E})$ by
\begin{equation}\label{pb4.4}
(\wi{\mK}\, \wi{s}) (\wi{x})= \int_{\wi{U}} \wi{\mK}(\wi{x},\wi{x}^{\,\prime}) 
 \wi{s}(\wi{x}^{\,\prime}) dv_{\wi{U}} (\wi{x}^{\,\prime}) 
\quad\text{for $\wi{s} \in \cC^\infty_0(\wi{U}, \wi{E})$\,.}
\end{equation}
For $\wi{s} \in \cC^\infty(\wi{U}, \wi{E})$ and $g\in G_U$,
$g$ acts on $\cC^\infty(\wi{U}, \wi{E})$ by: 
$(g \cdot \wi{s})(\wi{x}):=g \cdot \wi{s}(g^{-1}\wi{x})$. We can then identify 
an element $s \in \cC^\infty({U}, {E})$ 
with an element $\wi{s} \in \cC^\infty(\wi{U}, \wi{E})$ 
verifying $g\cdot \wi{s}=\wi{s}$ for any $g\in G_U$.

With this identification, we define the operator 
$\mK: \cC^\infty_0(U,{E})\to \cC^\infty({U},{E})$ by 
\begin{equation}\label{pb4.6}
({\mK} s)(x)=\frac{1}{|G_U|}\int_{\wi{U}} \wi{\mK}(\wi{x},\wi{x}^{\,\prime}) 
 \wi{s}(\wi{x}^{\,\prime}) dv_{\wi{U}} (\wi{x}^{\,\prime}) 
\quad\text{for $s \in \cC^\infty_0({U},{E})$\,,}
\end{equation}
where $\wi{x}\in\tau^{-1}_U(x)$.
Then the smooth kernel $\mK(x,x^\prime)$ of the operator $\mK$ 
with respect to $dv_X$ is
\begin{align}\label{pb4.7}
\mK(x,x^\prime)= \sum_{g\in G_U} (g,1)\wi{\mK}(g^{-1}\wi{x},\wi{x}^{\,\prime}).
\end{align}

Let $\mK_1,\mK_2$ be two operators as above and assume that the kernel of 
one of $\wi{\mK}_1, \wi{\mK}_2$ has compact support.
By \eqref{pb4.5}, \eqref{pb4.3} and \eqref{pb4.6}, 
the kernel of $\mK_1\circ \mK_2$ is given by
\begin{equation}\label{pb4.9}
(\mK_1\circ \mK_2)(x,x^\prime)= \sum_{g\in G_U} 
(g,1)(\wi{\mK}_1\circ \wi{\mK}_2)(g^{-1}\wi{x},\wi{x}^{\,\prime}).
\end{equation}

\subsection{Bergman kernel on K{\"a}hler orbifolds} \label{pbs4.2}
In this section we study the asymptotics of the Bergman kernel on orbifolds.
\\[2pt]
\textbf{\emph{Dolbeault cohomology of orbifolds.\/}}
Let $X$ be a compact complex orbifold of complex dimension $n$
with complex structure $J$.
Let $E$ be a holomorphic orbifold vector bundle on $X$.

Let $\cO_X$ be the sheaf over $X$ of local $G_U$-invariant holomorphic
functions over $\widetilde{U}$, for $U\in {\mathcal U}$.
The local $G^{E}_U$ -invariant holomorphic sections of
$\widetilde{E} \rightarrow \widetilde{U}$ define 
a sheaf $\cO_X(E)$ over $X$.
Let $H^\bullet(X, \cO_X(E))$ be the cohomology of the sheaf
$\cO_X(E)$ over $X$. Notice that by Definition, we have
$\cO_X(E)=\cO_X(E ^{\pr})$.
Thus without lost generality, we may and will assume that 
$E$ is a proper orbifold vector bundle on $X$.

Consider a section $s\in\cC^\infty(X,E)$ and a local section 
$\wi{s}\in\cC^\infty(\wi{U},\wi{E}_U)$ covering $s$.
Then $\db^{\wi{E}_U}\wi{s}$ covers a section of $T^{*(0,1)}X\otimes E$ 
over $U$, denoted $\db^Es|_U$.
The family of sections $\{\db^Es|_U\,:\,U\in\mathcal{U}\}$ patch together 
to define a global section $\db^Es$ of $T^{*(0,1)}X\otimes E$ over $X$. 
In a similar manner we define $\overline\partial^E\alpha$ for a 
$\cC^{\infty}$ section $\alpha$ of $\Lambda (T^{*(0,1)}X) \otimes E$ over $X$. 
We obtain thus the Dolbeault complex 
($\Omega^{0,\bullet}(X,E), \overline{\partial}^E$)\,:
\begin{equation}\label{pb4.12}
0 \longrightarrow \Omega^{0,0}(X,E)
\stackrel{\overline{\partial}^E}{\longrightarrow} \cdots
\stackrel{\overline{\partial}^E}{\longrightarrow} \Omega^{0,n}(X,E)
\longrightarrow 0 .
\end{equation}
{}From the abstract de Rham theorem there exists a canonical isomorphism
\begin{eqnarray}\label{pb4.13}
H^\bullet(\Omega^{0,\bullet}(X,E), \overline{\partial}^E) \simeq
H^\bullet(X,\cO_X(E)).
\end{eqnarray}
In the sequel, we also denote  $H^\bullet(X,\cO_X(E))$ 
by $H^\bullet(X,E)$. 
\\[2pt]
\textbf{\emph{Prequantum line bundles.\/}}
We consider a complex orbifold $(X,J)$ endowed with the complex structure $J$. 
Let $g^{TX}$ be a Riemannian metric on $TX$ compatible with $J$.
There is then an associated $(1,1)$-form $\Theta$ given by 
$\Theta(U,V)=g^{TX}(JU,V)$. The metric $g^{TX}$ is called a K{\"a}hler metric 
and the orbifold $(X,J)$ is called a \emph{K{\"a}hler orbifold}
if $\Theta$ is a closed form, that is, $d\Theta=0$.
In this case $\Theta$ is a symplectic form, called K{\"a}hler form. We will 
denote the K{\"a}hler orbifold by $(X,J,\Theta)$ or shortly by $(X,\Theta)$.

Let $(L,h^L)$ be a holomorphic Hermitian proper orbifold line bundle 
on an orbifold $X$, and let $(E,h^E)$ be a holomorphic Hermitian proper 
orbifold vector bundle on $X$.

We assume that the associated curvature $R^L$ of $(L,h^L)$ verifies 
\eqref{prequantum}, i.e., $(L,h^L)$ is a positive proper orbifold line bundle 
on $X$. This implies that $\om:=\frac{\sqrt{-1}}{2\pi}R^L$ is 
a K{\"a}hler form on $X$, $(X,\om)$ is a K{\"a}hler orbifold and 
$(L,h^L,\nabla^L)$ is a prequantum line bundle on $(X,\om)$.

Note that the existence of a positive line bundle $L$ on 
a compact complex orbifold $X$ implies that the Kodaira map
associated to high powers of $L$ gives a holomorphic embedding of $X$ 
in the projective space. This is the generalization due to Baily of 
the Kodaira embedding theorem (see e.g.\ \cite[Theorem\,5.4.20]{MM07}).
\\[2pt]
\textbf{\emph{Hodge theory.\/}}
Let $g^{TX}=\om(\cdot,J\cdot)$ be the Riemannian metric on $X$
induced by $\om=\frac{\sqrt{-1}}{2\pi}R^L$.
Using the Hermitian product along the fibers of $L^p$, $E$, 
$\Lambda(T^{*(0,1)}X)$, the Riemannian volume form $dv_X$ and 
the definition \eqref{pb4.5} of the integral on an orbifold, we introduce 
an $L^2$-Hermitian product on $\Omega^{0,\bullet}(X,L^p\otimes E)$ 
similar to \eqref{lm2.0}. This allows to define the formal adjoint 
$\overline{\partial}^{L^p\otimes E,*}$ of $\overline{\partial}^{L^p\otimes E}$ 
and the operators $D_p$ and $\square_p$ as in \eqref{lm2.1}\,.
Then $D_p^2$ preserves the $\Z$-grading of 
$\Omega^{0,\bullet} (X,L^p\otimes E)$.
We note that Hodge theory extends to compact orbifolds and delivers a canonical isomorphism
\begin{equation}\label{pb4.16}
H^{q}(X,L^p\otimes E)\simeq \Ker(D_p^2|_{\Omega^{0,q}}).
\end{equation}
\\[2pt]
\textbf{\emph{Spectral gap.\/}}
By the same proof as in \cite[Theorems\,1.1,\,2.5]{MM02}, 
\cite[Theorem 1]{BVa89},
we get vanishing results and the spectral gap property.
\begin{theorem}\label{pbt4.11} 
Let $(X,\om)$ be a compact K{\"a}hler orbifold, 
$(L,h^L)$ be a prequantum holomorphic Hermitian proper orbifold line bundle 
on $(X,\om)$ and $(E,h^E)$ be an arbitrary holomorphic Hermitian 
proper orbifold vector bundle on $X$.

\noindent
Then there exists $C>0$ such that
for any $p\in \N$ 
\begin{equation}\label{pb4.17}
\spec(D_{p}^2)\subset \{0\}\, \cup\, ]4\pi p-C,+\infty[,
\end{equation}
 and $D_{p}^2|_{\Omega^{0, >0}}$ is invertible for $p$ large enough.
Consequently, we have the Kodaira-Serre vanishing theorem, namely, 
for $p$ large enough,
\begin{equation}\label{pb4.18}
H^{q}(X,L^p\otimes E)= 0\,, \quad \text{\rm for every $q>0$.}
\end{equation}
\end{theorem}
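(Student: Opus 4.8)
The plan is to transfer the proof of the compact-manifold spectral gap (Theorem \ref{bkt1.1}, due to Bismut--Vasserot \cite{BVa89} and \cite{MM02}) to the orbifold setting by working $G_U$-equivariantly on the local uniformizing charts. Fix an orbifold chart $(G_U,\widetilde U)\stackrel{\tau_U}{\longrightarrow}U$. Over $\widetilde U$ the bundles $L$ and $E$ lift to $G_U^E$-equivariant holomorphic Hermitian bundles $\widetilde L$, $\widetilde E$; the Kähler form $\om$ lifts to a genuine Kähler form on the smooth manifold $\widetilde U$, and a section $s\in\Omega^{0,\bullet}(X,L^p\otimes E)$ corresponds under \eqref{pb4.3} to a $G_U$-invariant section $\widetilde s$ on $\widetilde U$. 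Since $\widetilde U$ is an honest Kähler manifold, all the pointwise Hodge-theoretic identities are available there; the key observation is that they are $G_U$-equivariant and hence descend to $X$.

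The heart of the matter is a curvature estimate. On $\widetilde U$ the Bochner--Kodaira--Nakano formula for $D_p^2=2\square_p$ acting on $\Omega^{0,q}(\widetilde U,\widetilde L^p\otimes\widetilde E)$ (with vanishing torsion, since the metric is Kähler) expresses it as the sum of a nonnegative second-order operator and a curvature endomorphism which splits into a part linear in $p$ coming from $R^L$ and a $p$-independent part coming from $R^E$, the Ricci and scalar curvatures. As $(L,h^L)$ is prequantum, \eqref{prequantum} normalises $\sqrt{-1}R^L=2\pi\om$, and a direct computation (identical to the model computation of Section \ref{toes1}, where $\square^L|_{\Omega^{0,q}}=\tfrac12\cL+2\pi q$) shows that on $(0,q)$-forms the $R^L$-term is bounded below by $4\pi p\,q\geqslant 4\pi p$ for $q\geqslant 1$, while the $p$-independent terms are bounded below by some $-C$ uniformly in $p$. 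Integrating this pointwise inequality over $X$ by means of the orbifold integral \eqref{pb4.5}, \eqref{pb4.6}, and using that $\widetilde s$ is $G_U$-invariant, we obtain
\begin{equation*}
\norm{D_p s}^2_{L^2}\geqslant (4\pi p-C)\norm{s}^2_{L^2}\,,\qquad s\in\Omega^{0,>0}(X,L^p\otimes E)\,,
\end{equation*}
exactly as in the manifold case. This yields \eqref{pb4.17}, and since $0\notin\spec(D_p^2|_{\Omega^{0,>0}})$ once $4\pi p>C$, the operator $\square_p=\tfrac12 D_p^2$ is invertible on $\Omega^{0,>0}$ for $p$ large. The vanishing \eqref{pb4.18} then follows from the Hodge isomorphism \eqref{pb4.16}: for $q>0$ and $4\pi p>C$ we have $H^q(X,L^p\otimes E)\simeq\Ker(D_p^2|_{\Omega^{0,q}})=0$.

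The main obstacle — and the only point where the orbifold structure genuinely intervenes — is the passage from the equivariant estimate on $\widetilde U$ to the global $L^2$ estimate on $X$. One must verify that the formal adjoint $\db^{L^p\otimes E,*}$ taken with respect to the orbifold $L^2$-product induced by \eqref{lm2.0} and \eqref{pb4.5} coincides with the $G_U$-invariant fibrewise adjoint on each chart $\widetilde U$, so that the integration by parts underlying the Bochner--Kodaira--Nakano formula produces no contribution from the singular set $X_{\mathrm{sing}}$. This is guaranteed by restricting to $G_U$-invariant sections together with the weight $1/|G_U|$ in \eqref{pb4.5}: every computation can be performed on the smooth cover $\widetilde U$, where ordinary Hodge theory applies, and $G_U$-invariance kills any putative boundary term along $X_{\mathrm{sing}}$. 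Once this compatibility is checked, the analytic argument of \cite{BVa89,MM02} carries over verbatim, and Hodge theory for compact orbifolds (which holds because $\square_p$ is a $G_U$-invariant elliptic operator on each chart) supplies \eqref{pb4.16} and hence the stated vanishing.
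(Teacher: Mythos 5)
Your proposal is correct and follows essentially the route the paper intends: the paper's proof of Theorem \ref{pbt4.11} is precisely the observation that the Bochner--Kodaira--Nakano/Lichnerowicz argument of \cite{BVa89,MM02} transfers verbatim to the orbifold setting by working with $G_U$-invariant lifts on the uniformizing charts, which is what you carry out, including the correct constant $4\pi pq$ on $(0,q)$-forms coming from \eqref{prequantum} and the passage to \eqref{pb4.18} via the orbifold Hodge isomorphism \eqref{pb4.16}. Your discussion of the compatibility of the formal adjoint with the weighted orbifold integral \eqref{pb4.5} is exactly the point that makes the transfer legitimate, so there is no gap.
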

\noindent
\textbf{\emph{Bergman kernel.\/}}
As in \S \ref{s2.1}, we define the Bergman kernel as
the smooth kernel with respect to the Riemannian volume form $dv_X(x')$ of 
the orthogonal projection (Bergman projection) $P_p$ from 
$\cC^\infty(X, L^p\otimes E)$ onto $H^0(X,L^p\otimes E)$.

Let $d_p = \dim H^0(X,L^p\otimes E)$ and consider an arbitrary orthonormal 
basis $\{S^p_i\}_{i=1}^{d_p}$ of $H^0(X,L^p\otimes E)$ with respect to 
the Hermitian product \eqref{lm2.0} and \eqref{pb4.5}. 
In fact, in the local coordinate above, $\widetilde{S}^p_i(\widetilde{z})$ 
are $G_x$-invariant on $\widetilde{U}_x$, and
\begin{align}\label{pb4.19}
P_p (y,y') &= \sum_{i=1}^{d_p}  \widetilde{S}^p_i ( \widetilde{y})
 \otimes (\widetilde{S}^p_i(\widetilde{y}^\prime))^*,
\end{align}
where we use $\widetilde{y}$ to denote the point in $\widetilde{U}_x$
representing $y\in {U}_x$. 
\\[2pt]
\textbf{\emph{Asymptotics of the Bergman kernel.\/}}
The Bergman kernel on orbifolds has an asymptotic expansion, which we now describe.
We follow the same pattern as in the smooth case.
The spectral gap property \eqref{pb4.17} shows that we have the
analogue of Theorem \ref{tue16}, with the same $F$ as given in \eqref{0c3}:
\begin{equation}\label{pb4.21}
|P_p(x,x') -  F(D_p)(x,x')|_{\cC^m(X\times X)}
\leqslant C_{l,m,\var} p^{-l}.
\end{equation}

As pointed out in \cite{Ma05}, the property of the finite propagation speed
of solutions of hyperbolic equations still holds on an orbifold 
(see the proof in \cite[Appendix D.2]{MM07}).
Thus $F(D_p)(x,x')= 0$ for every for $x,x'\in X$ satisfying 
$d(x, x')\geqslant \var$. 
Likewise, given $x\in X$, $F(D_p)(x,\cdot)$ only depends on the restriction 
of $D_p$ to $B^X(x,\var)$. Thus the problem of the asymptotic expansion 
of $P_p(x,\cdot)$ is local.

For any compact set $K\subset X_{\rm reg}$, the Bergman kernel $P_p(x,x')$
has an asymptotic expansion as in Theorem \ref{tue17} by the same argument as in Theorem \ref{tue16}.

Let now $x\in X_{\rm sing}$ and let $(G_U,\widetilde{U})\stackrel{\tau_U}{\longrightarrow} U$
be an orbifold chart near $x$.
We recall that for every open set $U\subset X$ and orbifold chart
$(G_U,\widetilde{U})\stackrel{\tau_U}{\longrightarrow} U$,
we add a superscript $\, \wi{}\,$ to indicate the corresponding
objects on $\widetilde{U}$. 
Let $\partial U=\ov{U}\setminus U$, $U_1=\{ x\in U, d(x,\partial U)\geqslant\var\}$. 
Then $F(\wi{D}_p)(\wi{x},\wi{x}^{\,\prime})$  is well defined for 
$\wi{x},\wi{x}^{\,\prime}\in \wi{U}_1=\tau_U^{-1}(U_1)$. Since 
$g\cdot F(\wi{D}_p)=F(\wi{D}_p) g$, we get
\begin{equation}\label{pb4.22a}
(g, 1) F(\wi{D}_p)( g^{-1} \wi{x},\wi{x}^\prime)
= (1, g^{-1}) F(\wi{D}_p)(\wi{x}, g\wi{x}^\prime)\,,
\end{equation}
for every $g\in G_U$, 
$\wi{x},\wi{x}^{\,\prime}\in \wi{U}_1$. 
Formula \eqref{pb4.7} shows that for every $x,x^\prime\in U_1$ 
and $\wi{x},\wi{x}^{\,\prime}\in \wi{U}_1$ representing $x,x^\prime$, we have
\begin{equation}\label{pb4.22}
F(D_p)(x,x^\prime) = \sum_{g\in G_{U}}
(g, 1) F(\wi{D}_p)( g^{-1}\wi{x},\wi{x}^{\,\prime}).
\end{equation}

%
%
In view of \eqref{pb4.22}, the strategy is to use the expansion for $F(\wi{D}_p)(\cdot,\cdot)$ in order to deduce
the expansion for $F(D_p)(\cdot,\cdot)$ and then for $P_p(\cdot,\cdot)$, due to \eqref{pb4.21}.
In the present situation the kernel $\cP$ 
takes the form
\begin{equation}\label{pb4.25}
\cP(\wi{Z},\wi{Z}^\prime) =\exp\Big(-\frac{\pi}{2}\sum_i\big(|\wi{z}_i|^2
+|\wi{z}^{\prime}_i|^2 -2\wi{z}_i\overline{\wi{z}}_i^\prime\big)\Big)\,.
\end{equation}

For details we refer to \cite[\S\,5.4.3]{MM07}.

\subsection{Berezin-Toeplitz quantization on K{\"a}hler orbifolds} \label{pbs4.3}

We apply now the results of Section \ref{pbs4.2} to establish 
the Berezin-Toeplitz quantization on K{\"a}hler orbifolds.
We use the notations and assumptions of that Section.
\\[2pt]
\textbf{\emph{Toeplitz operators on orbifolds.\/}}
We define Toeplitz operators as a family $\{T_p\}$ of linear operators $T_{p}:L^2(X, L^p\otimes E)\longrightarrow L^2(X, L^p\otimes E)$ satisfying the conditions from Definition \ref{toe-def}. 

For any section $f\in \cC^{\infty}(X,\End(E))$,
  the  \emph{Berezin-Toeplitz quantization} of $f$ is defined by
\begin{equation}\label{toe6.3}
T_{f,\,p}:L^2(X,L^p\otimes E)\longrightarrow L^2(X,L^p\otimes E)\,,
\quad T_{f,\,p}=P_p\,f\,P_p\,.
\end{equation} 

Now, by the same argument as in Lemma \ref{toet2.1}, we get
\begin{lemma} \label{toet6.1}
For any $\varepsilon>0$ and any $l,m\in\N$ there exists $C_{l,m,{\varepsilon}}>0$ such that 
\begin{equation} \label{toe6.4}
|T_{f,\,p}(x,x')|_{\cC^m(X\times X)}\leqslant C_{l,m,{\varepsilon}}p^{-l}
\end{equation}
for all $p\geqslant 1$ and all $(x,x')\in X\times X$ 
with $d(x,x')>\varepsilon$, where the $\cC^m$-norm is induced by $\nabla^L,\nabla^E$ and $h^L,h^E,g^{TX}$.
\end{lemma}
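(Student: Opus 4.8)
The plan is to imitate the proof of Lemma \ref{toet2.1}, replacing its two inputs about the Bergman kernel by their orbifold counterparts. The first input is the \emph{far off-diagonal decay}: for every $l,m\in\N$ and $\varepsilon>0$ there exists $C_{l,m,\varepsilon}>0$ with
\begin{equation*}
|P_p(x,x')|_{\cC^m(X\times X)}\leqslant C_{l,m,\varepsilon}\,p^{-l}\quad\text{whenever } d(x,x')\geqslant\varepsilon\,.
\end{equation*}
This is the analogue of \eqref{toe2.6a} and follows from the spectral gap \eqref{pb4.17} through \eqref{pb4.21}, together with the vanishing $F(D_p)(x,x')=0$ for $d(x,x')\geqslant\varepsilon$ coming from finite propagation speed on the orbifold; near a singular point one uses the representation \eqref{pb4.22} of $F(D_p)$ as a finite sum over $G_U$, each summand of which vanishes once $d(x,x')\geqslant\varepsilon$. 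The second input is a \emph{uniform polynomial bound}: for every $m\in\N$ there exist $C_m>0$, $M_m>0$ with $|P_p(x,x')|_{\cC^m(X\times X)}\leqslant C_m\,p^{M_m}$ for all $(x,x')\in X\times X$. This follows from the near off-diagonal expansion of $P_p$, which holds on compact subsets of $X_{\rm reg}$ exactly as in Theorem \ref{tue17}, and near the singular set via the group-averaged expansion built from $\cP$ in \eqref{pb4.25}; away from the diagonal the first input already gives decay of order $\mO(p^{-\infty})$, so the two together cover all of $X\times X$.

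With these two facts in hand, I would write the Toeplitz kernel as the orbifold integral
\begin{equation*}
T_{f,\,p}(x,x')=\int_X P_p(x,x'')\,f(x'')\,P_p(x'',x')\,dv_X(x'')\,,
\end{equation*}
the orbifold analogue of \eqref{toe2.5}, where the integration is understood in the sense of \eqref{pb4.5} and, locally near a singular point, the integrand is supplied by the composition formula \eqref{pb4.9}. Now fix $(x,x')$ with $d(x,x')>\varepsilon$ and differentiate in $x,x'$ up to order $m$; the derivatives fall on the two Bergman kernel factors. For each $x''\in X$ the triangle inequality forces $d(x,x'')>\varepsilon/2$ or $d(x'',x')>\varepsilon/2$, so at least one factor is in its far off-diagonal regime and bounded by $C_{l',m,\varepsilon/2}\,p^{-l'}$ with $l'$ as large as we please, while the other is controlled by $C_m\,p^{M_m}$ and $f$ is bounded together with its derivatives on the compact orbifold $X$. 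Choosing $l'=l+M_m$ and integrating over the compact $X$ (of finite volume) yields $|T_{f,\,p}(x,x')|_{\cC^m(X\times X)}\leqslant C_{l,m,\varepsilon}\,p^{-l}$.

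The routine points here are the differentiation under the integral sign and the triangle-inequality bookkeeping, which go through verbatim from the smooth case. The only genuinely orbifold-specific step, and the place I expect the main care to be needed, is justifying the composition/integration formula for $T_{f,\,p}(x,x')$ uniformly up to the singular set: one must verify that the local expression \eqref{pb4.9} for the kernel of $P_p\circ(f\,P_p)$, with its finite averaging over $G_U$, patches to the global integral formula above and is compatible with the $\cC^m$-norms defined through the orbifold charts. Once this bookkeeping is settled the rapid decay is immediate, since the averaging over $G_U$ is a finite sum and each of the two Bergman factors is already known to decay or grow as required.
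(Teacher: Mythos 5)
Your proposal is correct and follows essentially the same route as the paper: the paper proves the orbifold lemma "by the same argument as in Lemma \ref{toet2.1}", i.e.\ by combining the far off-diagonal decay of $P_p$ (via the spectral gap \eqref{pb4.17}, the comparison \eqref{pb4.21} and finite propagation speed, with the $G_U$-averaging \eqref{pb4.22} near singular points) with a uniform polynomial bound coming from the near-diagonal expansion, then feeding both into the integral formula for $T_{f,\,p}(x,x')$ and splitting via the triangle inequality. Your additional remarks on patching the local composition formula \eqref{pb4.9} up to the singular set correctly identify the only orbifold-specific bookkeeping involved.
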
 

As in Section \ref{s3.3} we obtain next the asymptotic expansion of 
the kernel $T_{f,\,p}(x,x')$ in a neighborhood of the diagonal. 

We need to introduce the appropriate analogue of the condition introduced in the Notation \ref{noe2.7}
 in the orbifold case, in order to take into account the group action 
associated to an orbifold chart.
Let $\{\Theta_p\}_{p\in\N}$ be a sequence of linear operators 
$\Theta_p: L^2(X,L^p\otimes E)\longrightarrow L^2(X,L^p\otimes E)$ 
with smooth kernel $\Theta_p(x,y)$ with respect to $dv_X(y)$.

\begin{condition}\label{coe2.71}
Let $k\in\N$. Assume that for every open set $U\in\mathcal{U}$ and 
every orbifold chart
$(G_U,\widetilde{U})\stackrel{\tau_U}{\longrightarrow} U$, 
there exists a sequence of kernels 
$\{\wi{\Theta}_{p, U}(\wi{x},\wi{x}^{\,\prime})\}_{p\in\N}$ and a family 
$\{Q_{r,\,x_0}\}_{0\leqslant r\leqslant k,\,x_0\in X}$ such that 
\begin{itemize}
\item[(a)] $Q_{r,\,x_0}\in \End( E)_{x_0}[\wi{Z},\wi{Z}^{\prime}]$\,, 
\item[(b)] $\{Q_{r,\,x_0}\}_{r\in\N,\,x_0\in X}$ is smooth with respect 
to the parameter $x_0\in X$,
\item[(c)] for every fixed $\var''>0$ and every 
$\wi{x},\wi{x}^{\,\prime}\in \wi{U}$ the following holds
\begin{equation} \label{toe6.5}
\begin{split}
& (g,1)\wi{\Theta}_{p, U}(g^{-1}\wi{x},\wi{x}^{\,\prime})=
(1,g^{-1})\wi{\Theta}_{p, U}(\wi{x},g\wi{x}^{\,\prime})
\quad \text{for any } \, \, g\in G_{U}\; \text{(cf. \eqref{pb4.22a})}, \\
&\wi{\Theta}_{p, U}(\wi{x},\wi{x}^{\,\prime})= \cO(p^{-\infty}) \quad
 \quad\text{for}\, \,   d(x,x^\prime)>\var'',\\
&\Theta_{p}(x,x^\prime)
= \sum_{g\in G_{U}} (g,1)\wi{\Theta}_{p, U}(g^{-1}\wi{x},\wi{x}^{\,\prime})
+ \cO(p^{-\infty}),
\end{split}\end{equation}
and moreover, for every relatively compact open subset $\wi{V}\subset \wi{U}$, the relation
\begin{equation} \label{toe6.6}
p^{-n}\, \wi{\Theta}_{p,U,\wi{x}_0}(\wi{Z},\wi{Z}^\prime)\cong \sum_{r=0}^k 
(Q_{r,\,\wi{x}_0} \cP_{\wi{x}_0})
(\sqrt{p}\wi{Z},\sqrt{p}\wi{Z}^{\prime})p^{-\frac{r}{2}}
+\mO(p^{-\frac{k+1}{2}}),\:\: \text{for $\wi{x}_0\in \wi{V}$},
\end{equation}
holds in the sense of \eqref{toe2.7}. 
\end{itemize}
\end{condition}
\begin{notation}\label{noe2.71}
If the sequence $\{\Theta_p\}_{p\in\N}$ satisfies Condition \ref{coe2.71},
 we write
\begin{equation} \label{toe6.7}
p^{-n}\, \Theta_{p,\,x_0}(Z,Z^\prime)\cong \sum_{r=0}^k 
(Q_{r,\,x_0} \cP_{x_0})(\sqrt{p}Z,\sqrt{p}Z^{\prime})p^{-\frac{r}{2}}
+\mO(p^{-\frac{k+1}{2}})\,.
\end{equation}
\end{notation}
Note that although the Notations \ref{noe2.71} and \ref{noe2.7} 
are formally similar, they have different meaning.

\begin{lemma}\label{toet6.2} The smooth family
$Q_{r,\,x_0}\in \End( E)_{x_0}[\wi{Z},\wi{Z}^{\prime}]$ 
in Condition \ref{coe2.71} is uniquely determined by $\Theta_p$.
\end{lemma}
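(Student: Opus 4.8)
The plan is to reduce the claim to a uniqueness statement for the manifold-type expansion of Notation \ref{noe2.7}, and then to propagate it from the regular locus $X_{\rm reg}$ to all of $X$ using the smooth dependence on $x_0$. The first step is the following \emph{manifold uniqueness}: for a \emph{fixed} base point $x_0$, if a sequence of smooth kernels admits two expansions in the sense of Notation \ref{noe2.7}, with coefficients $\{Q_{r,\,x_0}\}$ and $\{Q'_{r,\,x_0}\}$ respectively, then $Q_{r,\,x_0}=Q'_{r,\,x_0}$ for every $r$. The second step shows that near a regular point the orbifold kernel $\Theta_p$ determines the near-diagonal behaviour of its chart lift $\wi{\Theta}_{p,U}$, so that manifold uniqueness applies; the singular points are then reached by density and smoothness.

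To prove manifold uniqueness, put $R_r=Q_{r,\,x_0}-Q'_{r,\,x_0}$ and suppose some $R_r\neq0$; let $r_0$ be the smallest such index and fix $k=r_0$. Subtracting the two decompositions \eqref{toe2.71} (the common $\Theta_{p,\,x_0}$-term cancels) gives
\begin{equation*}
\sum_{r=r_0}^{k}(R_r\cP_{x_0})(\sqrt{p}Z,\sqrt{p}Z')\,\kappa_{x_0}^{-1/2}(Z)\kappa_{x_0}^{-1/2}(Z')\,p^{-r/2}
=\Psi'_{p,\,k,\,x_0}(Z,Z')-\Psi_{p,\,k,\,x_0}(Z,Z')+\mO(p^{-\infty}),
\end{equation*}
whose right-hand side is bounded by $C\,p^{-(k+1)/2}(1+\sqrt{p}|Z|+\sqrt{p}|Z'|)^M e^{-C_0\sqrt{p}|Z-Z'|}$. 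I now rescale by setting $Z=W/\sqrt{p}$, $Z'=W'/\sqrt{p}$ and multiply by $p^{r_0/2}$. As $\kappa_{x_0}(0)=1$ by \eqref{atoe2.7}, the factors $\kappa_{x_0}^{-1/2}(W/\sqrt{p})$ and $\kappa_{x_0}^{-1/2}(W'/\sqrt{p})$ tend to $1$; the summands with $r>r_0$ carry a factor $p^{(r_0-r)/2}\to0$; and the right-hand side is $\mO(p^{-1/2})$. Letting $p\to\infty$ yields $(R_{r_0}\cP_{x_0})(W,W')=0$ for all $W,W'$, and since $\cP_{x_0}$ is nowhere zero by \eqref{toe1.3} this forces the polynomial $R_{r_0}$ to vanish, a contradiction. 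Hence all $R_r=0$.

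For the orbifold statement, suppose $\{Q_{r,\,x_0}\}$ and $\{Q'_{r,\,x_0}\}$ both satisfy Condition \ref{coe2.71} for the same $\Theta_p$, with chart lifts $\wi{\Theta}_{p,U}$ and $\wi{\Theta}'_{p,U}$. Fix $x_0\in X_{\rm reg}$ and a chart $(G_U,\wi{U})\to U$ with a lift $\wi{x}_0$ whose $G_U$-stabilizer is trivial, so that $\delta:=\min_{g\neq1}d(\wi{x}_0,g^{-1}\wi{x}_0)>0$. For $\wi{x},\wi{x}'$ within $\delta/4$ of $\wi{x}_0$ and $g\neq1$, the points $g^{-1}\wi{x}$ and $\wi{x}'$ stay a distance $\geqslant\delta/2$ apart in $\wi{U}$, so the off-diagonal estimate in \eqref{toe6.5} makes the corresponding terms $\mO(p^{-\infty})$; the third relation in \eqref{toe6.5} then reduces to $\Theta_p(x,x')=\wi{\Theta}_{p,U}(\wi{x},\wi{x}')+\mO(p^{-\infty})$, and likewise with primes. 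Hence $\wi{\Theta}_{p,U}$ and $\wi{\Theta}'_{p,U}$ agree up to $\mO(p^{-\infty})$ near $\wi{x}_0$, and manifold uniqueness applied to their expansion \eqref{toe6.6} gives $Q_{r,\,\wi{x}_0}=Q'_{r,\,\wi{x}_0}$, i.e.\ $Q_{r,\,x_0}=Q'_{r,\,x_0}$, for every regular $x_0$. Since $X_{\rm reg}$ is dense in $X$ and the families are smooth in $x_0$ by hypothesis (b) of Condition \ref{coe2.71}, they agree on all of $X$. This also shows the coefficients are independent of the chosen chart.

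The main obstacle is the passage through $X_{\rm sing}$: the coefficients are only directly pinned down at regular base points, where a single group element governs the near-diagonal behaviour, so one must combine the reduction to the identity term with the smooth dependence on $x_0$ and the density of $X_{\rm reg}$ to cover the singular stratum. By contrast, the rescaling step in manifold uniqueness is routine, though it relies on the precise Gaussian remainder estimate of Notation \ref{noe2.7} and on $\kappa_{x_0}(0)=1$.
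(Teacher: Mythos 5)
Your argument is correct and follows essentially the same route as the paper's proof: at a regular point the $G_U$-action on the fibre of $\tau_U$ is free, so the off-diagonal decay in \eqref{toe6.5} reduces the $G_U$-sum to the identity term and shows that $\Theta_p$ determines $\wi{\Theta}_{p,U}$ near the diagonal up to $\mO(p^{-\infty})$; uniqueness of the coefficients then holds on $X_{\rm reg}$ and propagates to all of $X$ by density and smooth dependence on $x_0$. The only difference is that you make explicit the rescaling argument showing that an expansion in the sense of Notation \ref{noe2.7} determines its polynomial coefficients, a step the paper leaves implicit.
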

\begin{proof} Clearly, for $W\subset U$, the restriction of $\wi{\Theta}_{p,U}$ 
to $\wi{W} \times \wi{W}$ verifies \eqref{toe6.5}, thus we can take 
$\wi{\Theta}_{p,W}=\wi{\Theta}_{p,U}|_{\wi{W} \times \wi{W}}$.
Since $G_U$ acts freely on $\tau_U^{-1}(U_{\rm reg})\subset \wi{U}$, 
we deduce from \eqref{toe6.5} and \eqref{toe6.6} that 
\begin{align}\label{toe6.8}
\Theta_{p,\,x_0}(Z,Z^\prime)= \wi{\Theta}_{p,\,U,\,\wi{x}_0}(\wi{Z},\wi{Z}^\prime)
+ \cO(p^{-\infty})\,,
\end{align}
for every $x_0\in U_{\rm reg}$
and $|\wi{Z}|,|\wi{Z}^\prime|$ small enough.
We infer from  \eqref{toe6.6} and \eqref{toe6.8} that
$Q_{r,\,x_0}\in \End( E)_{x_0}[\wi{Z},\wi{Z}^{\prime}]$ 
is uniquely determined for $x_0\in X_{\rm reg}$\,. 
Since $Q_{r,\,x_0}$ depends smoothly on $x_0$, 
its lift to $\wi{U}$ is smooth. Since the set $\tau_U^{-1}(U_{\rm reg})$ 
is dense in $\wi{U}$, we see that
the smooth family $Q_{r,\,x_0}$ is uniquely determined by $\Theta_p$.
\end{proof}

\begin{lemma}\label{toet6.3} There exist polynomials 
$J_{r,\,x_0}, Q_{r,x_0}(f)$ $\in \End(E)_{x_0}[\wi{Z},\wi{Z}^{\prime}]$ so that  
Theorem \ref{tue17}, Lemmas \ref{toet2.1}, \ref{toet2.3} and \eqref{toe2.20}
still hold under the notation \eqref{toe6.7}.
Moreover, 
\begin{equation}\label{toe6.15}
J_{0,\,x_0}= \Id_E, \quad J_{1,\,x_0}=0.
\end{equation}
\end{lemma}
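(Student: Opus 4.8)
The plan is to lift everything to an orbifold chart and reduce to the already-established smooth case by means of the group-averaging formulas of Section \ref{pbs4.1}. The regular case is covered word-for-word by the proofs of Theorems \ref{tue16}--\ref{tue17} and Lemmas \ref{toet2.1}--\ref{toet2.3}, so I would fix a point $x_0\in X_{\rm sing}$ and an orbifold chart $(G_U,\wi{U})\stackrel{\tau_U}{\to}U$ with lift $\wi{x}_0\in\wi{U}$ a fixed point of $G_{x_0}$. On the \emph{smooth} manifold $\wi{U}$ the objects $\wi{D}_p$, $F(\wi{D}_p)$ and the lift $\wi{f}$ of $f$ are genuine and $G_U$-equivariant, so the smooth-case results apply verbatim to the covering kernel $\wi{P}_{p,U}:=F(\wi{D}_p)$. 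In particular Theorem \ref{tue17} gives $\wi{P}_{p,U}$ the near off-diagonal expansion \eqref{toe2.9} with the universal polynomials $\wi{J}_{r,\wi{x}_0}$, satisfying $\wi{J}_{0,\wi{x}_0}=\Id_E$ and $\wi{J}_{1,\wi{x}_0}=0$ by \eqref{bk2.33}, and $\wi{P}_{p,U}$ enjoys the far off-diagonal decay of Theorem \ref{tue16}.

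First I would verify the Bergman case. By \eqref{pb4.21} and \eqref{pb4.22} one has $P_p(x,x')=\sum_{g\in G_U}(g,1)\wi{P}_{p,U}(g^{-1}\wi{x},\wi{x}')+\cO(p^{-\infty})$, which is exactly the third identity of \eqref{toe6.5} with $\wi{\Theta}_{p,U}=\wi{P}_{p,U}$; the equivariance \eqref{pb4.22a} is its first identity and the far off-diagonal decay of $\wi{P}_{p,U}$ its second. Thus $P_p$ satisfies Condition \ref{coe2.71}, so the expansion \eqref{toe6.7} holds with $Q_{r,\wi{x}_0}=\wi{J}_{r,\wi{x}_0}$. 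By Lemma \ref{toet6.2} the descended family $J_{r,x_0}$ is uniquely determined by $P_p$; since on the dense regular set it coincides with the smooth-case polynomials, it equals them everywhere, whence \eqref{toe6.15}.

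Next I would treat the Toeplitz kernel. Writing $T_{f,p}=P_p\circ(fP_p)$, the lift of the operator $fP_p$ is the kernel $\wi{f}\,\wi{P}_{p,U}$ (using $G_U$-invariance of $\wi{f}$), so applying the orbifold composition rule \eqref{pb4.9} to these two factors gives $T_{f,p}(x,x')=\sum_{g\in G_U}(g,1)\wi{T}_{f,p,U}(g^{-1}\wi{x},\wi{x}')+\cO(p^{-\infty})$, where $\wi{T}_{f,p,U}=\int_{\wi{U}}\wi{P}_{p,U}(\cdot,\wi{x}'')\,\wi{f}(\wi{x}'')\,\wi{P}_{p,U}(\wi{x}'',\cdot)\,dv_{\wi{U}}(\wi{x}'')$ is exactly the smooth-case Toeplitz kernel on $\wi{U}$. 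Hence Lemma \ref{toet2.1} on the cover yields the off-diagonal decay of $\wi{T}_{f,p,U}$ (which is Lemma \ref{toet6.1}), the equivariance of $F(\wi{D}_p)$ and of $\wi{f}$ gives \eqref{toe6.5}, and Lemma \ref{toet2.3} on the cover supplies the expansion \eqref{toe6.6} with the polynomials $Q_{r,\wi{x}_0}(f)$ of \eqref{toe2.14}. Consequently $T_{f,p}$ satisfies Condition \ref{coe2.71}, so \eqref{toe6.7} holds; uniqueness (Lemma \ref{toet6.2}) together with density of $X_{\rm reg}$ identifies $Q_{r,x_0}(f)$ with the smooth-case expressions, so \eqref{toe2.15} and the computation \eqref{toe2.20} of $Q_{1,x_0}(f)$ remain valid.

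The main obstacle is bookkeeping rather than analysis: one must check that passing from $P_p$ to $F(D_p)$ via \eqref{pb4.21} and back, together with the group sum in \eqref{pb4.9}, genuinely produces a single lifted kernel $\wi{\Theta}_{p,U}$ of the form required by Condition \ref{coe2.71}, with all $g\neq1$ cross-terms and all replacement errors absorbed into one uniform $\cO(p^{-\infty})$ remainder. The point is that near $\wi{x}_0$ the smooth-case expansion and the far off-diagonal estimate on $\wi{U}$ hold uniformly in the base point, so the finite averaging over $G_U$ is harmless; once this is in place, every assertion reduces to its smooth-case counterpart on the cover.
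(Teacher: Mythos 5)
Your proposal is correct and follows essentially the same route as the paper: localize via $F(D_p)$ using \eqref{pb4.21}, lift to an orbifold chart where the smooth-case results (Theorems \ref{tue16}--\ref{tue17}, Lemmas \ref{toet2.1}, \ref{toet2.3}, and \eqref{bk2.30}--\eqref{bk2.33}) apply to the equivariant kernel $F(\wi{D}_p)$, and descend through the group-sum and composition formulas \eqref{pb4.22a}, \eqref{pb4.22}, \eqref{pb4.9}, exactly as in the paper's \eqref{toe6.17}. One caution on wording only: near $X_{\rm sing}$ the $g\neq 1$ terms of the group sum are \emph{not} absorbed into the $\cO(p^{-\infty})$ remainder --- they persist as part of the exact decomposition required in \eqref{toe6.5}, and only the $P_p\leftrightarrow F(D_p)$ replacement and cut-off errors are $\cO(p^{-\infty})$; your displayed formulas do reflect this correctly even if the closing paragraph's phrasing suggests otherwise.
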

\begin{proof}   
The analogues of Theorems \ref{tue16}-\ref{tue17} for 
the current situation and \eqref{pb4.22a}, \eqref{pb4.22} show that
Theorem \ref{tue17} and Lemmas \ref{toet2.1}, \ref{toet2.3} still hold under 
the notation \eqref{toe6.7}.
By \eqref{bk2.30}, we have $\mO_1=0$. 
Hence \eqref{bk2.31} entails \eqref{toe6.15}.
Moreover, \eqref{pb4.21} implies
\begin{align}\label{toe6.17}
T_{f,\,p}(x,x^\prime)= \int_{X} F(D_p)(x,x'')f(x'')
 F(D_p)(x'',x^\prime) dv_X(x'') + \cO(p^{-\infty}).
\end{align}
Therefore, we deduce from \eqref{pb4.9}, \eqref{pb4.22a},
\eqref{pb4.22} and \eqref{toe6.17} that Lemmas \ref{toet2.3} 
and \eqref{toe2.20} still hold under the notation \eqref{toe6.7}.
\end{proof}

We have therefore orbifold asymptotic expansions for the Bergman
and Toeplitz kernels, analogues 
to those for smooth manifolds. Following the strategy used in \S \ref{s3.6} 
we can prove a characterization of Toeplitz operators as in Theorem \ref{toet3.1} (see \cite[Th.\,6.11]{MM08b}). 

Proceeding as in \S \ref{s3.6} we can show that the set of Toeplitz operators 
on a compact orbifold is 
closed under the composition of operators, so forms an algebra. 
\begin{theorem}[{\cite[Th.\,6.13]{MM08b}}]\label{toet6.7}
Let $(X,\om)$ be a compact K{\"a}hler orbifold and let $(L,h^L)$ be 
a holomorphic Hermitian proper orbifold line bundle satisfying
the prequantization condition \eqref{prequantum}.
Let $(E,h^E)$ be an arbitrary holomorphic Hermitian proper orbifold 
vector bundle on $X$.

Consider $f,g\in\cC^\infty(X,\End(E))$. 
Then the product of the Toeplitz operators 
$T_{f,\,p}$ and  $T_{g,\,p}$ is a Toeplitz operator, 
more precisely, it admits an asymptotic expansion
in the sense of \eqref{toe2.3},
where $C_r(f,g)\in\cC^\infty(X,\End(E))$
and $C_r$ are bi-differential operators defined locally as in \eqref{toe4.2b}
on each covering $\wi{U}$ of an orbifold chart
 $(G_U,\widetilde{U})\stackrel{\tau_U}{\longrightarrow}U$. 
In particular $C_0(f,g)=fg$.

If $f,g\in\cC^\infty(X)$, then \eqref{toe4.4} holds.

 Relation \eqref{toe4.17} also holds for any $f\in\cC^\infty(X,\End(E))$.
\end{theorem}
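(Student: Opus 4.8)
The plan is to mirror the smooth compact case (Theorems \ref{toet4.1b} and \ref{toet4.2}), replacing the kernel calculus there by its equivariant orbifold counterpart. The main tools are the orbifold characterization of Toeplitz operators (the analogue of Theorem \ref{toet3.1} referenced as \cite[Th.\,6.11]{MM08b}), together with Lemmas \ref{toet6.1}--\ref{toet6.3}, which transport the far off-diagonal decay and the near off-diagonal expansion of the Bergman and Toeplitz kernels to the orbifold setting in the sense of Notation \ref{noe2.71}.

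First I would verify that $\{T_{f,\,p}T_{g,\,p}\}$ satisfies the three conditions of the orbifold characterization. Condition (i), namely $P_p(T_{f,\,p}T_{g,\,p})P_p = T_{f,\,p}T_{g,\,p}$, is immediate from $T_{f,\,p}=P_p\,f\,P_p$ and $P_p^2=P_p$. Condition (ii), the far off-diagonal decay $\mO(p^{-\infty})$, follows from Lemma \ref{toet6.1} applied to each factor together with the orbifold composition formula \eqref{pb4.9}. For condition (iii), the key point is that by \eqref{pb4.9} the composition $T_{f,\,p}\circ T_{g,\,p}$ lifts on each orbifold chart $(G_U,\wi{U})\to U$ to the ordinary composition of the equivariant lifts on $\wi{U}$. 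The off-diagonal decay ensures that, in the near-diagonal regime, only the identity element of $G_U$ contributes to the leading orders, so the local analysis reduces verbatim to the smooth computation carried out in the proof of Theorem \ref{toet4.1b}: as in \eqref{toe4.5}--\eqref{toe4.7}, one obtains the near off-diagonal expansion with local coefficients
\begin{equation*}
Q_{r,\,\wi{x}_0}(f,g) = \sum_{r_1+r_2=r}\cK\big[Q_{r_1,\,\wi{x}_0}(f),\,Q_{r_2,\,\wi{x}_0}(g)\big]\,,
\end{equation*}
while the equivariance \eqref{toe6.5} is preserved by composition. Applying the orbifold characterization then yields that $T_{f,\,p}T_{g,\,p}$ is a Toeplitz operator whose coefficients $C_r(f,g)$ are bi-differential operators defined chartwise, with leading term $C_0(f,g)=\cK[f,g]=fg$ exactly as in \eqref{toe4.8}.

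For the remaining assertions I would argue as follows. When $f,g\in\cC^\infty(X)$, the commutation relation \eqref{toe4.4} follows from the identity $C_1(f,g)-C_1(g,f)=\imat\{f,g\}\Id_E$: since $C_1$ is given locally by the same $\cK[\cdot,\cdot]$-formula as in the smooth case, this difference is computed chartwise exactly as in \eqref{toe4.16}, and the Poisson bracket is globally well defined on the orbifold. The norm relation \eqref{toe4.17} is obtained as in Theorem \ref{toet4.2}: choosing $x_0$ and $u_0$ with $|f(x_0)(u_0)|=\|f\|_\infty$ and using the orbifold Bergman expansion of Lemma \ref{toet6.3}, the peak sections $S^p_{x_0}=p^{-n/2}P_p(e_L^{\otimes p}\otimes u_0)$ satisfy $\|T_{f,\,p}S^p_{x_0}-f(x_0)S^p_{x_0}\|_{L^2}\leqslant C p^{-1/2}\|S^p_{x_0}\|_{L^2}$, which combined with the trivial bound $\|T_{f,\,p}\|\leqslant\|f\|_\infty$ gives $\lim_p\|T_{f,\,p}\|=\|f\|_\infty$.

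The main obstacle is condition (iii) at the singular locus: one must check that the equivariant composition \eqref{pb4.9}, with its sum over $G_U$, still produces an expansion of the form required by Condition \ref{coe2.71}. The crux is to show that the non-identity group elements contribute only $\mO(p^{-\infty})$ near the diagonal, so that the coefficients $Q_{r,\,\wi{x}_0}(f,g)$ are genuinely local and $G_U$-equivariant; once this is established, the smooth kernel calculus of Lemma \ref{toet1.1} applies unchanged on the chart cover and the whole argument goes through as in the smooth case.
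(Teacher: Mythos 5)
Your overall strategy is the paper's: establish the orbifold analogues of the kernel expansions via equivariant lifts to charts (Lemmas \ref{toet6.1}--\ref{toet6.3} and Condition \ref{coe2.71}), invoke the orbifold version of the characterization Theorem \ref{toet3.1}, and then verify its three hypotheses for $T_{f,\,p}T_{g,\,p}$ exactly as in the smooth case, arriving at $Q_{r,\,x_0}(f,g)=\sum_{r_1+r_2=r}\cK[Q_{r_1,\,x_0}(f),Q_{r_2,\,x_0}(g)]$ and $C_0(f,g)=fg$. Conditions (i) and (ii) are handled correctly, as are the commutator identity and the norm limit \eqref{toe4.17} (for the latter you should take the peak point $x_0$ in $X_{\rm reg}$, which is harmless: $X_{\rm reg}$ is open and dense and $f$ is continuous, so $\sup_X=\sup_{X_{\rm reg}}$, and the smooth-type expansion of $P_p$ holds on compact subsets of $X_{\rm reg}$).

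The one genuine flaw is the claim, which you single out as ``the crux'', that the non-identity elements of $G_U$ contribute only $\mO(p^{-\infty})$ near the diagonal. This is false near $X_{\rm sing}$: if $g$ lies in the isotropy group of a singular point $x$, then $g^{-1}\wi{x}=\wi{x}$, so $(g,1)\wi{P}_p(g^{-1}\wi{x},\wi{x}^{\,\prime})$ is of the same order as the identity term --- this is exactly why the diagonal asymptotics of the orbifold Bergman kernel differ from the smooth ones at singular points. The resolution is not negligibility of these terms but the very formulation of Condition \ref{coe2.71}: the expansion \eqref{toe6.6} is required only of the equivariant lift $\wi{\Theta}_{p,U}$, while the full sum over $G_U$ is carried along explicitly in \eqref{toe6.5}. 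Since by \eqref{pb4.9} a lift of $T_{f,\,p}\circ T_{g,\,p}$ is (up to $\mO(p^{-\infty})$, after localizing with $F(D_p)$) the composition $\wi{T}_{f,\,p,U}\circ\wi{T}_{g,\,p,U}$ of the lifts, the smooth kernel calculus of Lemma \ref{toet1.1} applies to the lift and the equivariance is preserved under composition; nothing further about the group sum is needed. The statement ``only $g=1$ contributes'' enters only at regular points, where it is used (as in Lemma \ref{toet6.2}) to identify and prove uniqueness of the coefficients $Q_{r,\,x_0}$. You in fact cite the correct mechanism yourself; deleting the negligibility claim and relying on \eqref{pb4.9} together with Condition \ref{coe2.71} closes the argument as in the paper.
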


\begin{remark}\label{toet4.31} As in Remark \ref{toer1}, 
Theorem \ref{toet6.7} shows that
on every compact K{\"a}hler orbifold $X$ 
admitting a prequantum line bundle $(L,h^L)$,
we can define in a canonical way an associative star-product
\begin{equation}\label{toe6.81}
f*_{\hbar}g=\sum_{l=0}^\infty \hbar^{l}C_l(f,g)\in\cC^\infty(X,\End(E))[[\hbar]]
\end{equation}
for every $f,g\in \cC^\infty(X,\End(E))$,
called the \emph{Berezin-Toeplitz star-product}\/.
Moreover, $C_l(f,g)$ are bi-differential operators 
defined locally as in the smooth case.
\end{remark}

\section{Quantization of symplectic manifolds}

We will briefly describe in this Section how to generalize the ideas used before in the K\"ahler case in order to study the Toeplitz operators and Berezin-Toeplitz quantization for symplectic manifolds. For details we refer the reader to \cite{MM07,MM08b}.
We recall in Section 4.1 the definition of the spin$^c$ Dirac operator and formulate the spectral gap property 
for prequantum line bundles. In Section 4.2 we state the asymptotic expansion of the composition of Toeplitz operators.

\subsection{Spectral gap of the spin$^c$ Dirac operator}
We will first show that in the general symplectic case the kernel of the spin$^c$ operator is a good substitute for the space of holomorphic sections used in K\"ahler quantization. 

Let $(X,\omega)$ be a compact symplectic manifold, $\dim_\mathbb{R} X=2n$, with compatible almost complex structure $J:TX\to TX$. Let $g^{TX}$ be the associated Riemannian metric compatible with $\omega$, i.e., $g^{TX}(u,v)=\omega(u,Jv)$. Let $(L,h^L,\nabla^L)\to X$ be Hermitian line bundle, endowed with a Hermitian metric $h^L$ and a Hermitian connection $\nabla^L$, whose curvature is $R^L=(\nabla^L)^2$. We assume that the \emph{prequantization condition} \eqref{prequantum} is fulfilled.
Let $(E,h^E,\nabla^E)\to X$ be a Hermitian vector bundle. 
We will be concerned with asymptotics in terms of high tensor powers $L^p\otimes E$, when $p\to\infty$,
that is, we consider the semi-classical limit $\hbar=1/p\to 0$.

Let $\nabla^{\rm det}$ be the connection on $\det(T^{(1,0)}X)$ induced by the projection of the Levi-Civita connection $\nabla^{TX}$ on $T^{(1,0)}X$.
Let us consider the Clifford connection $\nabla^{\text{Cliff}}$ on $\Lambda^{\scriptscriptstyle{\bullet}}(T^{*(0,1)}X)$ associated to $\nabla^{TX}$ and to the connection $\nabla^{\rm det}$ on $\det(T^{(1,0)}X)$ (see e.g.\ \cite[\S\,1.3]{MM07}). The connections $\nabla^L$, $\nabla^E$ and $\nabla^{\text{Cliff}}$ induce the connection 
\[
\nabla_p=\nabla^{\text{Cliff}}\otimes\operatorname{Id}+\operatorname{Id}\otimes\nabla^{L^p\otimes E}\quad
\text{on $\Lambda^{\scriptscriptstyle{\bullet}}(T^{*(0,1)}X)\otimes L^p\otimes E$.}
\]
The \emph{spin$^c$ Dirac operator} is defined by 
\begin{equation}\label{defDirac}
D_p=\sum_{j=1}^{2n}\mathbf{c}(e_j)\nabla_{p,e_j}:
\Omega^{0,\scriptscriptstyle{\bullet}}(X,L^p\otimes E)\longrightarrow
\Omega^{0,\scriptscriptstyle{\bullet}}(X,L^p\otimes E)\,.
\end{equation}
where $\{e_j\}_{j=1}^{2n}$ local orthonormal frame of $TX$ and $\mathbf{c}(v)=\sqrt{2}({\overline v^\ast_{1,0}}\wedge-i_{v_{\,0,1}})$ 
is the Clifford action of $v\in TX$. Here we use the decomposition $v=v_{\,1,0}+v_{\,0,1}$, 
$v_{\,1,0}\in T^{(1,0)}X$, $v_{\,0,1}\in T^{(0,1)}X$.
 

If $(X,J,\omega)$ is K\"ahler then $D_p=\sqrt{2}(\overline{\partial}+\overline{\partial}^{\,*})$ so $\Ker(D_p)=H^0(X,L^p\otimes E)$ for $p\gg1$. The following result shows that $\Ker(D_p)$ has all semi-classical properties of $H^0(X,L^p\otimes E)$.
The proof is based on a 
direct application of the Lichnerowicz formula for $D_p^2$.
Note that the metrics $g^{TX}$, $h^L$ and $h^E$ induce an $L^2$-scalar product 
on $\Omega^{0,\scriptscriptstyle{\bullet}}(X,L^p\otimes E)$, whose completion is denoted 
$(\Omega^{0,\scriptscriptstyle{\bullet}}_{(2)}(X,L^p\otimes E),\|\cdot\|_{L^2})$. 

\begin{theorem}[{\cite[Th.\,1.1,\,2.5]{MM02}, \cite[Th.\,1.5.5]{MM07}}]\label{specDirac}
There exists $C>0$ such that for any $p\in\mathbb{N}$ and any 
$s\in\bigoplus_{k>0}\Omega^{0,k}(X,L^p\otimes E)$ we have
\begin{equation}\label{main1}
\Vert D_{p}s\Vert^2_{L^2}\geq(4\pi p-C)\Vert s\Vert^2_{L^2}\, .
\end{equation}
Moreover, 
the spectrum of $D^2_p$ verifies 
\begin{align}\label{diag5}
&\spec (D^2_p) \subset \{0\}\cup [4\pi p-C,+\infty[\,.
\end{align}
\end{theorem}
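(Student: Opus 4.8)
The plan is to apply the Lichnerowicz formula for the square of the spin$^c$ Dirac operator. Writing $\Delta^{\nabla_p}=(\nabla_p)^*\nabla_p$ for the Bochner Laplacian of $\nabla_p$, the formula reads
\begin{equation*}
D_p^2=\Delta^{\nabla_p}+\frac{r^X}{4}+\frac12\mathbf{c}(R^{\det})+\mathbf{c}(R^E)+p\,\mathbf{c}(R^L),
\end{equation*}
where $\mathbf{c}(\cdot)$ is the Clifford action of a $2$-form and the summand $\frac12\mathbf{c}(R^{\det})$ is the twisting curvature of $\Lambda^{\scriptscriptstyle\bullet}(T^{*(0,1)}X)$ regarded as a Clifford module. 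The only term growing with $p$ is $p\,\mathbf{c}(R^L)$; the other three are zeroth-order bundle endomorphisms, hence uniformly bounded in $p$, so they contribute at least $-C_0\,\|s\|^2_{L^2}$ for a constant $C_0>0$ independent of $p$.

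First I would compute $\mathbf{c}(R^L)$ explicitly. Choosing at each point a $g^{TX}$-orthonormal frame $\{w_j\}_{j=1}^n$ of $T^{(1,0)}X$ diagonalizing $R^L$, the prequantization condition \eqref{prequantum} together with the compatibility $g^{TX}(\cdot,\cdot)=\omega(\cdot,J\cdot)$ forces all eigenvalues to equal one, so that $\frac{\sqrt{-1}}{2\pi}R^L=\sqrt{-1}\sum_j w^j\wedge\ov{w}^j$. Using that $\mathbf{c}(w_j)$ acts as $\sqrt2\,\ov{w}^j\wedge$ (exterior multiplication) and $\mathbf{c}(\ov{w}_j)$ as $-\sqrt2\,i_{\ov{w}_j}$ (contraction), a short computation gives
\begin{equation*}
\mathbf{c}(R^L)=4\pi N-2\pi n\,\mathrm{Id},\qquad N:=\sum_{j=1}^n \ov{w}^j\wedge i_{\ov{w}_j},
\end{equation*}
where the number operator $N$ acts as multiplication by $k$ on $\Omega^{0,k}$. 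Thus on $\Omega^{0,k}$ the Lichnerowicz formula becomes $D_p^2=\Delta^{\nabla_p}+(4\pi k-2\pi n)p+\mathcal{O}(1)$.

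With this in hand I would combine the curvature term with the Bochner Laplacian: for $s\in\Omega^{0,k}$ with $k>0$,
\begin{equation*}
\|D_ps\|^2_{L^2}=\|\nabla_p s\|^2_{L^2}+(4\pi k-2\pi n)\,p\,\|s\|^2_{L^2}+\mathcal{O}(\|s\|^2_{L^2}).
\end{equation*}
The algebraic term alone suffices only for the high degrees $k\geq n/2+1$; for the remaining low positive degrees one must exploit the positivity of $\Delta^{\nabla_p}$. The crucial input is the magnetic (Landau-level) lower bound $\|\nabla_p s\|^2_{L^2}\geq(2\pi n p-C_1)\|s\|^2_{L^2}$, whose ground-state energy $2\pi n p$ exactly cancels the shift $-2\pi n p$ coming from $\mathbf{c}(R^L)$; this bound is sharp and is saturated in degree zero by the holomorphic sections. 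Feeding it in yields $\|D_ps\|^2_{L^2}\geq(4\pi k\,p-C)\|s\|^2_{L^2}\geq(4\pi p-C)\|s\|^2_{L^2}$ for every $k\geq1$, which is \eqref{main1}. I expect this step — controlling $\Delta^{\nabla_p}$ uniformly from below in the low positive degrees, rather than merely using $\Delta^{\nabla_p}\geq0$ — to be the main obstacle, since it is precisely where the global geometry and the non-integrability of $J$ enter.

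Finally, the spectral statement \eqref{diag5} follows from \eqref{main1} by an eigenvalue-transfer argument using that $D_p^2$ preserves the $\Z$-grading while $D_p$ exchanges $\Omega^{0,\mathrm{even}}$ and $\Omega^{0,\mathrm{odd}}$. Indeed \eqref{main1} gives $\spec(D_p^2|_{\Omega^{0,>0}})\subset[4\pi p-C,+\infty[$. If $\lambda>0$ is an eigenvalue of $D_p^2$ with eigensection $s\in\Omega^{0,0}$, then $t:=D_ps\in\Omega^{0,>0}$ is nonzero and satisfies $D_p^2 t=\lambda t$, so $\lambda\geq4\pi p-C$ as well. Hence every nonzero eigenvalue of $D_p^2$ is at least $4\pi p-C$, which is exactly \eqref{diag5}.
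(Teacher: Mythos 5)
Your route is exactly the one the paper indicates and the cited references carry out: the Lichnerowicz formula for $D_p^2$, the computation $\mathbf{c}(R^L)=4\pi N-2\pi n$ (correct with the stated conventions, since all eigenvalues of $\tfrac{\sqrt{-1}}{2\pi}R^L$ with respect to $g^{TX}=\omega(\cdot,J\cdot)$ equal $1$), and, as the decisive analytic input, the lower bound $\norm{\nabla_ps}_{L^2}^2\geqslant(2\pi np-C_1)\norm{s}_{L^2}^2$. That bound is precisely the key lemma of \cite{MM02}, \cite[Th.\,1.5.4]{MM07}, proved by splitting $\nabla_p$ into its $(1,0)$ and $(0,1)$ parts and using a Bochner--Kodaira-type identity $\norm{\nabla_p^{1,0}s}^2-\norm{\nabla_p^{0,1}s}^2=2\pi np\norm{s}^2+\mO(\norm{s}^2)$; you correctly single it out as the crux rather than proving it, and granting it your derivation of \eqref{main1} is the argument of the reference.

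One inaccuracy in the final step: for a general compatible almost complex structure $J$ the operator $D_p^2$ does \emph{not} preserve the $\Z$-grading of $\Omega^{0,\bullet}(X,L^p\otimes E)$ --- neither the Clifford connection nor $\mathbf{c}\big(R^E+\tfrac12 R^{\det}\big)$ is degree-preserving when $J$ is non-integrable --- it preserves only the $\Z_2$-grading. Consequently an eigensection of $D_p^2$ need not be homogeneous, your two cases ($s\in\Omega^{0,0}$ versus $s\in\Omega^{0,>0}$) are not exhaustive, and $\spec(D_p^2|_{\Omega^{0,>0}})$ is not well defined as written. The repair is immediate and is what the references do: split an eigensection into its even and odd parts, each again an eigensection; the odd part lies in $\Omega^{0,>0}$, where the quadratic-form estimate \eqref{main1} applies, and for a nonzero even eigensection $s$ with eigenvalue $\lambda>0$ one applies \eqref{main1} to $D_ps\in\Omega^{0,\mathrm{odd}}\subset\Omega^{0,>0}$, exactly as in your transfer argument. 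With that adjustment the proof is complete and coincides with the cited one.
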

\noindent
By the Atiyah-Singer index theorem we have for $p\gg1$
\begin{equation}\label{as}
\dim \Ker(D_p)=\int_{X}\operatorname{Td}(T^{(1,0)}X)\operatorname{ch}(L^{p}\otimes E)
=\rank (E)\,\frac{p^n}{n!}\int_X \omega^n+\mO(p^{n-1}) \,. 
\end{equation}
Theorem \ref{specDirac} shows the forms in $\Ker(D_p)$ concentrate asymptotically in the $L^2$ sense on their zero-degree component and \eqref{as} shows that $\dim\Ker(D_p)$ is a polynomial 
in $p$ of degree $n$, as in the holomorphic case. 
\subsection{Toeplitz operators in spin$^c$ quantization}
Let us introduce the orthogonal projection
$P_p:\Omega^{0,\scriptscriptstyle{\bullet}}_{(2)}(X,L^p\otimes E)\longrightarrow\Ker(D_p)$, called the Bergman projection in analogy to the K\"ahler case. Its integral kernel is called \emph{Bergman kernel}\/. The \emph{Toeplitz operator} with symbol $f\in\cC^{\infty}(X,\End(E))$ is
\[
T_{f,p}:\Omega^{0,\scriptscriptstyle{\bullet}}_{(2)}(X,L^p\otimes E)\to \Omega^{0,\scriptscriptstyle{\bullet}}_{(2)}(X,L^p\otimes E)\,,\quad T_{f,p}= P_{p}fP_{p}
\]
In analogy to the K\"ahler case we define a (generalized) {\em Toeplitz operator} 
is a sequence $(T_p)$ of linear operators
$T_{p}\in\End(\Omega^{0,\scriptscriptstyle{\bullet}}_{(2)}(X,L^p\otimes E))$
verifying $T_{p}=P_p\,T_p\,P_p$\,,
such that there exist a sequence $g_l\in\cC^\infty(X,\operatorname{End}(E))$ with the property that
for all $k\geq0$, there exists $C_k>0$ so that \eqref{toe2.3} is fulfilled.

A basic fact is that the Bergman kernel $P_p(\cdot,\cdot)$ of the Dirac operator has an asymptotic expansion similar to Theorems \ref{tue16} and \ref{tue17}. This was shown by Dai-Liu-Ma in \cite[Prop.\,4.1 and Th.\,4.18$^\prime$]{DLM04a} (see also
\cite[Th.\,8.1.4]{MM07}).
By the Bergman kernel expansion of Dai-Liu-Ma we obtain the expansion of the integral kernels of $T_{f,\,p}$\,, 
similar to Theorem \ref{toet2.3}.
Moreover, the characterization of Toeplitz operators in terms of the off-diagonal asymptotic expansion of their integral kernels, formulated in Theorem \ref{toet3.1}, holds also in the symplectic case (cf.\ \cite[Th.\,4.9]{MM08b}, \cite[Lemmas\,7.2.2,\,7.2.4,\,Th.\,7.3.1]{MM07}).
We obtain thus the symplectic analogue of Theorem \ref{toet4.1b}.
\begin{theorem}[{\cite[Th.\,1.1]{MM08b}, 
\cite[Th.\,8.1.10]{MM07}}]\label{toet4.1}
Let $f,g\in\cC^\infty(X,\End(E))$. The composition $(T_{f,\,p}\circ T_{g,\,p})$ 
is a Toeplitz operator, i.e., 
\begin{equation}\label{toe4.2}
T_{f,\,p}\circ T_{g,\,p}=\sum^\infty_{r=0}p^{-r}T_{C_r(f,\,g),\,p}+\mO(p^{-\infty}),
\end{equation} 
where $C_r$ are bi-differential operators, 
$C_0(f,g)=fg$ and $C_r(f,g)\in\cC^\infty(X,\End(E))$.
Let $f,g\in\cC^\infty(X)$ and let $\{\cdot,\cdot\}$ be the Poisson bracket on $(X,2\pi\omega)$, defined as in 
\eqref{toe4.1a}. Then 
\begin{equation}\label{toe4.3b}
C_1(f,g)-C_1(g,f)= \sqrt{-1}\{f,g\} \Id_E,
\end{equation} 
and therefore
\begin{equation}\label{toe4.4a}
\big[T_{f,\,p}\,,T_{g,\,p}\big]=\frac{\sqrt{-1}}{\, p}T_{\{f,\,g\},\,p}+\mO(p^{-2}).
\end{equation} 
\end{theorem}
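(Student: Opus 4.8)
The plan is to verify that the family $\{T_{f,\,p}\circ T_{g,\,p}\}$ satisfies the three conditions of the symplectic analogue of the characterization Theorem \ref{toet3.1} (cf.\ \cite[Th.\,4.9]{MM08b}), exactly paralleling the K\"ahler proof of Theorem \ref{toet4.1b}. Condition (i), namely $P_p(T_{f,\,p}T_{g,\,p})P_p=T_{f,\,p}T_{g,\,p}$, is immediate from $T_{f,\,p}=P_pfP_p$ and $P_p^2=P_p$. Condition (ii), the off-diagonal decay \eqref{toe3.1}, follows from the symplectic analogue of Lemma \ref{toet2.1} (\cite[Lemma\,4.2]{MM08b}) together with the polynomial bound on $P_p(x,x')$ supplied by the Dai--Liu--Ma expansion. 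For condition (iii) I would localize the composition near the diagonal, writing as in \eqref{toe4.5}
\begin{equation*}
(T_{f,\,p}T_{g,\,p})_{x_0}(Z,Z')=\int_{T_{x_0}X} T_{f,\,p,\,x_0}(Z,Z'')\,\rho(4|Z''|/\var)\,T_{g,\,p,\,x_0}(Z'',Z')\,\kappa_{x_0}(Z'')\,dv_{TX}(Z'')+\cO(p^{-\infty}),
\end{equation*}
then insert the symplectic Toeplitz kernel expansions (the analogue of Theorem \ref{toet2.3}) and rescale via $\sqrt{p}\,Z''=W$. The model calculus of Lemma \ref{toet1.1} and the change of variables \eqref{toe1.15} yield the near-diagonal expansion \eqref{toe4.6}, with coefficients given by \eqref{toe4.7},
\begin{equation*}
Q_{r,\,x_0}(f,g)=\sum_{r_1+r_2=r}\cK\big[Q_{r_1,\,x_0}(f),\,Q_{r_2,\,x_0}(g)\big].
\end{equation*}
By the characterization theorem, $T_{f,\,p}T_{g,\,p}$ is then a Toeplitz operator.

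The coefficients $C_r(f,g)$ are produced by the inductive construction in the proof of Theorem \ref{toet3.1}: one sets $g_0(x_0)=\mQ_{0,\,x_0}(0,0)$ and defines $g_{r+1}$ as the principal symbol of $p^{r+1}(T_{f,\,p}T_{g,\,p}-\sum_{l\le r}p^{-l}T_{g_l,\,p})$. Since each $Q_{r,\,x_0}(f)$ is a universal polynomial combination of the Taylor coefficients of $f$ at $x_0$ (see \eqref{toe2.14}) and likewise for $g$, the resulting $g_r=C_r(f,g)$ are bi-differential operators. The leading term is read off from the model calculus: as in \eqref{toe4.8}, using \eqref{toe1.6} and \eqref{toe2.15},
\begin{equation*}
C_0(f,g)(x)=Q_{0,\,x}(f,g)=\cK[Q_{0,\,x}(f),Q_{0,\,x}(g)]=\cK[f(x),g(x)]=f(x)g(x),
\end{equation*}
so $C_0(f,g)=fg$.

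For the commutation relation I would establish the antisymmetrized identity
\begin{equation*}
C_1(f,g)-C_1(g,f)=\sqrt{-1}\{f,g\}\,\Id_E
\end{equation*}
for $f,g\in\cC^\infty(X)$ directly, rather than by computing each $C_1$ separately (the explicit $C_1$ being considerably more involved in the symplectic case). Tracing the induction, $C_1(f,g)(x_0)$ is extracted from the coefficient $Q_{2,\,x_0}(f,g)$ of the composition after subtracting the contributions of $T_{fg,\,p}$ and of the $p^{-1/2}$ terms; this is assembled from the brackets $\cK[Q_{r_1,\,x_0}(f),Q_{r_2,\,x_0}(g)]$ with $r_1+r_2\le 2$ evaluated on the diagonal $Z=Z'=0$. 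Using the model identities \eqref{toe1.7}, \eqref{toe1.7a1}, \eqref{toe1.11} and \eqref{toe1.12}, the symmetric parts cancel under antisymmetrization in $f,g$, and the surviving contribution is exactly the one generated by $[g(z,\ov{z}),b_{j,z}]=2\partial_{z_j}g$, i.e.\ the antisymmetric first-order term computing $\xi_f(dg)$. Identifying this with the Poisson bracket on $(X,2\pi\om)$ of \eqref{toe4.1a} gives \eqref{toe4.3b}, whence \eqref{toe4.4a} follows from
\begin{equation*}
[T_{f,\,p},T_{g,\,p}]=p^{-1}T_{C_1(f,g)-C_1(g,f),\,p}+\mO(p^{-2}).
\end{equation*}

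The main obstacle is this last step. In the non-K\"ahler setting the Bergman coefficients $J_{r,\,x_0}$, and hence the $Q_{r,\,x_0}(f)$, carry extra terms arising from the spin$^c$ Dirac model and the curvature $R^{\det}$, so the K\"ahler formula for $C_1(f,g)$ cannot simply be quoted. The advantage of working with the difference $C_1(f,g)-C_1(g,f)$ is precisely that all the symmetric, $f,g$-independent geometric contributions drop out under antisymmetrization, leaving only the antisymmetric first-order piece fixed by the model relations \eqref{toe1.7}; checking that this piece equals $\sqrt{-1}\{f,g\}\Id_E$ is the heart of the argument.
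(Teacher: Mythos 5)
Your proposal is correct and follows essentially the same route as the paper: verify the three conditions of the symplectic characterization theorem (the analogue of Theorem \ref{toet3.1}) using the Dai--Liu--Ma kernel expansion, compose the Toeplitz kernel expansions via the model calculus $\cK[\cdot,\cdot]$ to get \eqref{toe4.6}--\eqref{toe4.7}, and obtain \eqref{toe4.3b} by computing the antisymmetrized difference $C_1(f,g)-C_1(g,f)$ directly from the identities \eqref{toe1.12} rather than each $C_1$ separately --- which is precisely the method the paper singles out as the one that carries over to the symplectic case (cf.\ the discussion after \eqref{toe4.16} and the references to \cite[p.\,593--4]{MM08b}). Your closing remark correctly locates the remaining work in checking that the surviving antisymmetric first-order term equals $\sqrt{-1}\{f,g\}\Id_E$.
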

Thus the construction of the Berezin-Toeplitz star-product can be carried out also in the case of symplectic manifolds. Namely, for $f,g\in\cC^\infty(X,\End(E))$ we set
$f*_{\hbar}g:=\sum_{k=0}^\infty C_k(f,g) \hbar^{k}\in\cC^\infty(X,\End(E))[[\hbar]]$,
where $C_{r}(f,g)$ are determined by \eqref{toe4.2}. Then $*_{\hbar}$ is an associative star product.

\providecommand{\bysame}{\leavevmode\hbox to3em{\hrulefill}\thinspace}


\end{document}